\documentclass[11pt, twoside]{article}
\pdfoutput=1

\usepackage[margin=1in]{geometry}

\usepackage{graphicx}
\usepackage{amsmath,amsfonts,amssymb}
\usepackage[mathscr]{eucal}
\usepackage{bm}

\usepackage{amsthm}
\usepackage{cite}
\usepackage{hyperref}

\usepackage{multirow}
\usepackage{makecell}
\usepackage{booktabs}

\usepackage{color}                              

\usepackage[nameinlink]{cleveref}

\numberwithin{equation}{section}

\newtheorem{theorem}{Theorem}[section]
\newtheorem{lemma}[theorem]{Lemma}

\newtheorem{corollary}[theorem]{Corollary}

\theoremstyle{definition}

\newtheorem{assumption}[theorem]{Assumption}
\newtheorem{example}[theorem]{Example}

\theoremstyle{remark}
\newtheorem{remark}[theorem]{Remark}

\usepackage{fancyhdr}
\pagestyle{fancy}
\fancyhead[EC]{G.\ Chen, B.\ Cockburn, J.\ Singler, and Y.\ Zhang}
\fancyhead[OC]{Superconvergent interpolatory HDG methods for reaction diffusion I}
\fancyhead[L,R]{}
\cfoot{\thepage}

\begin{document}

\title {Superconvergent interpolatory HDG methods for reaction diffusion equations I: An HDG$_{k}$ method}
\author{Gang Chen
	\thanks{School of Mathematics Sciences, University of Electronic Science and Technology of China, Chengdu, China, 56961491@qq.com}
	\and
	Bernardo Cockburn\thanks {School of Mathematics, University of Minnesota, Minneapolis, MN, cockburn@math.umn.edu}
	\and
	John~R.~Singler\thanks {Department of Mathematics and Statistics, Missouri University of Science and Technology, Rolla, MO, singlerj@mst.edu}
	\and
	Yangwen Zhang\thanks {Department of Mathematics and Statistics, Missouri University of Science and Technology, Rolla, MO, ywzfg4@mst.edu}
}

\date{}
\maketitle

	\begin{abstract}
		In our earlier work \cite{CockburnSinglerZhang1}, we approximated solutions of a general class of scalar parabolic semilinear PDEs by an interpolatory hybridizable discontinuous Galerkin (Interpolatory HDG) method.  This method reduces the computational cost compared to standard HDG since the HDG matrices are assembled once before the time integration.  Interpolatory HDG also achieves optimal convergence rates; however, we did not observe superconvergence after an element-by-element postprocessing.  In this work, we revisit the Interpolatory HDG method for reaction diffusion problems, and use the postprocessed approximate solution to evaluate the nonlinear term.  We prove this simple change restores the superconvergence and keeps the computational advantages of the Interpolatory HDG method.  We present numerical results to illustrate the convergence theory and the performance of the method.
		
	\end{abstract}
	
	\textbf{Keywords} Interpolatory hybridizable discontinuous Galerkin method, superconvergence

	\section{Introduction}
	In our earlier work \cite{CockburnSinglerZhang1}, we introduced an interpolatory hybridizable discontinuous Galerkin (Interpolatory HDG) method to approximate the solution of semilinear parabolic PDEs.  In contrast to standard HDG, the Interpolatory HDG method uses an elementwise interpolation procedure to approximate the nonlinear term; therefore, all quadrature for the nonlinear term can be performed once before the time integration, which yields a significant computational cost reduction.  The Interpolatory HDG method still converged at optimal rates, but superconvergence using element-by-element postprocessing was lost.  
	
	The superconvergence is an excellent feature of HDG methods, and therefore in this work we modify the Interpolatory HDG method from \cite{CockburnSinglerZhang1} and restore the superconvergence for reaction diffusion PDEs.
	
	Specifically, we consider the following class of scalar reaction diffusion PDEs on a Lipschitz polyhedral domain $\Omega\subset \mathbb R^d $, $ d = 2, 3$, with boundary $\partial\Omega$:
	\begin{equation}\label{semilinear_pde1}
	\begin{split}
	\partial_tu-\Delta u+ F(u)&= f \quad  \mbox{in} \; \Omega\times(0,T],\\
	u&=0 \quad  \mbox{on} \; \partial\Omega\times(0,T],\\
	u(\cdot,0)&=u_0~~\mbox{in}\ \Omega.
	\end{split}
	\end{equation}
	In \Cref{sec:HDG}, we provide background on HDG methods and describe the new Interpolatory HDG approach in detail.  We use the HDG$_k$ method to approximate the linear terms in the equation; i.e., $k$th order discontinuous polynomials are used to approximate the flux $\bm q = -\nabla u$, the scalar variable $u$, and its trace, and the stabilization function is chosen as $O(1)$ piecewise constant.  For the nonlinear term, we again use an elementwise Lagrange interpolation operator, as in \cite{CockburnSinglerZhang1}, but now we also approximate $ u $ using a postprocessing approach.  This modified approximate nonlinearity restores the superconvergence and, as in \cite{CockburnSinglerZhang1}, we have a simple explicit expressions for the nonlinear term and Jacobian matrix, which leads to an efficient and unified implementation.
	
	We analyze the semidiscrete Interpolatory HDG$_k$ method in \Cref{Error_analysis}.  We first assume the nonlinearity satisfies a global Lipschitz condition and prove the superconvergence.  Next, we establish the superconvergence under a local Lipschitz condition, assuming the mesh is quasi-uniform.
	
	In \Cref{sec:numerics}, we illustrate the convergence theory with numerical experiments and also demonstrate the performance of the Interpolatory HDG$_k$ method on a reaction diffusion PDE system.
	
	We note that interpolatory finite element methods for nonlinear PDEs are well-known to have computational advantages and have a long history.  The approach has been given many different names, including finite element methods with interpolated coefficients, product approximation, and the group finite element method.  For more information, see \cite{MR0502033,MR641309,MR798845,MR702221,MR967844,MR731213,MR2752869,MR2273051,MR2112661, MR1030644,MR1172090,MR973559,MR3178584,MR2294957,MR1068202,MR2391691,MR3403707,MR2587427} and the references therein.

\section{Interpolatory HDG$_k$ formulation and implementation}
\label{sec:HDG}
Hybridizable discontinuous Galerkin (HDG) methods were proposed by Cockburn et al.\ in \cite{MR2485455}.  HDG methods work with the mixed formulation of the PDE, and on each element the approximate solution and flux are expressed in terms of the approximate solution trace on the element boundary.  The approximate trace is uniquely determined by requiring the normal component of the numerical trace of the flux to be continuous across element boundaries.  This allows the approximate solution and approximate flux variables to be eliminated locally on each element; the result is a global system of equations for the approximate solution trace only.  Therefore, the number of globally coupled degrees of freedom for HDG methods is significantly lower than for standard DG methods.  HDG methods have been successfully applied to linear PDEs \cite{MR2485455,MR2772094,MR2629996,MR2513831} and nonlinear PDEs \cite{NguyenPeraireCockburnHDGAIAAINS10,PeraireNguyenCockburnHDGAIAACNS10,NguyenPeraireCM12,MoroNguyenPeraireSCL12,MR3626531,NguyenPeraireCockburnEDG15,KabariaLewCockburn15,MR2558780,MR3463051}. 

To describe the Interpolatory HDG$_k$ method, we introduce notation below.  We mostly follow the notation used in \cite{MR2485455}, where HDG methods were considered for linear, steady-state diffusion.

Let $\mathcal{T}_h$ be a collection of disjoint simplexes $K$ that partition $\Omega$.  Let $\partial \mathcal{T}_h$ denote the set $\{\partial K: K\in \mathcal{T}_h\}$. For an element $K$ in the collection  $\mathcal{T}_h$, let $e = \partial K \cap \Gamma$ denote the boundary face of $ K $ if the $d-1$ Lebesgue measure of $e$ is nonzero. For two elements $K^+$ and $K^-$ of the collection $\mathcal{T}_h$, let $e = \partial K^+ \cap \partial K^-$ denote the interior face between $K^+$ and $K^-$ if the $d-1$ Lebesgue measure of $e$ is nonzero. Let $\varepsilon_h^o$ and $\varepsilon_h^{\partial}$ denote the sets of interior and boundary faces, respectively, and let $\varepsilon_h$ denote the union of  $\varepsilon_h^o$ and $\varepsilon_h^{\partial}$. We use the mesh-dependent inner products
\begin{align*}
(w,v)_{\mathcal{T}_h} := \sum_{K\in\mathcal{T}_h} (w,v)_K,   \quad\quad\quad\quad\left\langle \zeta,\rho\right\rangle_{\partial\mathcal{T}_h} := \sum_{K\in\mathcal{T}_h} \left\langle \zeta,\rho\right\rangle_{\partial K},
\end{align*}
where $(\cdot,\cdot)_D$ denotes the $L^2(D)$ inner product for a set $D\subset\mathbb{R}^d$ and $\langle \cdot, \cdot\rangle_{\Gamma} $ denotes the $L^2(\Gamma)$ inner product for a set $\Gamma \subset \mathbb{R}^{d-1}$.

Let $\mathcal{P}^k(D)$ denote the set of polynomials of degree at most $k$ on a domain $D$.  We consider the discontinuous finite element spaces
\begin{align}
\bm{V}_h  &:= \{\bm{v}\in [L^2(\Omega)]^d: \bm{v}|_{K}\in [\mathcal{P}^k(K)]^d, \forall K\in \mathcal{T}_h\},\\
{W}_h  &:= \{{w}\in L^2(\Omega): {w}|_{K}\in \mathcal{P}^{k }(K), \forall K\in \mathcal{T}_h\},\\
{Z}_h  &:= \{{z}\in L^2(\Omega): {z}|_{K}\in \mathcal{P}^{k+1}(K), \forall K\in \mathcal{T}_h\},\\
{M}_h  &:= \{{\mu}\in L^2(\varepsilon_h): {\mu}|_{e}\in \mathcal{P}^k(e), \forall e\in \varepsilon_h,\mu|_{\varepsilon_h^\partial} = 0\}.
\end{align}
All spatial derivatives of functions in these spaces should be understood piecewise on each element $K\in \mathcal T_h$.

We consider the HDG method that approximates the scalar variable $ u $, flux $ \bm q = -\nabla u $, and boundary trace $ \widehat{u} $ using the spaces $ W_h $, $ \bm V_h $, and $ M_h $, respectively; i.e., polynomials of degree $ k $ are used for all variables.  We call this specific method HDG$_k$ to distinguish it from the wide variety of other available HDG methods, see, e.g., \cite{CockburnFuSayasM17, CockburnFuM2D17, CockburnFuM3D17, MR3507267}.  The space $ Z_h $ is used for postprocessing.

For the Interpolatory HDG$_k$ method, we use an elementwise interpolatory procedure along with postprocessing to approximate the nonlinear term.  Let $\mathcal I_h$ be the elementwise interpolation operator with respect to the finite element nodes for the postprocessing space $ Z_h $.  Therefore, for any function $ g $ that is continuous on each element we have $ \mathcal{I}_h g \in Z_h $.

The Interpolatory HDG$_k$ formulation reads: find $(\bm q_h,u_h,\widehat{u}_h)\in \bm V_h\times W_h\times M_h$ such that, for all $(\bm r_h,v_h,\widehat{v}_h)\in \bm V_h\times W_h\times M_h$, we have
\begin{subequations}\label{HDG-O}
	\begin{align}
	(\bm{q}_h,\bm{r}_h)_{\mathcal{T}_h}-(u_h,\nabla\cdot \bm{r}_h)_{\mathcal{T}_h}+\left\langle\widehat{u}_h,\bm r_h\cdot\bm n \right\rangle_{\partial{\mathcal{T}_h}} &= 0, \label{HDG-O_a}\\
	(\partial_t u_h, v_h)_{\mathcal T_h}-(\bm{q}_h,\nabla v_h)_{\mathcal{T}_h}+\left\langle\widehat{\bm{q}}_h\cdot \bm{n},v_h\right\rangle_{\partial{\mathcal{T}_h}} +  ( \mathcal I_h F(u_h^\star),v_h)_{\mathcal{T}_h}&= (f,v_h)_{\mathcal{T}_h},\label{HDG-O_b}\\
	\left\langle\widehat{\bm{q}}_h\cdot \bm{n}, \widehat{v}_h\right\rangle_{\partial{\mathcal{T}_h}\backslash\varepsilon^{\partial}_h} &=0,\label{HDG-O_c}\\
	u_h(0) &= \Pi u_0,\label{HDG-O_d}
	\end{align}
\end{subequations}
where $\Pi$ is a projection mapping into $ W_h $ and the numerical trace for the flux is defined by
\begin{align}\label{num_tra_HDG-O}
\widehat{\bm q}_h\cdot\bm n &= \bm q_h\cdot\bm n+\tau ( u_h -\widehat u_h).
\end{align}
Here, the stabilization function $ \tau $ is nonnegative, constant on each element, and $ O(1) $.  Furthermore, the postprocessed scalar variable $u_h^\star=\mathfrak{q}_h^{k+1}(\bm q_h, u_h) \in Z_h $ is determined on each element $ K $ by 
\begin{subequations}\label{post_process_1}
	\begin{align}
	(\nabla\mathfrak{q}_h^{k+1}(\bm q_h, u_h),\nabla z_h)_K&=-(\bm q_h,\nabla z_h)_K,\label{post_process_1_a}\\
	(\mathfrak{q}_h^{k+1}(\bm q_h, u_h),w_h)_K&=(u_h,w_h)_K,\label{post_process_1_b}
	\end{align}
\end{subequations}
 for all $(z_h, w_h)\in  [\mathcal P^{k+1}(K)]^{\perp}\times\mathcal{P}^{0}(K) $, where 
 \begin{align}
[\mathcal P^{k+1}(K)]^{\perp} = \{z\in \mathcal P^{k+1}(K) : (z,w_0)_K = 0 \ \textup{for all } w_0 \in \mathcal{P}^{0}(K)  \}.
 \end{align}
 
\begin{remark}
	In our original Interpolatory HDG work \cite{CockburnSinglerZhang1}, we used $ \mathcal{I}_h^k F(u_h) $ to approximate the nonlinear term, where $ \mathcal I_h^k $ is the elementwise interpolation operator mapping into $ W_h $.  We proved optimal convergence rates for all variables, but we did not observe superconvergence after an element-by-element postprocessing.  In this work, we approximate the nonlinearity using $ \mathcal I_h $ and postprocessing, i.e., $\mathcal I_h F(u_h^\star)$.  Note that this approximate nonlinearity is in $ Z_h $ instead of $ W_h $ as in our first work.  This simple change yields the superconvergence and keeps all the advantages of the original Interpolatory HDG$_k$ method proposed in \cite{CockburnSinglerZhang1}.  We provide details on the computational advantages of this approach in \Cref{sec:HDG2}.
\end{remark}

\subsection{ Implementation}
\label{sec:HDG2}
In our original work \cite{CockburnSinglerZhang1} on Interpolatory HDG, we provided details of the implementation for the method.  Since we changed the discretization of the nonlinear term in this work, the implementation is different; therefore, we provide details for the implementation of this new formulation and show how all matrices need only be assembled once before the time integration. As in our earlier work \cite{CockburnSinglerZhang1}, we describe the implementation using a simple time discretization approach: backward Euler with a Newton iteration to solve the nonlinear system at each time step.  Using Interpolatory HDG with other time discretization approaches is also possible.

Let  $N$ be a positive integer and define the time step $\Delta t = T/N$. We denote the approximation of $(\bm q_h(t),u_h(t),\widehat u_h(t))$ by $(\bm q^n_h,u^n_h,\widehat u^n_h)$ at the discrete time $t_n = n\Delta t $, for $n = 0,1,2,\ldots,N$. We replace the time derivative $\partial_tu_h$ in \eqref{HDG-O} by the backward Euler difference quotient 
\begin{align}
\partial^+_tu^n_h = \frac{u^n_h-u^{n-1}_h}{\Delta t}.\label{backward_Euler}
\end{align}
This gives the following fully discrete method: find $(\bm q^n_h,u^n_h,\widehat u^n_h)\in \bm V_h\times W_h\times M_h$ satisfying
\begin{subequations}\label{full_discretion_standard}
	\begin{align}
	(\bm{q}^n_h,\bm{r})_{\mathcal{T}_h}-(u^n_h,\nabla\cdot \bm{r})_{\mathcal{T}_h}+\left\langle\widehat{u}^n_h,\bm{r\cdot n} \right\rangle_{\partial{\mathcal{T}_h}} &= 0,\label{full_discretion_standard_a} \\
	(\partial^+_tu^n_h,w)_{\mathcal T_h}-(\bm{q}^n_h,\nabla w)_{\mathcal{T}_h}+\left\langle\widehat{\bm{q}}^n_h\cdot \bm{n},w\right\rangle_{\partial{\mathcal{T}_h}} + ( \mathcal I_hF( u_h^{n \star}),w)_{\mathcal{T}_h}&= (f^n,w)_{\mathcal{T}_h},\label{full_discretion_standard_b}\\
	\left\langle\widehat{\bm{q}}^n_h\cdot \bm{n}, \mu\right\rangle_{\partial{\mathcal{T}_h}\backslash\varepsilon^{\partial}_h} &=0,\label{full_discretion_standard_c}\\
	u^0_h &=\Pi  u_0,\label{full_discretion_standard_d}
	\end{align}
\end{subequations}
for all $(\bm r,w,\mu)\in \bm V_h\times W_h\times M_h$ and $n=1,2,\ldots,N$.  In \eqref{full_discretion_standard}, $f^n=f(t_n,\cdot)$, the numerical trace for the flux on $\partial\mathcal T_h$ is defined by
\begin{align}\label{num_tr_s}
\widehat{\bm q}_h^n\cdot\bm n=\bm q_h^n\cdot\bm n+\tau(u_h^n-\widehat u_h^n),
\end{align}	
and the postprocessed approximate solution $u_h^{n\star}$ is determined on each element $ K $ by solving
\begin{subequations}\label{post_process_1i}
	\begin{align}
	(\nabla u_h^{n\star},\nabla z_h)_K&=-(\bm q_h^n,\nabla z_h)_K,\label{post_process_1_ai}\\
	(u_h^{n\star},w_h)_K&=(u_h^n,w_h)_K,\label{post_process_1_bi}
	\end{align}
\end{subequations}
for all $(z_h, w_h)\in [\mathcal P^{k+1}(K)]^{\perp}\times\mathcal{P}^{0}(K) $. 

As is discussed below, the Interpolatory HDG$_k$ method takes great advantage of nodal basis functions; however, the postprocessing \eqref{post_process_1i} uses an orthogonal complement space, which complicates the implementation. To avoid this, on each element $ K $, we introduce a Lagrange multiplier $\eta_h^n\in \mathcal P^0(K)$ such that 
\begin{subequations}\label{post_process_2i}
	\begin{align}
	(\nabla u_h^{n\star},\nabla z_h)_K + (\eta_h^n,z_h)_K&=-(\bm q_h^n,\nabla z_h)_K,\label{post_process_2_ai}\\
	(u_h^{n\star},w_h)_K&=(u_h^n,w_h)_K,\label{post_process_2_bi}
	\end{align}
\end{subequations}
holds for all $(z_h, w_h)\in \mathcal P^{k+1}(K)\times\mathcal{P}^{0}(K) $. 
\begin{remark}
	In this work,  we used $w_h\in\mathcal P^{\ell}(K)$ with $\ell = 0$ in \eqref{post_process_2i}.  Actually, $\ell = 0,1, \ldots, k-1$ works both in the analysis and the numerical experiments.  In Part II of this work, we use the same postprocessing \eqref{post_process_2i} with $\ell = k$.
\end{remark}

Assume $\bm{V}_h = \mbox{span}\{\bm \varphi_i\}_{i=1}^{N_1}$, $W_h=\mbox{span}\{\phi_i\}_{i=1}^{N_2}$, $Z_h=\mbox{span}\{\chi_i\}_{i=1}^{N_3}$, and $M_h=\mbox{span}\{\psi_i\}_{i=1}^{N_4}$. Then
\begin{equation}\label{expre}
\begin{split}
\bm q^{n}_{h}= \sum_{j=1}^{N_1}\alpha_{j}^{n}\bm\varphi_j, \qquad 
u^{n}_h= \sum_{j=1}^{N_2} \beta_{j}^{n}\phi_j, \\
u^{n \star}_h= \sum_{j=1}^{N_3}\gamma_{j}^{n}\chi_j, \qquad 
\widehat{u}^{n}_h= \sum_{j=1}^{N_4}\zeta_{j}^{n}\psi_{j}.
\end{split}
\end{equation}
Also, define the following matrices
\begin{align*}
A_1 &= [(\nabla\chi_j,\nabla\chi_i )_{\mathcal{T}_h}], & A_2 &= [(\bm{\varphi}_j,\nabla\chi_i)_{\mathcal{T}_h}], & A_3 &= [(\bm\varphi_j,\bm\varphi_i )_{\mathcal{T}_h}],\\
A_4 &= [(\phi_j,\nabla\cdot\bm{\varphi}_i)_{\mathcal{T}_h}], & A_5 &= [\left\langle\psi_j,{\bm\varphi_i\cdot\bm n}\right\rangle_{\partial\mathcal{T}_h}], & A_6 &= [\left\langle\tau\phi_j,{\phi_i}\right\rangle_{\partial\mathcal{T}_h}],\\
A_7 &= [\left\langle\tau\psi_j,{\varphi_i}\right\rangle_{\partial\mathcal{T}_h}], & A_8 &= [\left\langle\tau\psi_j,{\psi_i}\right\rangle_{\partial\mathcal{T}_h}], & A_9 &= [(\chi_j,\phi_i)_{\mathcal{T}_h}],\\
  &  & M &= [(\phi_j,\phi_i )_{\mathcal{T}_h}], &  &
\end{align*}
and vectors
\begin{align*}
b_1 &= [(\chi_j,1 )_{\mathcal{T}_h}], &  b_2 &= [(\phi_j,1 )_{\mathcal{T}_h}], & b_3^n &= [(f^n,\phi_i )_{\mathcal{T}_h}].
\end{align*}
Since $ \bm V_h $, $ W_h $, and $ Z_h $ are discontinuous finite element spaces, many of the matrices are block diagonal with small blocks.

Substitute \eqref{expre} into the postprocessing equation \eqref{post_process_2i} and use the corresponding test functions to test \eqref{post_process_2i} on each element $K\in \mathcal T_h$. This gives the following local postprocessing equation
\begin{align*}
{\begin{bmatrix}
	A_1^k & (b_1^k)^T\\
	b_1^k & 0
	\end{bmatrix}}
{\left[ {\begin{array}{*{20}{c}}
		\bm\gamma^{n}_k\\
		\bm\eta^{n}_k\\
		\end{array}} \right]}
={\begin{bmatrix}
	-A_2^k & 0\\
	0& b_2^k
	\end{bmatrix}}
{\left[ {\begin{array}{*{20}{c}}
		\bm\alpha^{n}_k\\
		\bm\beta^{n}_k
		\end{array}} \right]},
\end{align*}
were $A_1^k$ is the $k$th block of the matrix $A_1$, and $A_2^k, b_1^k$, and $b_2^k$ are defined similarly.  That is,
\begin{align}
{\left[ {\begin{array}{*{20}{c}}
		\bm\gamma^{n}_k\\
		\bm\eta^{n}_k\\
		\end{array}} \right]}
&={\begin{bmatrix}
	A_1^k & (b_1^k)^T\\
	b_1^k & 0
	\end{bmatrix}}^{-1}
{\begin{bmatrix}
	-A_2^k & 0\\
	0& b_2^k
	\end{bmatrix}}
{\left[ {\begin{array}{*{20}{c}}
		\bm\alpha^{n}_k\\
		\bm\beta^{n}_k
		\end{array}} \right]}\nonumber\\
	& = {\begin{bmatrix}
		B_{11}^k & B_{12}^k\\
		B_{21}^k& B_{22}^k
		\end{bmatrix}}
	{\left[ {\begin{array}{*{20}{c}}
			\bm\alpha^{n}_k\\
			\bm\beta^{n}_k
			\end{array}} \right]},\label{post_process}
\end{align}
i.e.,
\begin{align*}
\bm \gamma^n_k = B_{11}^k \bm\alpha^{n}_k+  B_{12}^k \bm\beta^{n}_k.
\end{align*}
Let $B_{11}$ and $B_{12}$ be the block diagonal matrices with $k$th blocks $B_{11}^k$ and $B_{12}^k$, respectively.

As in \cite{CockburnSinglerZhang1}, once we test \eqref{full_discretion_standard_b} using $ w = \phi_i $ we can express the Interpolatory HDG$_k$ nonlinear term by the matrix-vector product
\begin{align*}
[ ( \mathcal I_h F(u_h^{n \star}),\phi_i)_{\mathcal{T}_h} ] &= A_9 \mathcal F(\bm\gamma^{n})\\
& = A_9 \mathcal F(B_{11} \bm\alpha^{n}+  B_{12} \bm\beta^{n}),
\end{align*}
where $\mathcal F$ is defined by
\begin{align}
\mathcal F(\bm\gamma^{n}) &= [F(\gamma_1^{n}), F(\gamma_2^{n}),\ldots,F(\gamma_{N_3}^{n})]^T.
\end{align}

Then the system \eqref{full_discretion_standard} can be rewritten as
\begin{align}\label{system_equation_group3}
\underbrace{\begin{bmatrix}
	A_3 & -A_4 &A_5\\
	A_4^T & \Delta t^{-1} M+A_6 &-A_7\\
	A_5^T & A_7^T &-A_8
	\end{bmatrix}}_{K}
\underbrace{\left[ {\begin{array}{*{20}{c}}
		\bm\alpha^{n}\\
		\bm\beta^{n}\\
		\bm\zeta^{n}\\
		\end{array}} \right]}_{\bm x_{n}}+
\underbrace{\left[ {\begin{array}{*{20}{c}}
		0\\
		A_9 \mathcal F(B_{11} \bm\alpha^{n}+  B_{12} \bm\beta^{n})\\
		0
		\end{array}} \right]}_{\mathscr F(\bm x_{n})}
=\underbrace{\left[ {\begin{array}{*{20}{c}}
		0\\
		b_3^n+{\Delta t}^{-1}M\bm\beta^{n-1} \\
		0
		\end{array}} \right]}_{\bm b_n},
\end{align}
i.e., 
\begin{align}\label{system_equation_group4}
K\bm x_n + \mathscr F(\bm x_n) = \bm b_n.
\end{align}

To apply Newton's method to solve the nonlinear equations \eqref{system_equation_group4}, define $G:\mathbb R^{N_1+N_2+N_4}\to \mathbb R^{N_1+N_2+N_4}$ by
\begin{align}\label{system_equation_group5}
G(\bm x_n ) = K\bm x_n + \mathscr F(\bm x_n) - \bm b_n.
\end{align}
At each time step $t_n $ for $ 1\le n\le N$, given an initial guess $\bm x_n^{(0)}$, Newton's method yields
\begin{align}\label{system_equation_group6}
\bm x_n^{(m)} =\bm x_n^{(m-1)} - \left[G'(\bm x_n^{(m-1)})\right]^{-1}G(\bm x_n^{(m-1)}),  \quad m=1,2,3,\ldots
\end{align}
where the Jacobian matrix $G'(\bm x_n^{(m-1)})$ is given by
\begin{align}\label{system_equation_group7}
G'(\bm x_n^{(m-1)}) = K+\mathscr F'(\bm x_n^{(m-1)}).
\end{align}
Similar to our earlier work \cite{CockburnSinglerZhang1} on Interpolatory HDG, the term $\mathscr F'(\bm x_n^{(m-1)})$ is easily computed by
\begin{align*}
\mathscr F'(\bm x_n^{(m-1)}) = \begin{bmatrix}
0 & 0 &0  \\
 A_{10}^{n,(m)}&A_{11}^{n,(m)}&0\\
0 &0  & 0 
\end{bmatrix},
\end{align*}
where $ A_{10}^{n,(m)} $ and $ A_{11}^{n,(m)} $ can be efficiently computed using sparse matrix operations by
\begin{align*}
A_{10}^{n,(m)} &= A_9 \, \text{diag}(\mathcal F'(B_{11} \bm\alpha^{n,(m-1)}+  B_{12} \bm\beta^{n,(m-1)})) B_{11},\\
A_{11}^{n,(m)} &= A_9 \, \text{diag}(\mathcal F'(B_{11} \bm\alpha^{n,(m-1)}+  B_{12} \bm\beta^{n,(m-1)})) B_{12}.
\end{align*}

Therefore, equation \eqref{system_equation_group6} can be rewritten as
\begin{align}\label{system_equation_group1}
\begin{bmatrix}
A_3 & -A_4 &A_5\\
A_4^T + A_{10}^{n,(m)} & \Delta t^{-1} M+A_6 + A_{11}^{n,(m)} &-A_7\\
A_5^T & A_7^T &-A_8
\end{bmatrix}
\left[ {\begin{array}{*{20}{c}}
	\bm\alpha^{n,(m)}\\
	\bm\beta^{n,(m)}\\
	\bm\zeta^{n,(m)}
	\end{array}} \right]
=\bm {\widetilde  b},
\end{align}
where 
\begin{align}
\bm {\widetilde  b} =  G'(\bm x_n^{(m-1)}) \bm x_n^{(m-1)} - G(\bm x_n^{(m-1)}).
\end{align}
This equation can be solved by locally eliminating the unknowns $\bm\alpha^{n,(m)}$ and $\bm\beta^{n,(m)}$; see \cite{CockburnSinglerZhang1} for details.

\begin{remark}
	In this new Interpolatory HDG$_k$ formulation, we only need to assemble the HDG matrices and the HDG postprocessing matrices $B_{11}$ and $B_{12}$ once before the time integration.  Hence, we keep all the advantages from our earlier work \cite{CockburnSinglerZhang1}: the new approach eliminates the computational cost of matrix reassembly and gives simple explicit expressions for the nonlinear term and Jacobian matrix, which leads to a simple unified implementation for a variety of nonlinear PDEs.
\end{remark}

\section{Error analysis}
\label{Error_analysis}

In this section, we give a rigorous error analysis for the semidiscrete Interpolatory HDG$_k$ method.  Below, we state our assumptions and briefly outline the main results.  Then we provide an overview of the projections required for the analysis in \Cref{subsec:basic_projections}.  The proofs of the main results follow.  We first assume in \Cref{subsec:analysis_global_Lip} that the nonlinearity satisfies a global Lipschitz condition.  Finally, in \Cref{local} we extend the results to locally Lipschitz nonlinearities; however, we assume the mesh is quasi-uniform and $ h $ is sufficiently small for this case.

We use the standard notation $W^{m,p}(\Omega)$ for Sobolev spaces on $\Omega$ with norm $\|\cdot\|_{m,p,\Omega}$ and seminorm $|\cdot|_{m,p,\Omega}$.  We also write $H^{m}(\Omega)$ instead of $W^{m,2}(\Omega)$, and we omit the index $p$ in the corresponding norms and seminorms.

Throughout, we assume the solution of the PDE \eqref{semilinear_pde1} exists and is unique for $ t \in [0,T] $, the function $F$, the problem data, and the solution of the PDE are smooth enough, and the semidiscrete Interpolatory HDG$_k$ equations \eqref{HDG-O} have a unique solution on $ [0,T] $.  Furthermore, we assume the mesh is uniformly shape regular, $ h \leq 1 $, and the projection $ \Pi $ used for the initial condition in \eqref{HDG-O_d} is $ \Pi = \Pi_W $, where $ \Pi_W $ is defined below in \Cref{subsec:basic_projections}.

We also make the following regularity assumption on the dual problem: there exists a constant $ C $ such that for any $\Theta \in L^2(\Omega)$, the solution $ (\bm \Phi, \Psi) $ of the dual problem
\begin{equation}\label{Dual_PDE1_assumption}
\begin{split}
\bm{\Phi}+\nabla\Psi&=0\qquad\qquad\text{in}\ \Omega,\\
\nabla\cdot\bm \Phi   &=\Theta\qquad\quad~~\text{in}\ \Omega,\\
\Psi &= 0\qquad\qquad\text{on}\ \partial\Omega,
\end{split}
\end{equation}
satisfies $ (\bm \Phi, \Psi) \in [H^1(\Omega)]^d \times H^2(\Omega) $ and 
\begin{align}\label{regularity_PDE_assumption}
\|\bm \Phi\|_{H^{1}(\Omega)} + \|\Psi\|_{H^{2}(\Omega)} \le C   \|\Theta\|_{L^{2}(\Omega)}.
\end{align}
This assumption is satisfied if $ \Omega $ is convex.

We show for all $0\le t\le T$ the solution $ (\bm q_h, u_h, u_h^\star) $ of the semidiscrete Interpolatory HDG$_k$ equations \eqref{HDG-O} satisfies
\begin{align*}
\|\bm q(t) - \bm q_h(t)\|_{\mathcal T_h}&\le C h^{k+1}, \
\|u(t) - u_h(t)\|_{\mathcal T_h}\le C h^{k+1}, \ 
\|u(t) - u_h^\star(t)\|_{\mathcal T_h}\le C h^{k+1+\min\{k,1\}}.
\end{align*}
In our error estimates, the constants $ C $ can vary from line to line and may depend on the exact solution and the final time $ T $.  As in the linear case \cite{ChabaudCockburn12}, superconvergence is only obtained for $ k \geq 1 $.

\begin{remark}
	In \cite{ChabaudCockburn12}, the $ L^\infty(L^2) $ error for $ u - u_h^* $ superconverges at a rate of $ \sqrt{ \log \kappa } \,\, h^{k+2} $, where $ \kappa $ depends on the mesh and the term $ \sqrt{ \log \kappa } $ grows very slowly as $ h $ tends to zero.  The term $ \sqrt{ \log \kappa } $ results from the parabolic duality argument used in \cite{ChabaudCockburn12}.  It appears this parabolic duality argument is not applicable to Interpolatory HDG.  Therefore, in this work we use a duality argument based on Wheeler's work \cite{MR0351124} and avoid the term $ \sqrt{ \log \kappa } $ in our error estimates; however, we require the solution has higher regularity than the regularity needed in \cite{ChabaudCockburn12} for the linear case.
\end{remark}

\subsection{Projections and basic estimates}
\label{subsec:basic_projections}

We first introduce the HDG$_k$ projection operator $\Pi_h(\bm{q},u) := (\bm{\Pi}_{V} \bm{q},\Pi_{W}u)$ defined in \cite{MR2629996}, where
$\bm{\Pi}_{V} \bm{q}$ and $\Pi_{W}u$ denote components of the projection of $\bm{q}$ and $u$ into $\bm{V}_h$ and $W_h$, respectively.
For each element $K\in\mathcal T_h$, the projection  is determined by the equations
\begin{subequations}\label{HDG_projection_operator}
	\begin{align}
	(\bm\Pi_V\bm q,\bm r)_K &= (\bm q,\bm r)_K,\qquad\qquad \forall \bm r\in[\mathcal P_{k-1}(K)]^d,\label{projection_operator_1}\\
	(\Pi_Wu, w)_K &= (u, w)_K,\qquad\qquad \forall  w\in \mathcal P_{k-1}(K	),\label{projection_operator_2}\\
	\langle\bm\Pi_V\bm q\cdot\bm n+\tau\Pi_Wu,\mu\rangle_{e} &= \langle\bm q\cdot\bm n+\tau u,\mu\rangle_{e},~\;\forall \mu\in \mathcal P_{k}(e),\label{projection_operator_3}
	\end{align}
\end{subequations}
for all faces $e$ of the simplex $K$.   The approximation properties of the HDG$_k$ projection \eqref{HDG_projection_operator} are given in the following result from \cite{MR2629996}:
\begin{lemma}\label{pro_error}
	Suppose $k\ge 0$, $\tau|_{\partial K}$ is nonnegative and $\tau_K^{\max}:=\max\tau|_{\partial K}>0$. Then the system \eqref{HDG_projection_operator} is uniquely solvable for $\bm{\Pi}_V\bm{q}$ and $\Pi_W u$. Furthermore, there is a constant $C$ independent of $K$ and $\tau$ such that
	\begin{subequations}
		\begin{align}
		\|{\bm{\Pi}_V}\bm{q}-q\|_K &\leq Ch_{K}^{\ell_{\bm{q}}+1}|\bm{q}|_{\ell_{\bm{q}}+1,K}+Ch_{K}^{\ell_{{u}}+1}\tau_{K}^{*}{|u|}_{\ell_{{u}}+1,K},\label{Proerr_q}\\
		\|{{\Pi}_W}{u}-u\|_K &\leq Ch_{K}^{\ell_{{u}}+1}|{u}|_{\ell_{{u}}+1,K}+C\frac{h_{K}^{\ell_{{\bm{q}}}+1}}{\tau_K^{\max}}{|\nabla\cdot \bm{q}|}_{\ell_{\bm{q}},K}\label{Proerr_u}
		\end{align}
	\end{subequations}
	for $\ell_{\bm{q}},\ell_{u}$ in $[0,k]$. Here $\tau_K^{*}:=\max\tau|_{{\partial K}\backslash e^{*}}$, where $e^{*}$ is a face of  $K$ at which $\tau|_{\partial K}$ is maximum.
\end{lemma}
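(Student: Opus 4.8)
The plan is to prove the lemma one element at a time, first establishing that the local system \eqref{HDG_projection_operator} is uniquely solvable and then extracting the rates by comparison with the $L^2$ projection together with a scaling argument. Throughout I would let $P_V$ and $P_W$ denote the local $L^2$-orthogonal projections onto $[\mathcal P_k(K)]^d$ and $\mathcal P_k(K)$, whose approximation properties are classical.

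For unisolvency, I would first note that the system is square: by Pascal's identity $\dim \mathcal P_k(K) = \dim \mathcal P_{k-1}(K) + \dim \mathcal P_k(e)$, the number of unknowns $(d+1)\dim\mathcal P_k(K)$ matches the total number of conditions in \eqref{projection_operator_1}--\eqref{projection_operator_3}. It then suffices to show the homogeneous problem (with $\bm q = 0$, $u=0$) admits only the trivial solution. Here the key step is to test \eqref{projection_operator_1} with $\bm r = \nabla(\Pi_W u) \in [\mathcal P_{k-1}(K)]^d$, integrate by parts, and use \eqref{projection_operator_2} (noting $\nabla\cdot\bm\Pi_V\bm q \in \mathcal P_{k-1}(K)$) to eliminate the volume term; this leaves $\langle \bm\Pi_V\bm q\cdot\bm n,\Pi_W u\rangle_{\partial K}=0$, and inserting the homogeneous form of \eqref{projection_operator_3} gives $\langle \tau\,\Pi_W u,\Pi_W u\rangle_{\partial K}=0$. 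Since $\tau \ge 0$ and $\tau_K^{\max}>0$, this forces $\Pi_W u$ to vanish on the faces where $\tau>0$, and a polynomial trace argument together with \eqref{projection_operator_3} then propagates this to $\Pi_W u = 0$ and $\bm\Pi_V\bm q=0$ throughout $K$.

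For the estimates, I would decompose $\bm\Pi_V\bm q - \bm q = \bm\theta + (P_V\bm q - \bm q)$ and $\Pi_W u - u = \vartheta + (P_W u - u)$, where $\bm\theta := \bm\Pi_V\bm q - P_V\bm q$ and $\vartheta := \Pi_W u - P_W u$ are polynomials. The $L^2$-projection errors are bounded directly by $Ch_K^{\ell_{\bm q}+1}|\bm q|_{\ell_{\bm q}+1,K}$ and $Ch_K^{\ell_u+1}|u|_{\ell_u+1,K}$. Subtracting the $L^2$-projection relations from \eqref{HDG_projection_operator} shows that $(\bm\theta,\vartheta)$ solves the homogeneous interior equations against $[\mathcal P_{k-1}(K)]^d$ and $\mathcal P_{k-1}(K)$, with boundary data $\langle \bm\theta\cdot\bm n + \tau\vartheta,\mu\rangle_e = \langle (\bm q - P_V\bm q)\cdot\bm n + \tau(u - P_W u),\mu\rangle_e$. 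Because the homogeneous problem is unisolvent, on the reference element the solution operator from this boundary data to $(\bm\theta,\vartheta)$ is bounded by equivalence of norms on a finite-dimensional space; transforming to the reference element, applying this bound, and scaling back — while carefully tracking the powers of $h_K$ and the weights of $\tau$ — converts the boundary data into the stated element norms. For \eqref{Proerr_q} this yields the $h_K^{\ell_{\bm q}+1}|\bm q|_{\ell_{\bm q}+1,K}$ term from the flux data and the $h_K^{\ell_u+1}\tau_K^*|u|_{\ell_u+1,K}$ term from the $\tau$-weighted scalar data; for \eqref{Proerr_u}, using the divergence structure (so that $\bm q$ enters only through $\nabla\cdot\bm q$) and the fact that larger $\tau$ better determines $\Pi_W u$ produces the $\tfrac{h_K^{\ell_{\bm q}+1}}{\tau_K^{\max}}|\nabla\cdot\bm q|_{\ell_{\bm q},K}$ term.

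I expect the main obstacle to be the careful bookkeeping of the $\tau$-dependence. The estimates demand the sharp weights $\tau_K^*$ in \eqref{Proerr_q} and $1/\tau_K^{\max}$ in \eqref{Proerr_u} rather than a crude $\max\tau$ everywhere, which means the boundary contributions must be handled anisotropically across faces, isolating the distinguished face $e^*$ at which $\tau$ is maximal. Keeping these face-by-face weights correct through the scaling argument, and ensuring the constant $C$ stays independent of both $K$ and $\tau$, is the technical heart of the proof.
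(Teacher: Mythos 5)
First, a point of comparison: the paper does not prove this lemma at all --- it is quoted verbatim from \cite{MR2629996} (Cockburn--Gopalakrishnan--Sayas), so there is no in-paper proof to measure your argument against. Your outline is, in structure, the strategy of that reference: dimension count plus an energy identity for unisolvency, then a splitting through the $L^2$ projections and a scaling argument for the rates. The energy step is correct as written: testing \eqref{projection_operator_1} with $\nabla(\Pi_W u)\in[\mathcal P_{k-1}(K)]^d$, integrating by parts, using \eqref{projection_operator_2} against $\nabla\cdot\bm\Pi_V\bm q\in\mathcal P_{k-1}(K)$, and inserting \eqref{projection_operator_3} with $\mu=\Pi_W u|_e$ does yield $\langle\tau\,\Pi_W u,\Pi_W u\rangle_{\partial K}=0$. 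But the sentence ``a polynomial trace argument \ldots then propagates this to $\Pi_W u=0$ and $\bm\Pi_V\bm q=0$'' is where the actual work lives: you need the factorization $\Pi_W u=\lambda_{e^*}w$ with $w\in\mathcal P_{k-1}(K)$ and the positivity of $\lambda_{e^*}$ in the interior to kill $\Pi_W u$, and then a separate argument that a field in $[\mathcal P_k(K)]^d$ with vanishing normal trace on all faces and vanishing moments against $[\mathcal P_{k-1}(K)]^d$ is zero. Neither is automatic, and the flux subsystem alone is overdetermined (the count only closes because of the coupling to $\Pi_W u$), so ``propagates'' conceals a genuine step.

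The more serious gap is in the error estimates. You bound $(\bm\theta,\vartheta)$ by invoking ``equivalence of norms on a finite-dimensional space'' to control the solution operator from the boundary residual. That argument cannot produce a constant independent of $\tau$: the local linear system is parametrized by $\tau\in[0,\infty)^{d+1}$ and degenerates as $\tau$ tends to $0$ or $\infty$ on individual faces, so a generic compactness or norm-equivalence argument on the reference element yields a constant depending on $\tau$ --- exactly what the lemma forbids. The sharp weights $\tau_K^{*}$ in \eqref{Proerr_q} (which vanishes when $\tau$ is supported on a single face) and $1/\tau_K^{\max}$ in \eqref{Proerr_u} are obtained in \cite{MR2629996} by an explicit, face-by-face decomposition of the projection that isolates $e^*$ and tracks how each boundary equation enters; they do not fall out of the abstract scaling argument you describe. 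You correctly identify this bookkeeping as ``the technical heart of the proof,'' but identifying it is not the same as doing it --- as written, the proposal asserts the conclusion of the hardest step rather than supplying it. Given that the paper itself simply cites the result, the practical fix is either to do the same or to reproduce the face-by-face estimates of the reference in full.
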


Next, for each simplex $ K $ in $ \mathcal T_h $ and each boundary face $ e $ of $ K $, let $ \Pi_\ell $ (for any $ \ell \geq 0 $) and $ P_M $ denote the standard $L^2$ orthogonal projection operators $\Pi_{\ell}: L^2(K)\to \mathcal P^{\ell}(K)$ and $P_M : L^2(e)\to \mathcal P^{k}(e)$ satisfying
\begin{subequations}
	\begin{align}
	(\Pi_{\ell} u, v_h)_K &= (u,v_h)_K,\quad \forall v_h\in \mathcal P^{\ell}(K),\label{L2_do}\\
	\langle P_M  u, \widehat v_h \rangle_e &= \langle u, \widehat v_h \rangle_e,\quad \forall  \widehat v_h\in \mathcal P^{k}(e).\label{L2_edge}
	\end{align}
\end{subequations}
The following error estimates for the $ L^2 $ projections and the elementwise interpolation operator $ \mathcal I_h $ from \Cref{sec:HDG} are standard and can be found in \cite{MR2373954}:
\begin{lemma}\label{lemmainter}
	Suppose $k, \ell \ge 0$. There exists a constant $C$ independent of $K\in\mathcal T_h$ such that
	\begin{subequations}
		\begin{align}
		&\|w - \mathcal  I_h w\|_K + h_K\|\nabla(w - \mathcal  I_h w)\|_K \le Ch^{k+2} \|w\|_{k+2,K},  &  &\forall w\in C(\bar K)\cap H^{k+2}(K), \label{lemmainter_inter}\\
		&\|w - \Pi_{\ell}  w\|_K \le Ch^{\ell+1} \|w\|_{\ell+1,K},  &  &\forall w\in H^{\ell+1}(K), \label{lemmainter_orthoo}\\
		&\|w- P_M w\|_{\partial K} \le Ch^{k+1/2} \|w\|_{k+1,K}  &  &\forall w\in H^{k+1}(K). \label{lemmainter_orthoe}
		\end{align}
	\end{subequations}
\end{lemma}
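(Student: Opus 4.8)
The plan is to reduce all three estimates to a fixed reference simplex $\hat K$, combine polynomial reproduction with the Bramble--Hilbert lemma, and then scale back using the shape regularity of $\mathcal T_h$. Let $F_K : \hat K \to K$ be the affine map associated to $K$; under the pullback the $L^2$ norm and the $H^m$ seminorms on $K$ and $\hat K$ are related by fixed powers of $h_K$, with constants depending only on the shape-regularity parameter. Since $h_K \le h$, it suffices to track these powers of $h_K$ and bound seminorms by the full norm at the end.

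For the interpolation bound \eqref{lemmainter_inter}, recall that $\mathcal I_h$ maps into $Z_h|_K = \mathcal P^{k+1}(K)$ and reproduces every polynomial in $\mathcal P^{k+1}(K)$. On $\hat K$ the corresponding nodal interpolation operator $\hat{\mathcal I}$ is bounded from $H^{k+2}(\hat K)$ into $H^1(\hat K)$; here I would invoke the continuous embedding $H^{k+2}(\hat K)\hookrightarrow C(\bar{\hat K})$, valid since $k+2 > d/2$ for $d\le 3$, to make the point values defining $\hat{\mathcal I}$ meaningful. Because $\hat{\mathcal I}$ fixes $\mathcal P^{k+1}(\hat K)$, the Bramble--Hilbert lemma gives $\|\hat w - \hat{\mathcal I}\hat w\|_{H^1(\hat K)} \le C\,|\hat w|_{k+2,\hat K}$. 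Pulling this back to $K$ and tracking the $h_K$ powers from the change of variables (the gradient term picks up one extra $h_K^{-1}$, which is exactly why the estimate is stated with the weight $h_K\|\nabla(w-\mathcal I_h w)\|_K$) yields \eqref{lemmainter_inter}.

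The volume projection bound \eqref{lemmainter_orthoo} is the same argument but simpler: $\Pi_\ell$ reproduces $\mathcal P^\ell(K)$ and is an $L^2$-contraction, so no Sobolev embedding is needed, and Bramble--Hilbert on $\hat K$ together with scaling produces $h^{\ell+1}|w|_{\ell+1,K}$. For the boundary bound \eqref{lemmainter_orthoe} I would first use that $P_M$ is an $L^2(e)$-contraction reproducing $\mathcal P^k(e)$, so for any $p\in\mathcal P^k(K)$ one has $\|w - P_M w\|_{\partial K} \le C\|w - p\|_{\partial K}$; then I apply the scaled multiplicative trace inequality $\|v\|_{\partial K}^2 \le C\bigl(h_K^{-1}\|v\|_K^2 + h_K\|\nabla v\|_K^2\bigr)$ with $v = w-p$, and finally choose $p$ (for instance via \eqref{lemmainter_orthoo} with $\ell = k$, or an averaged Taylor polynomial) so that both $\|w-p\|_K$ and $h_K\|\nabla(w-p)\|_K$ are $O(h^{k+1}\|w\|_{k+1,K})$. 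The two trace weights then combine to give the stated $h^{k+1/2}$ rate.

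None of these steps is deep, so there is no genuine obstacle; the two points requiring care are (i) the well-posedness and $H^{k+2}$-boundedness of the nodal operator $\mathcal I_h$, which rests on the embedding $H^{k+2}\hookrightarrow C(\bar K)$ and hence on $d\le 3$, matching the hypothesis $w\in C(\bar K)\cap H^{k+2}(K)$; and (ii) the extra half power of $h_K$ in \eqref{lemmainter_orthoe}, which is an artifact of the trace scaling rather than any loss of approximation order.
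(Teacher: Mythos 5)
Your proof is correct and is exactly the standard affine-scaling/Bramble--Hilbert argument; the paper itself gives no proof of this lemma, simply citing it as standard from Brenner--Scott, which is precisely where your argument (polynomial reproduction, the Sobolev embedding $H^{k+2}(\hat K)\hookrightarrow C(\bar{\hat K})$ for $d\le 3$, and the scaled trace inequality for the $h^{k+1/2}$ face estimate) is carried out. Nothing further is needed.
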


\subsection{Error analysis under a global Lipschitz condition}
\label{subsec:analysis_global_Lip}
In this section, we assume the nonlinearity $F$ is globally Lipschitz:
\begin{assumption}\label{gloablly_lip}
 There is a constant $L>0$ such that 
\begin{align*}
|F(u) - F(v)|_{\mathbb R}\le L |u-v|_{\mathbb R}
\end{align*}
for all  $ u, v \in \mathbb{R}$. 
\end{assumption}
We remove this restriction in the next section. Our proof relies on techniques used in \cite{CockburnSinglerZhang1,ChabaudCockburn12}.  We split the proof of the main result into several steps.

To begin, we first rewrite the semidiscrete interpolatory HDG equations \eqref{HDG-O}.  First, subtract \eqref{HDG-O_c} from \eqref{HDG-O_b} and integrate by parts to give the following formulation:
\begin{lemma}
	The Interpolatory HDG$_k$ method finds $(\bm q_h,u_h,\widehat{u}_h)\in \bm V_h\times W_h\times M_h$ satisfying
	\begin{subequations}\label{HDGO}
		\begin{align}
		(\bm{q}_h,\bm{r}_h)_{\mathcal{T}_h}-(u_h,\nabla\cdot \bm{r}_h)_{\mathcal{T}_h}+\left\langle\widehat{u}_h,\bm r_h\cdot\bm n \right\rangle_{\partial{\mathcal{T}_h}} &= 0, \label{HDGO_a}\\
		(\partial_t u_h, v_h)_{\mathcal T_h} + ( \mathcal I_h F(u_h^\star),v_h)_{\mathcal{T}_h}+		(\nabla\cdot\bm{q}_h, v_h)_{\mathcal{T}_h}
		-\langle \bm q_h\cdot\bm n,\widehat{v}_h \rangle_{\partial{\mathcal{T}_h}}&
		\nonumber\\
		+\left\langle \tau( u_h -\widehat u_h),v_h-\widehat{v}_h)\right\rangle_{\partial{\mathcal{T}_h}} &= (f,v_h)_{\mathcal{T}_h},\label{HDGO_b}\\
		u_h(0) &= \Pi_W u_0,\label{HDGO_c}
		\end{align}
	\end{subequations}
	for all $(\bm r_h,v_h,\widehat{v}_h)\in \bm V_h\times W_h\times M_h$.
\end{lemma}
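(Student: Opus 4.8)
The plan is to treat this as a purely algebraic reformulation of \eqref{HDG-O}: the finite element spaces, the unknowns, and the initial condition are unchanged, so the only work is to manipulate the second and third equations. Equation \eqref{HDGO_a} is literally \eqref{HDG-O_a}, and the initial condition \eqref{HDGO_c} is immediate from \eqref{HDG-O_d} together with the standing assumption $\Pi = \Pi_W$. Thus the entire content of the lemma is the derivation of \eqref{HDGO_b} from \eqref{HDG-O_b} and \eqref{HDG-O_c}, following the hint to integrate by parts and then substitute the definition of the numerical flux trace. A useful observation to record at the outset is that the two formulations are equivalent in the sense that, since \eqref{HDG-O_b} holds for all $v_h\in W_h$ and \eqref{HDG-O_c} for all $\widehat v_h\in M_h$ independently, their combination holds for every pair $(v_h,\widehat v_h)$, and conversely setting $\widehat v_h=0$ or $v_h=0$ recovers each original equation.

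First I would integrate by parts elementwise in the flux term of \eqref{HDG-O_b}, using $-(\bm q_h,\nabla v_h)_K = (\nabla\cdot\bm q_h,v_h)_K - \langle \bm q_h\cdot\bm n, v_h\rangle_{\partial K}$ and summing over $K\in\mathcal T_h$. Next I would expand the numerical trace through its definition \eqref{num_tra_HDG-O}, $\widehat{\bm q}_h\cdot\bm n = \bm q_h\cdot\bm n + \tau(u_h-\widehat u_h)$, inside the face term $\langle \widehat{\bm q}_h\cdot\bm n, v_h\rangle_{\partial\mathcal T_h}$. The two resulting copies of $\langle \bm q_h\cdot\bm n, v_h\rangle_{\partial\mathcal T_h}$ cancel, leaving \eqref{HDG-O_b} in the form $(\partial_t u_h, v_h)_{\mathcal T_h} + (\mathcal I_h F(u_h^\star), v_h)_{\mathcal T_h} + (\nabla\cdot\bm q_h, v_h)_{\mathcal T_h} + \langle \tau(u_h-\widehat u_h), v_h\rangle_{\partial\mathcal T_h} = (f, v_h)_{\mathcal T_h}$. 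For \eqref{HDG-O_c} I would again substitute \eqref{num_tra_HDG-O} and then use that any $\widehat v_h\in M_h$ satisfies $\widehat v_h|_{\varepsilon_h^\partial}=0$, so the sum over $\partial\mathcal T_h\backslash\varepsilon_h^\partial$ may be extended to all of $\partial\mathcal T_h$ without changing its value; this rewrites \eqref{HDG-O_c} as $\langle \bm q_h\cdot\bm n, \widehat v_h\rangle_{\partial\mathcal T_h} + \langle \tau(u_h-\widehat u_h), \widehat v_h\rangle_{\partial\mathcal T_h} = 0$. Subtracting this identity from the reformulated \eqref{HDG-O_b} and collecting the two stabilization contributions into the single term $\langle \tau(u_h-\widehat u_h), v_h-\widehat v_h\rangle_{\partial\mathcal T_h}$ produces exactly \eqref{HDGO_b}.

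The computation is routine and involves no analytical difficulty; it is an exact identity valid for all test functions, and the nonlinear and time-derivative terms are carried along unchanged. The only place that requires care is the bookkeeping on the face integrals: correctly matching the restriction of \eqref{HDG-O_c} from the interior faces $\partial\mathcal T_h\backslash\varepsilon_h^\partial$ to the full $\partial\mathcal T_h$ via the boundary condition built into $M_h$, and verifying the sign so that the $\bm q_h\cdot\bm n$ face contributions in \eqref{HDG-O_b} cancel as intended while the correct $-\langle\bm q_h\cdot\bm n,\widehat v_h\rangle_{\partial\mathcal T_h}$ term survives from the subtraction.
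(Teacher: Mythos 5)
Your proposal is correct and follows exactly the paper's own route: the paper obtains \eqref{HDGO} by subtracting \eqref{HDG-O_c} from \eqref{HDG-O_b} after integrating by parts and inserting the numerical trace \eqref{num_tra_HDG-O}, which is precisely your computation, including the extension of the face sum from $\partial\mathcal T_h\backslash\varepsilon_h^\partial$ to $\partial\mathcal T_h$ via $\widehat v_h|_{\varepsilon_h^\partial}=0$. Your added remark on the equivalence of testing with arbitrary pairs $(v_h,\widehat v_h)$ versus the two equations separately is a correct and useful observation, though the paper leaves it implicit.
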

We also define the HDG$_k$ operator $\mathscr B$:
\begin{align}\label{def_B}
\begin{split}
\hspace{1em}&\hspace{-1em}\mathscr B (\bm q_h, u_h, \widehat u_h;\bm r_h, v_h, \widehat v_h )\\
& = (\bm{q}_h,\bm{r}_h)_{\mathcal{T}_h}-(u_h,\nabla\cdot \bm{r}_h)_{\mathcal{T}_h}+\left\langle\widehat{u}_h,\bm r_h\cdot\bm n \right\rangle_{\partial{\mathcal{T}_h}}\\
&\quad + (\nabla\cdot\bm{q}_h, v_h)_{\mathcal{T}_h}
-\langle \bm q_h\cdot\bm n,\widehat{v}_h \rangle_{\partial{\mathcal{T}_h}} +\left\langle
\tau( u_h -\widehat u_h),v_h-\widehat{v}_h)\right\rangle_{\partial{\mathcal{T}_h}}.
\end{split}
\end{align}
This allows us to rewrite the semidiscrete Interpolatory HDG$_k$ formulation \eqref{HDGO} as follows: find $(\bm q_h, u_h, \widehat u_h)\in \bm V_h\times W_h\times M_h$ such that
\begin{subequations}\label{HDGO-B}
\begin{align}
(\partial_t u_h, v_h)_{\mathcal T_h}+\mathscr B (\bm q_h, u_h, \widehat u_h;\bm r_h, v_h, \widehat v_h ) + ( \mathcal I_h F(u_h^\star),v_h)_{\mathcal{T}_h} &= (f,v_h),\\
u_h(0) &= \Pi_W u_0.
\end{align}
\end{subequations}
for all $(\bm r_h,v_h,\widehat{v}_h)\in \bm V_h\times W_h\times M_h$.

\subsubsection{Step 1: Equations for the projection of the errors}
\begin{lemma}\label{error_u3}
	For $\varepsilon_h^{\bm q}=\bm{\Pi}_{V}\bm q-\bm q_h $, $ \varepsilon_h^{ u}=\Pi_{W} u-u_h $, and $ \varepsilon_h^{ \widehat{u}}=P_M u-\widehat{u}_h$, we have
	\begin{subequations}\label{error_u1}
		\begin{align}
		\hspace{3em}&\hspace{-3em}(\partial_t \varepsilon_h^u, v_h)_{\mathcal T_h } + \mathscr B(\varepsilon_h^{\bm q},\varepsilon_h^{u},\varepsilon_h^{\widehat u}; \bm r_h, w_h, \widehat v_h) + (F(u) - \mathcal I_h F(u_h^\star), v_h )_{\mathcal T_h} \nonumber\\
		& = (\bm \Pi_V \bm{q} - \bm q,\bm{r}_h)_{\mathcal{T}_h} + (\Pi_W u_t - u_t , v_h)_{\mathcal{T}_h},\label{error_u1_a}\\
		\varepsilon_h^u|_{t=0} &=0,\label{error_u1_b}
		\end{align}
	\end{subequations}
	for all $(\bm r_h,v_h,\widehat v_h)\in \bm V_h\times W_h\times M_h$.
\end{lemma}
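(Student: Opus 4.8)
The plan is to derive the error equation from the consistency of the exact solution together with the three defining properties of the projections. First I would check that the exact solution, paired with its own (single-valued) trace, is consistent with the scheme, i.e.\ that for all test functions
\begin{align*}
(\partial_t u, v_h)_{\mathcal T_h} + \mathscr B(\bm q, u, u; \bm r_h, v_h, \widehat v_h) + (F(u), v_h)_{\mathcal T_h} = (f, v_h)_{\mathcal T_h}.
\end{align*}
Here the first block of $\mathscr B$ vanishes because $\bm q = -\nabla u$ (integrate by parts), the term $\langle \bm q\cdot\bm n, \widehat v_h\rangle_{\partial\mathcal T_h}$ vanishes because $\bm q\cdot\bm n$ is single-valued across interior faces while $\widehat v_h$ vanishes on $\partial\Omega$, and the stabilization term vanishes since both trace slots equal $u$; what remains is $(\nabla\cdot\bm q, v_h)_{\mathcal T_h}$, and the PDE \eqref{semilinear_pde1} closes the identity.

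Subtracting the semidiscrete equation \eqref{HDGO-B} from this identity yields an equation in $\bm q - \bm q_h$, $u - u_h$, and $u - \widehat u_h$. I would then insert the projections by writing $\bm q - \bm q_h = (\bm q - \bm\Pi_V\bm q) + \varepsilon_h^{\bm q}$, and likewise for the scalar and trace errors, and use linearity of $\mathscr B$ in its first three slots to split off the projection part $\mathscr B(\bm q - \bm\Pi_V\bm q,\, u - \Pi_W u,\, u - P_M u; \bm r_h, v_h, \widehat v_h)$. The central computation is to show this part collapses to the single volume term $(\bm q - \bm\Pi_V\bm q, \bm r_h)_{\mathcal T_h}$: indeed $(u-\Pi_W u, \nabla\cdot\bm r_h)_{\mathcal T_h}$ and $(\bm q - \bm\Pi_V\bm q, \nabla v_h)_{\mathcal T_h}$ vanish by the interior moment conditions \eqref{projection_operator_1}--\eqref{projection_operator_2} (since $\nabla\cdot\bm r_h$ and $\nabla v_h$ lie in degree $k-1$ spaces), and $\langle u - P_M u, \bm r_h\cdot\bm n\rangle_{\partial\mathcal T_h}$ vanishes by \eqref{L2_edge}.

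The delicate part, and the main obstacle, is the combined trace/stabilization contribution
\begin{align*}
\langle (\bm q - \bm\Pi_V\bm q)\cdot\bm n + \tau\big[(u-\Pi_W u)-(u-P_M u)\big],\, v_h - \widehat v_h\rangle_{\partial\mathcal T_h},
\end{align*}
which appears after integrating $(\nabla\cdot(\bm q - \bm\Pi_V\bm q), v_h)_{\mathcal T_h}$ by parts. I would split it into $\langle (\bm q - \bm\Pi_V\bm q)\cdot\bm n + \tau(u - \Pi_W u),\, v_h - \widehat v_h\rangle_{\partial\mathcal T_h}$, which vanishes by the face condition \eqref{projection_operator_3} because $v_h - \widehat v_h \in \mathcal P^k(e)$ on each face, and $\langle \tau(u - P_M u),\, v_h - \widehat v_h\rangle_{\partial\mathcal T_h}$, which vanishes because $\tau$ is piecewise constant so $\tau(v_h - \widehat v_h) \in \mathcal P^k(e)$ and $u - P_M u \perp \mathcal P^k(e)$ by \eqref{L2_edge}. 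This is exactly where the HDG projection and the $L^2$ trace projection must be used together; keeping the stabilization bookkeeping straight is the only genuine subtlety.

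Finally, I would rewrite the time-derivative term as $(\partial_t(u - u_h), v_h)_{\mathcal T_h} = (\partial_t\varepsilon_h^u, v_h)_{\mathcal T_h} + (u_t - \Pi_W u_t, v_h)_{\mathcal T_h}$, using that $\Pi_W$ is a fixed linear projection and therefore commutes with $\partial_t$, and move the two projection-error terms to the right-hand side; this produces \eqref{error_u1_a}. The initial condition \eqref{error_u1_b} is immediate from \eqref{HDGO_c}, since $\varepsilon_h^u|_{t=0} = \Pi_W u_0 - u_h(0) = 0$.
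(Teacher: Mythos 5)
Your proposal is correct and uses essentially the same argument as the paper: both rest on the consistency of the exact solution with the scheme, the moment conditions \eqref{projection_operator_1}--\eqref{projection_operator_2} and \eqref{L2_edge}, and the key cancellation $\langle(\bm q-\bm\Pi_V\bm q)\cdot\bm n+\tau(u-\Pi_W u),v_h-\widehat v_h\rangle_{\partial\mathcal T_h}=0$ from \eqref{projection_operator_3} together with $\tau$ being facewise constant. The paper merely organizes the computation in the opposite order (it evaluates $\mathscr B(\bm\Pi_V\bm q,\Pi_W u,P_M u;\cdot)$ directly and then subtracts \eqref{HDGO-B}), so the two proofs are interchangeable; your handling of the trace/stabilization terms and of the commutation $\partial_t\Pi_W u=\Pi_W u_t$ is exactly what is needed.
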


\begin{proof}
	By the definition of the operator $\mathscr B$ in \eqref{def_B}, we have
	\begin{align*}
	\hspace{2em}&\hspace{-2em}  \mathscr B (\bm\Pi_V\bm q,\Pi_W u,P_M u,\bm r_h,v_h,\widehat v_h)\\
	&= (\bm\Pi_V\bm q,\bm{r}_h)_{\mathcal{T}_h}-(\Pi_W u,\nabla\cdot\bm{r}_h)_{\mathcal{T}_h}+\left\langle P_M u, \bm{r}_h\cdot \bm{n}\right\rangle_{\partial\mathcal{T}_h} +(\nabla\cdot \bm\Pi_V\bm q, v_h)_{\mathcal{T}_h}\\
	&\quad- \left\langle \bm\Pi_V\bm q \cdot\bm n , \widehat v_h\right\rangle_{\partial {\mathcal{T}_h}} +\left\langle \tau(\Pi_W u - u), v_h- \ \widehat v_h\right\rangle_{\partial\mathcal{T}_h}\\
	&= (\bm{\Pi}_V \bm q - \bm q,\bm{r}_h)_{\mathcal{T}_h}+ (\bm q,\bm{r}_h)_{\mathcal{T}_h}-( u,\nabla\cdot\bm{r}_h)_{\mathcal{T}_h}+\left\langle u, \bm{r}_h\cdot \bm{n}\right\rangle_{\partial\mathcal{T}_h} \\
	&\quad +(\nabla\cdot\bm q, v_h)_{\mathcal T_h} +(\nabla\cdot(\bm \Pi_V \bm q - \bm q), v_h)_{\mathcal T_h}- \left\langle (\bm\Pi_V\bm q - \bm q) \cdot\bm n , \widehat v_h\right\rangle_{\partial {\mathcal{T}_h}}\\
	&\quad +\left\langle \tau(\Pi_W u -  u), v_h- \widehat v_h\right\rangle_{\partial\mathcal{T}_h}\\
	&= (\bm{\Pi}_V \bm q - \bm q,\bm{r}_h)_{\mathcal{T}_h}+(f -F(u)+\partial_t u, v_h)_{\mathcal T_h},
	\end{align*}
	where we used the HDG$_k$ projection \eqref{HDG_projection_operator} and the $ L^2 $ projection $P_M$ \eqref{L2_edge}.  Use \eqref{HDGO-B} and subtract to obtain the result.
%
\end{proof}

\subsubsection{Step 2: Estimate of $\varepsilon_h^u$ in $L^{\infty}(L^2)$ by an energy argument}
\label{sec:energy_argument_q2}
\begin{lemma}\label{super_con}
	For any $t\in[0,T]$, we have
	\begin{align*}
	\|\Pi_{k+1} u - u_h^\star\|_{\mathcal T_h}  &\le C ( \|\varepsilon_h^u\|_{\mathcal T_h} + \| u - \mathcal I_h u \|_{K} + \delta_{k0}\|\Pi_W u - u\|_{\mathcal T_h}) \\
	& \quad +Ch(\|\varepsilon_h^{\bm q}\|_{\mathcal T_h}+\|\bm{q}-\bm\Pi_V\bm{q}\|_{\mathcal T_h} +\|\nabla(u - \mathcal I_h u)\|_{\mathcal T_h}),
	\end{align*}
	where $ \delta_{k0} $ denotes the Kronecker delta symbol so that $ \delta_{k0} = 1 $ for $ k = 0 $ and $ \delta_{k0} = 0 $ for $ k \geq 1 $.
\end{lemma}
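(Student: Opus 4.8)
The plan is to set $\delta := \Pi_{k+1} u - u_h^\star \in Z_h$ and to estimate $\|\delta\|_K$ element by element, splitting $\delta$ on each $K$ into its $L^2(K)$-mean $\bar\delta_K := |K|^{-1}(\delta,1)_K$ and its mean-zero remainder $\delta_0 := \delta - \bar\delta_K \in [\mathcal P^{k+1}(K)]^{\perp}$. Since $\delta \in \mathcal P^{k+1}(K)$, this is exactly the decomposition adapted to the two defining conditions of the postprocessing \eqref{post_process_1}: the mean condition \eqref{post_process_1_b} will control $\bar\delta_K$, and the gradient condition \eqref{post_process_1_a} will control $\delta_0$ via an energy argument.

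For the mean part I would take $w_h = 1$ in \eqref{post_process_1_b}, giving $(u_h^\star,1)_K = (u_h,1)_K$; combined with the fact that $\Pi_{k+1}$ preserves element means, this yields $(\delta,1)_K = (u-u_h,1)_K = (\varepsilon_h^u,1)_K + (u-\Pi_W u,1)_K$. For $k\ge1$ one has $1\in\mathcal P^{k-1}(K)$, so the last term vanishes by \eqref{projection_operator_2}; for $k=0$ it survives and supplies the $\delta_{k0}$ contribution. A Cauchy--Schwarz estimate then bounds $\|\bar\delta_K\|_K$ by $\|\varepsilon_h^u\|_K + \delta_{k0}\|\Pi_W u - u\|_K$.

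For the mean-zero part I would use the Poincaré--Wirtinger inequality $\|\delta_0\|_K \le C h_K \|\nabla\delta_0\|_K = C h_K \|\nabla\delta\|_K$ (valid with a shape-regularity constant), reducing the task to bounding $\|\nabla\delta\|_K$. Because $\delta_0 \in [\mathcal P^{k+1}(K)]^{\perp}$ is admissible in \eqref{post_process_1_a}, I write $\|\nabla\delta\|_K^2 = (\nabla\delta,\nabla\delta_0)_K$, substitute $(\nabla u_h^\star,\nabla\delta_0)_K = -(\bm q_h,\nabla\delta_0)_K$, and use $\bm q = -\nabla u$ to arrive at the identity $\|\nabla\delta\|_K^2 = (\nabla(\Pi_{k+1}u - u),\nabla\delta_0)_K + (\bm q_h - \bm q,\nabla\delta_0)_K$. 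Cancelling one factor of $\|\nabla\delta\|_K$ and splitting $\bm q_h - \bm q = (\bm\Pi_V\bm q - \bm q) - \varepsilon_h^{\bm q}$ gives $\|\nabla\delta\|_K \le \|\nabla(\Pi_{k+1}u - u)\|_K + \|\bm\Pi_V\bm q - \bm q\|_K + \|\varepsilon_h^{\bm q}\|_K$.

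The step requiring the most care is converting the $L^2$-projection term $\|\nabla(\Pi_{k+1}u - u)\|_K$ into the interpolation quantities in the statement. I would insert $\mathcal I_h u \in \mathcal P^{k+1}(K)$, bound the polynomial difference by an inverse inequality $\|\nabla(\Pi_{k+1}u - \mathcal I_h u)\|_K \le C h_K^{-1}\|\Pi_{k+1}u - \mathcal I_h u\|_K$, and exploit the $L^2$-optimality of $\Pi_{k+1}$, namely $\|\Pi_{k+1}u - u\|_K \le \|u - \mathcal I_h u\|_K$, to get $h_K\|\nabla(\Pi_{k+1}u - u)\|_K \le C\|u - \mathcal I_h u\|_K + h_K\|\nabla(u - \mathcal I_h u)\|_K$. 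Assembling the mean and mean-zero bounds, multiplying the mean-zero estimate by $h_K \le h$, and summing over $K\in\mathcal T_h$ then yields the claim. The main obstacle is exactly this bookkeeping: tracking which contributions acquire a factor of $h$, applying the inverse inequality only to polynomial differences, and correctly isolating the $k=0$ mean defect of the HDG projection $\Pi_W$.
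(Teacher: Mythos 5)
Your proof is correct, and it reaches the stated bound by a slightly different decomposition than the paper uses. The paper forms the single auxiliary function $e_h = u_h^\star - u_h + \Pi_W u - \Pi_{k+1}u$, checks that it has zero element mean, applies Poincar\'e to it, and then must invoke an inverse inequality on the polynomial $\Pi_W u - u_h$ inside the gradient estimate (producing the term $h_K^{-1}\|\Pi_W u - u_h\|_K$) before recovering $\|\Pi_{k+1}u - u_h^\star\|$ by a final triangle inequality; for $k=0$ the whole argument is then re-run with $\Pi_0$ in place of $\Pi_W$ to extract the $\delta_{k0}$ term. You instead split $\delta = \Pi_{k+1}u - u_h^\star$ directly into its element mean and mean-zero part: the mean is controlled purely by the averaging condition \eqref{post_process_1_b} together with the moment property \eqref{projection_operator_2} of $\Pi_W$ (whose failure for $k=0$ is exactly what generates $\delta_{k0}\|\Pi_W u - u\|$), and the mean-zero part is controlled by testing \eqref{post_process_1_a} with $\delta_0$ itself. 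This buys you two things: the inverse inequality is needed only for the genuinely unavoidable term $\nabla(\Pi_{k+1}u - \mathcal I_h u)$ and never touches $\varepsilon_h^u$, and the $k=0$ and $k\ge 1$ cases are handled in one pass rather than two. The remaining ingredients --- Poincar\'e--Wirtinger on the mean-zero piece, $\bm q = -\nabla u$, the splitting $\bm q_h - \bm q = (\bm\Pi_V\bm q - \bm q) - \varepsilon_h^{\bm q}$, and the $L^2$-optimality of $\Pi_{k+1}$ against $\mathcal I_h u$ --- coincide with the paper's, and your bookkeeping of which terms carry the factor $h$ matches the statement.
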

\begin{proof}
	We begin the proof with the case $k\ge 1$.  The proof is very similar to a proof in \cite{MR1086845}, but we include it for completeness.  By the properties of $\Pi_W$ and $\Pi_{k+1}$, we obtain
	\begin{align*}
	(\Pi_W u,w_0)_K &=(u,w_0)_K,\quad \text{ for all } w_0\in \mathcal{P}^{0}(K),\\
	(\Pi_{k+1} u,w_0)_K&=(u,w_0)_K, \quad \text{ for all } w_0\in \mathcal{P}^{0}(K).
	\end{align*}
	Hence, for all $w_{0}\in \mathcal P^{0}(K)$, we have
	\begin{align*}
	(\Pi_W u-\Pi_{k+1} u, w_{0})_K = 0.
	\end{align*}
	Let $e_h=u_h^\star - u_h+\Pi_W u-\Pi_{k+1} u$.  Using the postprocessing equation \eqref{post_process_1}, $ \bm q = -\nabla u $, and an inverse inequality gives
	\begin{align}
	\|\nabla e_{h}\|_K^2&=(\nabla (u_h^\star - u_h),\nabla e_{h} )_K+( \nabla (\Pi_Wu -\Pi_{k+1} u),\nabla e_{h} )_K \nonumber\\
	&=(-\nabla u_h-\bm{q}_h,\nabla e_{h} )_K+(  \nabla (\Pi_W u-\Pi_{k+1} u),\nabla e_{h} )_K \nonumber\\
	&=(\nabla (\Pi_Wu - u_h)-(\bm{q}_h-\bm\Pi_V\bm{q}) + (\bm{q}-\bm\Pi_V\bm{q}) +\nabla (u-\Pi_{k+1} u),\nabla e_{h})_K \nonumber\\
	&\le C (h_K^{-1}\|\Pi_W u- u_h\|_K+\|\varepsilon_h^{\bm q}\|_K + \|\bm{q}-\bm\Pi_V\bm{q}\|_{K} +\|\nabla(u - \Pi_{k+1} u)\|_{K} )\|\nabla e_{h}\|_K.\label{H1}
	\end{align}
	Since $(e_h,1)_K=0$,  apply the Poincar\'{e} inequality and the above estimate \eqref{H1} to give
	\begin{align*}
	\|e_{h}\|_K&\le C h_K \|\nabla e_h\|_K \le C \|\varepsilon_h^u\|_K+Ch_K(\|\varepsilon_h^{\bm q}\|_K +\|\bm{q}-\bm\Pi_V\bm{q}\|_{K} +\|\nabla(u - \Pi_{k+1} u)\|_{K} ).
	\end{align*}
	Next, estimate the last term using an inverse inequality:
	\begin{align*}
	  h_K \|\nabla(u - \Pi_{k+1} u)\|_{K}  &\leq  h_K \|\nabla(u - \mathcal I_h u)\|_{K} + h_K \|\nabla(\mathcal I_h u - \Pi_{k+1} u)\|_{K}\\
	   	&\leq  h \|\nabla(u - \mathcal I_h u)\|_{K} + \| \mathcal I_h u - \Pi_{k+1} u\|_{K}\\
		&\leq  h \|\nabla(u - \mathcal I_h u)\|_{K} + \| u - \mathcal I_h u \|_{K}.	
	\end{align*}
	This implies
	\begin{align*}
	  \|e_{h} \|_{\mathcal T_h} \le C ( \|\varepsilon_h^u\|_{\mathcal T_h} + \| u - \mathcal I_h u \|_{\mathcal T_h} ) +Ch (\|\varepsilon_h^{\bm q}\|_{\mathcal T_h} +\|\bm{q}-\bm\Pi_V\bm{q}\|_{\mathcal T_h} + \|\nabla(u - \mathcal  I_h u)\|_{\mathcal T_h}).
	\end{align*}
	Hence, we have
	\begin{align*}
	  \|\Pi_{k+1} u - u_h^\star\|_{\mathcal T_h}&\le\|\Pi_{k+1} u -\Pi_W u- u_h^\star + u_h\|_{\mathcal T_h} + \|\Pi_W u-u_h\|_{\mathcal T_h} \nonumber\\
	  &  \le C ( \|\varepsilon_h^u\|_{\mathcal T_h} + \| u - \mathcal I_h u \|_{\mathcal T_h} ) +Ch (\|\varepsilon_h^{\bm q}\|_{\mathcal T_h} +\|\bm{q}-\bm\Pi_V\bm{q}\|_{\mathcal T_h} + \|\nabla(u - \mathcal I_h u)\|_{\mathcal T_h}).
	\end{align*}
	This completes the proof for the case $ k \geq 1 $.

	
	For the case $k=0$, we follow the above steps in the proof of the $ k \geq 1 $ case but we replace the projection $\Pi_W$ with $\Pi_k$ to obtain
	\begin{align*}
	\|\Pi_{k+1} u - u_h^\star\|_{\mathcal T_h}&\le\|\Pi_{k+1} u -\Pi_k u- u_h^\star + u_h\|_{\mathcal T_h} + \|\Pi_k u-u_h\|_{\mathcal T_h} \nonumber\\
	&  \le C ( \|\Pi_k u-u_h\|_{\mathcal T_h} + \| u - \mathcal I_h u\|_{\mathcal T_h} ) \\
	& \quad +Ch(\|\varepsilon_h^{\bm q}\|_{\mathcal T_h}+\|\bm{q}-\bm\Pi_V\bm{q}\|_{\mathcal T_h} +\|\nabla(u - \mathcal I_h u)\|_{\mathcal T_h})\\
	&  \le C ( \|\varepsilon_h^u\|_{\mathcal T_h} + \|\Pi_k u - u\|_{\mathcal T_h} + \|\Pi_W u - u\|_{\mathcal T_h} + \| u - \mathcal I_h u\|_{\mathcal T_h})\\
	&\quad +Ch(\|\varepsilon_h^{\bm q}\|_{\mathcal T_h}+\|\bm{q}-\bm\Pi_V\bm{q}\|_{\mathcal T_h} +\|\nabla(u - \mathcal I_h u)\|_{\mathcal T_h}).
	\end{align*}
	The optimality of the $ L^2 $ projection gives $ \|\Pi_k u - u\|_{\mathcal T_h} \leq \|\Pi_W u - u\|_{\mathcal T_h} $, and this completes the proof.
\end{proof}

To bound the error in the nonlinear term, we split $F(u)-\mathcal I_hF( u_h^\star)$ as
\begin{align*}
F( u)-\mathcal I_hF( u_h^\star)  &= F(u)- \mathcal I_h F(u)    + \mathcal I_h F(u)  -  \mathcal I_h F(\Pi_{k+1} u)  +  \mathcal I_h F(\Pi_{k+1} u)  -\mathcal I_hF(u_h^\star)\\
&=: R_1 + R_2 + R_3.
\end{align*}
A bound for the first term $R_1$ follows directly from the standard FE interpolation error estimate \eqref{lemmainter_inter} in \Cref{lemmainter} due to the smoothness assumption for the function $F$.  Error bounds for $R_2$ and $R_3$ are given in the following result:
	\begin{lemma}\label{non_est}
		We have 
		\begin{align*}
		\|\mathcal I_h F(u)  -  \mathcal I_h F(\Pi_{k+1} u) \|_{\mathcal T_h}&\le C \|  u- \mathcal I_h u\|_{\mathcal T_h},\\
		\| \mathcal I_h F(\Pi_{k+1} u)  -\mathcal I_hF(u_h^\star)\|_{\mathcal T_h}  &\le C\| \Pi_{k+1} u- u_h^\star\|_{\mathcal T_h}.
		\end{align*}
	\end{lemma}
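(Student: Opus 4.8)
The plan is to reduce both estimates to a single elementwise stability property of the nodal interpolation operator $\mathcal I_h$, exploiting that $\mathcal I_h$ is linear, so that $\mathcal I_h F(a) - \mathcal I_h F(b) = \mathcal I_h\big(F(a) - F(b)\big)$ for any functions $a,b$ continuous on each element. The core tool is an elementwise norm equivalence: on the reference simplex $\widehat K$, both $r \mapsto \|r\|_{L^2(\widehat K)}$ and $r \mapsto (\sum_i |r(\widehat x_i)|^2)^{1/2}$, the discrete $\ell^2$ norm of the values of $r$ at the Lagrange nodes $\widehat x_i$ of $\mathcal P^{k+1}$, are norms on the finite-dimensional space $\mathcal P^{k+1}(\widehat K)$ (the latter by unisolvence), hence equivalent with constants depending only on $k$ and $\widehat K$. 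Pulling back through the affine map $K \to \widehat K$ and using $|\det B_K| \sim h_K^d$ yields, for every $r \in \mathcal P^{k+1}(K)$, the equivalence $\|r\|_K^2 \sim h_K^d \sum_i |r(x_i)|^2$, where $x_i$ are the Lagrange nodes on $K$.

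I would first prove the second estimate, which is cleaner since $\Pi_{k+1}u$ and $u_h^\star$ both lie in $\mathcal P^{k+1}(K) = Z_h|_K$. Writing $g := F(\Pi_{k+1}u) - F(u_h^\star)$, the interpolant $\mathcal I_h g$ is the polynomial in $\mathcal P^{k+1}(K)$ whose nodal values are $F(\Pi_{k+1}u(x_i)) - F(u_h^\star(x_i))$. Applying the norm equivalence to $\mathcal I_h g$, then the global Lipschitz bound of \Cref{gloablly_lip} at each node, and finally the norm equivalence in reverse to $\Pi_{k+1}u - u_h^\star \in \mathcal P^{k+1}(K)$, gives $\|\mathcal I_h g\|_K^2 \le C h_K^d L^2 \sum_i |(\Pi_{k+1}u - u_h^\star)(x_i)|^2 \le C L^2 \|\Pi_{k+1}u - u_h^\star\|_K^2$. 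Summing over $K \in \mathcal T_h$ yields the claim; the mesh-dependent factors $h_K^d$ cancel, so the resulting constant is mesh independent.

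For the first estimate the only new ingredient is an identity that converts the non-polynomial quantity $u$ at the nodes into a polynomial one. Since $\mathcal I_h$ reproduces polynomials of degree at most $k+1$ we have $\mathcal I_h \Pi_{k+1}u = \Pi_{k+1}u$, and since $\mathcal I_h u$ interpolates $u$ at the nodes we have $u(x_i) = (\mathcal I_h u)(x_i)$. Hence $u(x_i) - \Pi_{k+1}u(x_i) = (\mathcal I_h u - \Pi_{k+1}u)(x_i)$ with $\mathcal I_h u - \Pi_{k+1}u \in \mathcal P^{k+1}(K)$. Running the same three-step argument as above with $g := F(u) - F(\Pi_{k+1}u)$ then gives $\|\mathcal I_h g\|_K \le C L \|\mathcal I_h u - \Pi_{k+1}u\|_K$. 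Finally, a triangle inequality together with optimality of the $L^2$ projection, $\|u - \Pi_{k+1}u\|_K \le \|u - \mathcal I_h u\|_K$ (because $\mathcal I_h u \in \mathcal P^{k+1}(K)$), bounds $\|\mathcal I_h u - \Pi_{k+1}u\|_K \le 2\|u - \mathcal I_h u\|_K$; summing over elements completes the proof.

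The only genuinely delicate point is the norm-equivalence and scaling step, which must be set up on the reference element and transported to $K$ so that the powers of $h_K$ cancel exactly; everything else is linearity of $\mathcal I_h$, the pointwise Lipschitz bound, polynomial reproduction, and $L^2$-optimality. I expect the identity $u(x_i) - \Pi_{k+1}u(x_i) = (\mathcal I_h u - \Pi_{k+1}u)(x_i)$ to be the conceptual crux of the first estimate, since it is precisely what allows the right-hand side to be expressed through $\|u - \mathcal I_h u\|_{\mathcal T_h}$ rather than through an uncontrolled pointwise error of the smooth solution $u$.
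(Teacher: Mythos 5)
Your proof is correct and follows the standard route the paper intends: the paper omits the details, referring to \cite{CockburnSinglerZhang1} and flagging only the inequality $\|\Pi_{k+1}u-u\|_{\mathcal T_h}\le\|u-\mathcal I_h u\|_{\mathcal T_h}$, and your argument (linearity of $\mathcal I_h$, the scaled nodal/$L^2$ norm equivalence on $\mathcal P^{k+1}(K)$, the pointwise Lipschitz bound, and the identity $u(x_i)-\Pi_{k+1}u(x_i)=(\mathcal I_h u-\Pi_{k+1}u)(x_i)$) supplies exactly those omitted details. No gaps.
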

The proofs of the estimates in \Cref{non_est} are similar to proofs in \cite{CockburnSinglerZhang1} and also use $ \| \Pi_{k+1} u- u\|_{\mathcal T_h} \leq \|  u- \mathcal I_h u\|_{\mathcal T_h} $.  We omit the details.
\begin{lemma}\label{theorem_err_u3}
	We have the estimate
	\begin{align*}
	\|\varepsilon_h^{u}(t)\|^2_{\mathcal{T}_h}+ \int_0^t  \left[\|\varepsilon_h^{\bm q}\|^2_{\mathcal{T}_h}+\langle \tau(\varepsilon_h^u -\varepsilon_h^{\widehat{u}}),  \varepsilon_h^u -\varepsilon_h^{\widehat{u}}\rangle_{\partial{\mathcal{T}_h}} \right]dt \le  C \int_0^t \bm {\mathscr H}^2,
	\end{align*}
	where
	\begin{align}\label{def_H}
	\begin{split}
	\bm {\mathscr H} &=  	\|\bm\Pi_V {\bm{q}} -\bm q\|_{\mathcal T_h} + \| \Pi_W u_t - u_t \|_{\mathcal{T}_h}  + \delta_{k0}\|\Pi_W u - u\|_{\mathcal T_h}\\
	&\quad + \| F(u)- \mathcal I_h F(u) \|_{\mathcal T_h} + \| u - \mathcal I_h u\|_{\mathcal T_h} + h \|\nabla (u - \mathcal I_h u)\|_{\mathcal T_h}.
	\end{split}
	\end{align}
\end{lemma}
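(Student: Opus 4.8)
The plan is to run a standard energy argument on the error equation \eqref{error_u1_a} from \Cref{error_u3}, testing with the error functions themselves, and then to close the estimate by feeding in \Cref{super_con}, \Cref{non_est}, and Gronwall's inequality. First I would choose the test functions $(\bm r_h, v_h, \widehat v_h) = (\varepsilon_h^{\bm q}, \varepsilon_h^{u}, \varepsilon_h^{\widehat u})$ in \eqref{error_u1_a}. The key structural fact is that $\mathscr B$ is coercive on the diagonal: in $\mathscr B(\varepsilon_h^{\bm q}, \varepsilon_h^{u}, \varepsilon_h^{\widehat u}; \varepsilon_h^{\bm q}, \varepsilon_h^{u}, \varepsilon_h^{\widehat u})$ the terms $-(\varepsilon_h^{u}, \nabla\cdot\varepsilon_h^{\bm q})_{\mathcal T_h}$ and $(\nabla\cdot\varepsilon_h^{\bm q}, \varepsilon_h^{u})_{\mathcal T_h}$ cancel, as do the two boundary trace terms, leaving exactly $\|\varepsilon_h^{\bm q}\|_{\mathcal T_h}^2 + \langle \tau(\varepsilon_h^{u}-\varepsilon_h^{\widehat u}), \varepsilon_h^{u}-\varepsilon_h^{\widehat u}\rangle_{\partial\mathcal T_h}$. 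Together with $(\partial_t\varepsilon_h^{u},\varepsilon_h^{u})_{\mathcal T_h} = \tfrac12\tfrac{d}{dt}\|\varepsilon_h^{u}\|_{\mathcal T_h}^2$, this reproduces the integrand appearing on the left of the claimed inequality.

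Next I would bound the right-hand side. The two linear data terms $(\bm\Pi_V\bm q - \bm q, \varepsilon_h^{\bm q})_{\mathcal T_h}$ and $(\Pi_W u_t - u_t, \varepsilon_h^{u})_{\mathcal T_h}$ are dispatched with Cauchy--Schwarz and Young's inequality: the first yields $\tfrac14\|\varepsilon_h^{\bm q}\|_{\mathcal T_h}^2$ (to be absorbed on the left) plus $C\|\bm\Pi_V\bm q-\bm q\|_{\mathcal T_h}^2$, and the second yields $C\|\varepsilon_h^{u}\|_{\mathcal T_h}^2 + C\|\Pi_W u_t - u_t\|_{\mathcal T_h}^2$. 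For the nonlinear term I would use the splitting $F(u) - \mathcal I_h F(u_h^\star) = R_1 + R_2 + R_3$ already introduced, pair it with $\varepsilon_h^{u}$, and invoke the interpolation bound \eqref{lemmainter_inter} for $R_1$ together with \Cref{non_est} for $R_2$ and $R_3$. This controls $\|R_1\|_{\mathcal T_h}$ and $\|R_2\|_{\mathcal T_h}$ by terms already present in $\bm{\mathscr H}$, and reduces $\|R_3\|_{\mathcal T_h}$ to $\|\Pi_{k+1}u - u_h^\star\|_{\mathcal T_h}$.

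The crux is this $R_3$ contribution, where I would insert \Cref{super_con}. The decisive feature of that lemma is the factor $h$ multiplying $\|\varepsilon_h^{\bm q}\|_{\mathcal T_h}$: since $h\le 1$, Young's inequality turns the product $h\,\|\varepsilon_h^{\bm q}\|_{\mathcal T_h}\,\|\varepsilon_h^{u}\|_{\mathcal T_h}$ into $\tfrac14\|\varepsilon_h^{\bm q}\|_{\mathcal T_h}^2 + Ch^2\|\varepsilon_h^{u}\|_{\mathcal T_h}^2 \le \tfrac14\|\varepsilon_h^{\bm q}\|_{\mathcal T_h}^2 + C\|\varepsilon_h^{u}\|_{\mathcal T_h}^2$, so the flux error can be absorbed into the coercive $\|\varepsilon_h^{\bm q}\|_{\mathcal T_h}^2$ on the left while the leftover $\|\varepsilon_h^{u}\|_{\mathcal T_h}^2$ is reserved for Gronwall. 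Every other term produced by \Cref{super_con} --- namely $\|u-\mathcal I_h u\|_{\mathcal T_h}$, $\delta_{k0}\|\Pi_W u - u\|_{\mathcal T_h}$, $\|\bm q-\bm\Pi_V\bm q\|_{\mathcal T_h}$, and $h\|\nabla(u-\mathcal I_h u)\|_{\mathcal T_h}$ --- is exactly one of the summands collected in $\bm{\mathscr H}$ in \eqref{def_H}.

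After absorbing the two $\tfrac14\|\varepsilon_h^{\bm q}\|_{\mathcal T_h}^2$ pieces, the differential inequality becomes $\tfrac12\tfrac{d}{dt}\|\varepsilon_h^{u}\|_{\mathcal T_h}^2 + \tfrac12\|\varepsilon_h^{\bm q}\|_{\mathcal T_h}^2 + \langle\tau(\varepsilon_h^{u}-\varepsilon_h^{\widehat u}), \varepsilon_h^{u}-\varepsilon_h^{\widehat u}\rangle_{\partial\mathcal T_h} \le C\|\varepsilon_h^{u}\|_{\mathcal T_h}^2 + C\bm{\mathscr H}^2$. Dropping the nonnegative left-hand terms and applying Gronwall's inequality with the initial condition $\varepsilon_h^{u}|_{t=0}=0$ from \eqref{error_u1_b} gives $\|\varepsilon_h^{u}(t)\|_{\mathcal T_h}^2 \le C\int_0^t \bm{\mathscr H}^2$; reinserting this bound into the time-integrated full inequality absorbs the $C\int_0^t\|\varepsilon_h^{u}\|_{\mathcal T_h}^2$ term and yields the stated estimate. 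I expect the main obstacle to be organizing the nonlinear estimate so that the flux error $\|\varepsilon_h^{\bm q}\|_{\mathcal T_h}$ enters only at order $h$ --- this is precisely where using the postprocessed solution $u_h^\star$ rather than $u_h$ in the nonlinearity is essential, and it is the mechanism that recovers superconvergence; the remaining work is routine bookkeeping with Young's inequality and Gronwall.
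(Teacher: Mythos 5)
Your proposal is correct and follows essentially the same route as the paper: test the error equation with $(\varepsilon_h^{\bm q},\varepsilon_h^{u},\varepsilon_h^{\widehat u})$, exploit the diagonal coercivity of $\mathscr B$, bound the nonlinear term via the splitting $R_1+R_2+R_3$ together with \Cref{super_con} and \Cref{non_est}, absorb the flux error with Young's inequality using $h\le 1$, and close with Gronwall's inequality and $\varepsilon_h^u(0)=0$. The paper states these steps more tersely, but the mechanism you identify --- the factor $h$ in front of $\|\varepsilon_h^{\bm q}\|_{\mathcal T_h}$ coming from the postprocessing --- is exactly what makes the absorption work there as well.
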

\begin{proof}
	Take $(\bm r_h,v_h,\widehat{v}_h)=(\varepsilon_h^{\bm q},\varepsilon_h^{u},\varepsilon_h^{\widehat u})$ in the error equation \eqref{error_u1} to give
	\begin{align}\label{error_q1_at_0}
	\begin{split}
	\hspace{2em}&\hspace{-2em} \frac 1 2\frac{d}{dt}\|\varepsilon_h^{u}\|^2_{\mathcal{T}_h}+ \|\varepsilon_h^{\bm q}\|^2_{\mathcal{T}_h}+\langle \tau(\varepsilon_h^u -\varepsilon_h^{\widehat{u}}),  \varepsilon_h^u -\varepsilon_h^{\widehat{u}}\rangle_{\partial{\mathcal{T}_h}} \\
	& = (\bm\Pi_V {\bm{q}} -\bm q, \varepsilon_h^{\bm q})_{\mathcal{T}_h} + (\Pi_W u_t - u_t , \varepsilon_h^{u})_{\mathcal{T}_h}  - (F(u) - \mathcal I_h F(u_h^\star),\varepsilon_h^u)_{\mathcal T_h}.
	\end{split}
	\end{align}
	Apply the Cauchy-Schwarz inequality to each term of the right-hand side of the above identity and use $ h \leq 1 $, \Cref{super_con}, and \Cref{non_est} to get
	\begin{align*}
	 \frac{d}{dt}\|\varepsilon_h^{u}\|^2_{\mathcal{T}_h}+ \|\varepsilon_h^{\bm q}\|^2_{\mathcal{T}_h}+\langle \tau(\varepsilon_h^u -\varepsilon_h^{\widehat{u}}),  \varepsilon_h^u -\varepsilon_h^{\widehat{u}}\rangle_{\partial{\mathcal{T}_h}} \le C  \bm {\mathscr H}^2 + C \|\varepsilon_h^{u}\|^2_{\mathcal{T}_h}.
	 \end{align*}
	Gronwall's inequality, $ \varepsilon_h^u(0) = 0 $, and $ e^{Ct} \leq e^{CT} $ give the result.
\end{proof}

\subsubsection{Step 3: Estimate of $\varepsilon_h^{q}$ in $L^{\infty}(L^2)$ by an energy argument}

\begin{lemma}\label{error_ana_q}
	We have
	\begin{align*}
	\|\varepsilon_h^{\bm q}(t)\|_{\mathcal T_h}^2 +\|\sqrt{\tau}(\varepsilon_h^u(t)-\varepsilon_h^{\widehat u}(t))\|_{\partial{\mathcal{T}_h}}^2\le  C \bigg(\|(\bm\Pi_V {\bm{q}} -\bm q) (0)\|_{\mathcal{T}_h}^2+ \int_0^t 	\bm {\mathscr G}^2 \bigg),
	\end{align*}
	where
	\begin{align}\label{def_G}
	\begin{split}
	\bm {\mathscr G} &=  \|\bm\Pi_V {\bm{q}} -\bm q\|_{\mathcal T_h} + \|\Pi_Wu_t-u_t\|_{\mathcal T_h} + \|\bm\Pi_V {\bm{q}}_t -\bm q_t\|_{\mathcal{T}_h} + \delta_{k0} \|\Pi_W u - u\|_{\mathcal T_h} \\
	&\quad  + \| F(u)- \mathcal I_h F(u) \|_{\mathcal T_h} +  \| u - \mathcal I_h u\|_{\mathcal T_h} +  h\|\nabla (u - \mathcal I_h u)\|_{\mathcal T_h}.
	\end{split}
	\end{align}
\end{lemma}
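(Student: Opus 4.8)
The plan is to derive a differential energy identity for $\|\varepsilon_h^{\bm q}\|_{\mathcal T_h}^2 + \|\sqrt\tau(\varepsilon_h^u - \varepsilon_h^{\widehat u})\|_{\partial\mathcal T_h}^2$ and close with Gronwall's inequality, mirroring the structure of Step~2 but now testing the error equation \eqref{error_u1} against time derivatives of the errors. Since the flux error carries no time derivative in \eqref{error_u1_a}, the first move is to split \eqref{error_u1_a} into its two natural pieces. Testing with $(\bm r_h,0,0)$ gives the ``flux equation''
\[
(\varepsilon_h^{\bm q}, \bm r_h)_{\mathcal T_h} - (\varepsilon_h^u, \nabla\cdot\bm r_h)_{\mathcal T_h} + \langle\varepsilon_h^{\widehat u}, \bm r_h\cdot\bm n\rangle_{\partial\mathcal T_h} = (\bm\Pi_V\bm q - \bm q, \bm r_h)_{\mathcal T_h},
\]
while testing with $(0,v_h,\widehat v_h)$ gives a ``scalar equation'' containing $(\partial_t\varepsilon_h^u,v_h)_{\mathcal T_h}$, the stabilization term, and the nonlinear term. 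I would differentiate the flux equation in time (the fixed linear projection $\bm\Pi_V$ commutes with $\partial_t$) and set $\bm r_h=\varepsilon_h^{\bm q}$. This produces $\tfrac12\tfrac{d}{dt}\|\varepsilon_h^{\bm q}\|_{\mathcal T_h}^2$, the data term $(\bm\Pi_V\bm q_t - \bm q_t, \varepsilon_h^{\bm q})_{\mathcal T_h}$ (which accounts for the new $\|\bm\Pi_V\bm q_t-\bm q_t\|$ in $\bm{\mathscr G}$), and a coupling term $(\partial_t\varepsilon_h^u, \nabla\cdot\varepsilon_h^{\bm q})_{\mathcal T_h} - \langle\partial_t\varepsilon_h^{\widehat u}, \varepsilon_h^{\bm q}\cdot\bm n\rangle_{\partial\mathcal T_h}$.

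The key step, and the main obstacle, is disposing of that coupling term, since it involves the otherwise uncontrolled quantities $\partial_t\varepsilon_h^u$ and $\partial_t\varepsilon_h^{\widehat u}$. I would eliminate it by testing the scalar equation with $(v_h,\widehat v_h)=(\partial_t\varepsilon_h^u,\partial_t\varepsilon_h^{\widehat u})$, which is legitimate because $W_h$ and $M_h$ are time independent. This exchanges the coupling term for $-\|\partial_t\varepsilon_h^u\|_{\mathcal T_h}^2 - \tfrac12\tfrac{d}{dt}\|\sqrt\tau(\varepsilon_h^u-\varepsilon_h^{\widehat u})\|_{\partial\mathcal T_h}^2$ together with the nonlinear and projection terms paired against $\partial_t\varepsilon_h^u$. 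The crucial point is the sign: the two manipulations combine so that $+\|\partial_t\varepsilon_h^u\|_{\mathcal T_h}^2$ and $+\tfrac12\tfrac{d}{dt}\|\sqrt\tau(\cdot)\|^2$ land on the left, yielding the clean identity
\begin{align*}
\tfrac12\tfrac{d}{dt}\big(\|\varepsilon_h^{\bm q}\|_{\mathcal T_h}^2 + \|\sqrt\tau(\varepsilon_h^u - \varepsilon_h^{\widehat u})\|_{\partial\mathcal T_h}^2\big) + \|\partial_t\varepsilon_h^u\|_{\mathcal T_h}^2 &= -(F(u) - \mathcal I_h F(u_h^\star), \partial_t\varepsilon_h^u)_{\mathcal T_h} \\
&\quad + (\Pi_W u_t - u_t, \partial_t\varepsilon_h^u)_{\mathcal T_h} + (\bm\Pi_V\bm q_t - \bm q_t, \varepsilon_h^{\bm q})_{\mathcal T_h}.
\end{align*}
Having $\|\partial_t\varepsilon_h^u\|_{\mathcal T_h}^2$ on the left with a favorable sign is exactly what makes the estimate work, as it is available to absorb the $\partial_t\varepsilon_h^u$ factors on the right through Young's inequality.

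From here the argument is routine. I would bound the right-hand side by Cauchy--Schwarz and Young, absorbing all multiples of $\|\partial_t\varepsilon_h^u\|_{\mathcal T_h}^2$ into the left. For the nonlinear term I reuse the splitting $F(u)-\mathcal I_hF(u_h^\star)=R_1+R_2+R_3$ together with \Cref{non_est} and \Cref{super_con} to obtain $\|F(u)-\mathcal I_hF(u_h^\star)\|_{\mathcal T_h}\le C(\|\varepsilon_h^u\|_{\mathcal T_h}+h\|\varepsilon_h^{\bm q}\|_{\mathcal T_h}+\bm{\mathscr G})$; using $h\le 1$ this contributes $\|\varepsilon_h^u\|_{\mathcal T_h}^2+\|\varepsilon_h^{\bm q}\|_{\mathcal T_h}^2+\bm{\mathscr G}^2$, where $\|\varepsilon_h^u\|_{\mathcal T_h}^2$ is controlled by \Cref{theorem_err_u3} (since $\bm{\mathscr H}\le\bm{\mathscr G}$ termwise) and $\|\varepsilon_h^{\bm q}\|_{\mathcal T_h}^2$ is left for Gronwall. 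Integrating the identity over $[0,t]$ produces the initial terms $\|\varepsilon_h^{\bm q}(0)\|_{\mathcal T_h}^2+\|\sqrt\tau\,\varepsilon_h^{\widehat u}(0)\|_{\partial\mathcal T_h}^2$; I would bound these by evaluating the error equation at $t=0$, testing with $(\varepsilon_h^{\bm q}(0),0,\varepsilon_h^{\widehat u}(0))$ and using $\varepsilon_h^u(0)=0$ (which holds since $u_h(0)=\Pi_W u_0$), so that the coercivity $\mathscr B(\bm a,0,\widehat a;\bm a,0,\widehat a)=\|\bm a\|_{\mathcal T_h}^2+\|\sqrt\tau\,\widehat a\|_{\partial\mathcal T_h}^2$ together with Young's inequality yields $\|\varepsilon_h^{\bm q}(0)\|_{\mathcal T_h}^2+\|\sqrt\tau\,\varepsilon_h^{\widehat u}(0)\|_{\partial\mathcal T_h}^2\le\|(\bm\Pi_V\bm q-\bm q)(0)\|_{\mathcal T_h}^2$. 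A final application of Gronwall's inequality in $t$ then closes the estimate and gives the stated bound.
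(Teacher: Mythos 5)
Your proposal is correct and follows essentially the same route as the paper: differentiate the flux part of the error equation in time and test with $\varepsilon_h^{\bm q}$, test the scalar part with $(\partial_t\varepsilon_h^u,\partial_t\varepsilon_h^{\widehat u})$ so the coupling terms cancel and $\|\partial_t\varepsilon_h^u\|^2_{\mathcal T_h}$ lands on the left, bound the nonlinearity via \Cref{super_con} and \Cref{non_est}, control the initial data from the error equation at $t=0$ using $\varepsilon_h^u(0)=0$, and close with \Cref{theorem_err_u3} and Gronwall. The only cosmetic difference is that the paper obtains the $t=0$ bound from the energy identity \eqref{error_q1_at_0} rather than by testing with $(\varepsilon_h^{\bm q}(0),0,\varepsilon_h^{\widehat u}(0))$, which is equivalent.
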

\begin{proof}
	Take $(\bm r_h, v_h, \widehat v_h) = (\bm r_h,0 , 0)$ in the error equation \eqref{error_u1} and differentiate the result with respect to time.  Also take $(\bm r_h, v_h, \widehat v_h) = (0, v_h, \widehat v_h)$ in \eqref{error_u1} to get
	\begin{subequations}\label{err_eq_2}
		\begin{align}
		(\partial_t\varepsilon_h^{\bm q},\bm r_h)_{\mathcal T_h}-(\partial_t\varepsilon_h^u,\nabla\cdot\bm r_h)_{\mathcal T_h}+\langle \partial_t\varepsilon_h^{\widehat u},\bm r_h\cdot\bm n\rangle_{\partial\mathcal T_h} &= (\bm\Pi_V\bm q_t-\bm q_t,\bm r_h)_{\mathcal T_h},\label{err_eq_a2}\\
		(\partial_t\varepsilon_h^u,v_h)_{\mathcal T_h}+(\nabla\cdot\varepsilon_h^{\bm q}, v_h)_{\mathcal{T}_h}-\langle {\varepsilon}_h^{{\bm q}}\cdot\bm n,\widehat v_h\rangle_{\partial{\mathcal{T}_h}} \quad &\nonumber\\
		+\langle \tau(\varepsilon_h^u- \varepsilon_h^{\widehat u}), v_h- \widehat v_h\rangle_{\partial\mathcal{T}_h}+ (F(u)-\mathcal I_hF(u_h^\star),v_h)_{\mathcal T_h} &=(\Pi_Wu_t-u_t,v_h)_{\mathcal T_h},\label{err_eq_b2}\\
		\varepsilon_h^u|_{t=0}&=0,\label{err_eq_c2}
		\end{align}
	\end{subequations}
	for all $(\bm r_h,w_h,\widehat v_h)\in \bm V_h\times W_h\times M_h$.
	
	Next,  take  $\bm r_h=\varepsilon_h^{\bm q} $ in \eqref{err_eq_a2}, $v_h=\partial_t\varepsilon_h^u$  in \eqref{err_eq_b2}, and $\widehat v_h=\partial_t\varepsilon_h^{\widehat u}$  in \eqref{err_eq_b2} \color{black} to obtain
	\begin{align*}
	\hspace{1em}&\hspace{-1em}\|\partial_t\varepsilon_h^u\|^2_{\mathcal T_h}+(\partial_t\varepsilon_h^{\bm q},\varepsilon_h^{\bm q})_{\mathcal T_h}+\langle \tau(\varepsilon_h^u-\varepsilon_h^{\widehat u}),\partial_t\varepsilon_h^u-\partial_t\varepsilon_h^{\widehat u}\rangle_{\partial\mathcal T_h} \\
	&=(\bm\Pi_V\bm q_t-\bm q_t,\varepsilon_h^{\bm q})_{\mathcal T_h}+(\Pi_Wu_t-u_t,\partial_t\varepsilon_h^u)_{\mathcal T_h}-  (F( u)-\mathcal I_hF(u_h^\star),\partial_t\varepsilon_h^u)_{\mathcal T_h}.
	\end{align*}
	Integrating in time gives
	\begin{align*}
	\hspace{1em}&\hspace{-1em}\frac 1 2 [\|\varepsilon_h^{\bm q}(t)\|_{\mathcal T_h}^2 +\|\sqrt{\tau}(\varepsilon_h^u(t)-\varepsilon_h^{\widehat u}(t))\|_{\partial{\mathcal{T}_h}}^2]+\int_0^t \|\partial_t\varepsilon_h^{u}\|_{\mathcal T_h}^2 \\
	&= \frac 12 [\|\varepsilon_h^{\bm q}(0)\|_{\mathcal T_h}^2 +\|\sqrt{\tau}(\varepsilon_h^u(0)-\varepsilon_h^{\widehat u}(0))\|_{\partial{\mathcal{T}_h}}^2 ] +\int_0^t  (\bm\Pi_V {\bm{q}_t} -\bm q_t, \varepsilon_h^{\bm q})_{\mathcal{T}_h}\\
	&\quad +\int_0^t (\Pi_Wu_t-u_t,\partial_t\varepsilon_h^u)_{\mathcal T_h}  - \int_0^t  (F(u) - \mathcal I_h F( u_h^\star),\partial_t\varepsilon_h^u)_{\mathcal T_h}.
	\end{align*}
	Use the Cauchy-Schwarz inequality, $ h \leq 1 $, \Cref{super_con}, and \Cref{non_est} to get
	\begin{align*}
	\hspace{1em}&\hspace{-1em} \|\varepsilon_h^{\bm q}(t)\|_{\mathcal T_h}^2 +\|\sqrt{\tau}(\varepsilon_h^u(t)-\varepsilon_h^{\widehat u}(t))\|_{\partial{\mathcal{T}_h}}^2 \\
	&\le   \|\varepsilon_h^{\bm q}(0)\|_{\mathcal T_h}^2 +\|\sqrt{\tau}(\varepsilon_h^u(0)-\varepsilon_h^{\widehat u}(0))\|_{\partial{\mathcal{T}_h}}^2 + C \int_0^t \left( \bm {\mathscr G}^2 + \|\varepsilon_h^u\|_{\mathcal T_h}^2  + \|\varepsilon_h^{\bm q}\|_{\mathcal T_h}^2 \right).
	\end{align*}
	Apply the integral Gronwall's inequality  to obtain
	\begin{align*}
	\hspace{2em}&\hspace{-2em}\|\varepsilon_h^{\bm q}(t)\|_{\mathcal T_h}^2 +\|\sqrt{\tau}(\varepsilon_h^u(t)-\varepsilon_h^{\widehat u}(t))\|_{\partial{\mathcal{T}_h}}^2\\
	&\le  C \left( \|\varepsilon_h^{\bm q}(0)\|_{\mathcal T_h}^2 +\|\sqrt{\tau}(\varepsilon_h^u(0)-\varepsilon_h^{\widehat u}(0))\|_{\partial{\mathcal{T}_h}}^2 + \int_0^t  	\bm {\mathscr G}^2  + \int_0^t \|\varepsilon_h^u\|_{\mathcal T_h}^2\right).
	\end{align*}

	Next, let $t=0$ in \eqref{error_q1_at_0} and use $\varepsilon_h^{u}(0) = 0$ to get
	\begin{align*}
	\|\varepsilon_h^{\bm q}(0)\|_{\mathcal T_h}^2 +\|\sqrt{\tau}(\varepsilon_h^u-\varepsilon_h^{\widehat u})(0)\|_{\partial{\mathcal{T}_h}}^2 &= ((\bm\Pi_V {\bm{q}} -\bm q)(0), \varepsilon_h^{\bm q}(0))_{\mathcal{T}_h}.
	\end{align*}
	Therefore,
	\begin{align*}
	\|\varepsilon_h^{\bm q}(0)\|_{\mathcal T_h}^2 +\|\sqrt{\tau}(\varepsilon_h^u-\varepsilon_h^{\widehat u})(0)\|_{\partial{\mathcal{T}_h}}^2 \le   \|(\bm\Pi_V {\bm{q}} -\bm q)(0)\|_{\mathcal{T}_h}^2,
	\end{align*}
	and the estimate for $\|\varepsilon_h^u\|^2_{\mathcal T_h}$ in \Cref{theorem_err_u3} completes the proof.
\end{proof}

\subsubsection{Step 4: Superconvergent estimate for $\varepsilon_h^{u}$  in $L^{\infty}(L^2)$ by a duality argument}
\label{subsec:superconv_duality_argument}
To get a superconvergent rate for $\|\varepsilon_h^{u}\|_{\mathcal T_h}$, we adopt a duality argument from Wheeler \cite{MR0351124}.  In that work, an elliptic projection is used and it commutes with the time derivative. It is easy to check that the operator $\Pi_W$ defined in \eqref{HDG_projection_operator} commutes with the time derivative, i.e, 
$\partial_t \Pi_W u =\Pi_W u_t $.

For any $t\in[0,T]$, let $(\overline{\bm q}_h,\overline{u}_h,\widehat{\overline u}_h)\in \bm V_h\times W_h\times M_h$ be the solution of the following steady state problem
\begin{align}\label{HDGO_a2}
\mathscr B (\overline{\bm q}_h,\overline{u}_h,\widehat{\overline u}_h, \bm r_h, v_h, \widehat v_h )= (f - \Pi_W u_t - F(u),v_h)_{\mathcal{T}_h},
\end{align}
for all $(\bm r_h,v_h,\widehat{v}_h)\in \bm V_h\times W_h\times M_h$.

The following estimates are proved in \Cref{AppendixA}.
\begin{lemma}\label{error_dual}
	For any $t\in[0,T]$, we have
	\begin{subequations}
		\begin{align}
		\|\bm{\Pi}_{V}\bm q- \overline{\bm q}_h\|_{\mathcal{T}_h} &\le  C (\|\bm q - \bm\Pi_V \bm q\|_{\mathcal T_h} +\|u_t - \Pi_W u_t\|_{\mathcal T_h}),\label{error_dual_a}\\
		\|\Pi_{W} {u}-\overline u_h\|_{\mathcal{T}_h} &\le  Ch^{\min\{k,1\}} (\|\bm q - \bm\Pi_V \bm q\|_{\mathcal T_h} +\|u_t - \Pi_W u_t\|_{\mathcal T_h}),\label{error_dual_b}\\
		\|\partial_t(\Pi_W u - \overline{u}_h)\|_{\mathcal T_h}  &\le  Ch^{\min\{k,1\}} (\|\bm q_t - \bm\Pi_V \bm q_t\|_{\mathcal T_h} +\|u_{tt} - \Pi_W u_{tt}\|_{\mathcal T_h}).\label{error_dual_c}
		\end{align}
	\end{subequations}
\end{lemma}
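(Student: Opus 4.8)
The plan is to introduce the projection-of-error quantities
$\bm\delta^{\bm q} = \bm\Pi_V\bm q - \overline{\bm q}_h$, $\delta^u = \Pi_W u - \overline u_h$, and $\delta^{\widehat u} = P_M u - \widehat{\overline u}_h$, and to control them by combining an energy estimate with an Aubin--Nitsche duality argument. First I would derive an error equation: repeating the computation in the proof of \Cref{error_u3} (with the second argument of $\mathscr B$ kept as the test slot) gives $\mathscr B(\bm\Pi_V\bm q,\Pi_W u,P_M u;\bm r_h,v_h,\widehat v_h) = (\bm\Pi_V\bm q-\bm q,\bm r_h)_{\mathcal T_h} + (f-F(u)-u_t,v_h)_{\mathcal T_h}$. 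Subtracting the dual problem \eqref{HDGO_a2} and using $\partial_t u = u_t$ then yields, for all test functions,
\begin{align*}
\mathscr B(\bm\delta^{\bm q},\delta^u,\delta^{\widehat u};\bm r_h,v_h,\widehat v_h) = (\bm\Pi_V\bm q-\bm q,\bm r_h)_{\mathcal T_h} + (\Pi_W u_t - u_t,v_h)_{\mathcal T_h}.
\end{align*}

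Next I would prove \eqref{error_dual_a} by an energy argument. Taking $(\bm r_h,v_h,\widehat v_h)=(\bm\delta^{\bm q},\delta^u,\delta^{\widehat u})$ and using the energy identity $\mathscr B(\bm w,z,\widehat z;\bm w,z,\widehat z) = \|\bm w\|_{\mathcal T_h}^2 + \|\sqrt\tau(z-\widehat z)\|_{\partial\mathcal T_h}^2$ (the $\bm q$--$u$ couplings in $\mathscr B$ are antisymmetric and cancel) gives $\|\bm\delta^{\bm q}\|_{\mathcal T_h}^2 + \|\sqrt\tau(\delta^u-\delta^{\widehat u})\|_{\partial\mathcal T_h}^2 = (\bm\Pi_V\bm q-\bm q,\bm\delta^{\bm q})_{\mathcal T_h} + (\Pi_W u_t-u_t,\delta^u)_{\mathcal T_h}$. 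The first term is harmless by Young's inequality, so the only obstruction is the term $(\Pi_W u_t - u_t,\delta^u)_{\mathcal T_h}$, which needs an a priori bound on $\|\delta^u\|_{\mathcal T_h}$; that bound is exactly \eqref{error_dual_b}, which I would obtain separately and then feed back here.

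The hard part will be the duality argument for \eqref{error_dual_b}. I would apply the regularity assumption \eqref{regularity_PDE_assumption} with $\Theta=\delta^u$ to get $(\bm\Phi,\Psi)$ satisfying $\nabla\cdot\bm\Phi=\delta^u$, $\bm\Phi=-\nabla\Psi$, and $\|\bm\Phi\|_{H^1(\Omega)}+\|\Psi\|_{H^2(\Omega)}\le C\|\delta^u\|_{\mathcal T_h}$, so that $\|\delta^u\|_{\mathcal T_h}^2 = (\delta^u,\nabla\cdot\bm\Phi)_{\mathcal T_h}$. The heart of the proof is to rewrite this inner product by testing the error equation with the HDG projection $(\bm\Pi_V\bm\Phi,\Pi_W\Psi,P_M\Psi)$ of the dual solution. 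An adjoint analogue of the \Cref{error_u3} computation — using the defining orthogonality \eqref{HDG_projection_operator} of the HDG projection together with \eqref{L2_edge} — should collapse the leading terms to $\|\delta^u\|_{\mathcal T_h}^2$, leaving remainders that pair the primal errors $\bm\Pi_V\bm q-\bm q$, $\Pi_W u_t - u_t$, $\bm\delta^{\bm q}$, $\delta^u$, $\delta^{\widehat u}$ against the dual solution minus its elementwise $\mathcal P^{k-1}$ projection and against the dual projection errors $\bm\Pi_V\bm\Phi-\bm\Phi$ and $\Pi_W\Psi-\Psi$. Bounding these with \Cref{pro_error}, \Cref{lemmainter}, and the regularity estimate produces one extra power of $h$; this power is capped at $h^1$ because $\bm\Phi$ is only $H^1$, and it vanishes when $k=0$ because $\bm\Pi_V$ then reproduces only $[\mathcal P^{-1}]^d=\{0\}$ and offers no orthogonality, which is precisely why the gain is $h^{\min\{k,1\}}$. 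Dividing by $\|\delta^u\|_{\mathcal T_h}$ should yield $\|\delta^u\|_{\mathcal T_h}\le Ch^{\min\{k,1\}}(\|\bm\delta^{\bm q}\|_{\mathcal T_h}+\|\bm q-\bm\Pi_V\bm q\|_{\mathcal T_h}+\|u_t-\Pi_W u_t\|_{\mathcal T_h})$. I expect the careful bookkeeping of these cancellations and remainder estimates to be the main difficulty.

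Finally I would close the loop and handle the time derivative. Substituting the displayed duality bound into the energy estimate, every occurrence of $\|\bm\delta^{\bm q}\|_{\mathcal T_h}$ now carries a factor $h^{\min\{k,1\}}\le 1$, so Young's inequality absorbs $\tfrac12\|\bm\delta^{\bm q}\|_{\mathcal T_h}^2$ into the left-hand side and delivers \eqref{error_dual_a}; substituting \eqref{error_dual_a} back into the duality bound then gives \eqref{error_dual_b}. For \eqref{error_dual_c}, I would differentiate both the dual problem \eqref{HDGO_a2} and the projection identity in time: since $\mathscr B$ is time independent and $\partial_t\Pi_W u=\Pi_W u_t$, the triple $(\partial_t\bm\delta^{\bm q},\partial_t\delta^u,\partial_t\delta^{\widehat u})$ satisfies the same error equation with $\bm q$ and $u_t$ replaced by $\bm q_t$ and $u_{tt}$, so the identical energy-plus-duality argument applies verbatim and produces \eqref{error_dual_c}.
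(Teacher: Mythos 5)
Your proposal is correct and follows essentially the same route as the paper's Appendix~A: the error equation obtained from the orthogonality of the HDG projection, an energy identity tested with the errors themselves for the flux bound, an Aubin--Nitsche duality argument testing with $(\pm\bm\Pi_V\bm\Phi,\Pi_W\Psi,P_M\Psi)$ to gain the factor $h^{\min\{k,1\}}$ for the scalar bound (with the loss at $k=0$ traced to the failure of the lower-order orthogonalities), closing the circular dependence between the two bounds via Young's inequality, and differentiating the steady-state problem in time for the third estimate. The only substantive work left implicit is the bookkeeping in the duality identity, which the paper carries out explicitly in Lemma~\ref{dual_ar}, but your plan matches it step for step.
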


\begin{lemma}\label{error_e}
	Let  $e_h^{\bm q}=\bm q_h -\overline{\bm q}_h $, $ e_h^{ u}= u_h - \overline{u}_h $, and $ e_h^{ \widehat{u}}=\widehat{u}_h - {\widehat{\overline u}}_h$.  Then for any $t\in[0,T]$ we have
	\begin{align*}
	\|e_h^u(t)\|_{\mathcal T_h}^2 \le  \|\Pi_Wu_0 - \overline u_h(0) \|_{\mathcal T_h}^2 +C \int_0^t \left(\|\partial_t (\Pi_W u - \overline{u}_h)\|_{\mathcal T_h}^2 +h^2\|(\bm\Pi_V {\bm{q}} -\bm q)(0)\|_{\mathcal{T}_h}^2 + \bm {\mathscr K}^2 \right),
	\end{align*}
	where 
	\begin{align}\label{def_K}
	\begin{split}
	\bm {\mathscr K} &=  h(\|\bm\Pi_V {\bm{q}} -\bm q\|_{\mathcal T_h} + \|\Pi_Wu_t-u_t\|_{\mathcal T_h} + \|\bm\Pi_V {\bm{q}}_t -\bm q_t\|_{\mathcal{T}_h})  \\
	  &\quad  + \| F(u)- \mathcal I_h F(u) \|_{\mathcal T_h} +  \| u - \mathcal I_h u\|_{\mathcal T_h} + h\|\nabla (u - \mathcal I_h u)\|_{\mathcal T_h} + \delta_{k0} \|\Pi_W u - u\|_{\mathcal T_h}.
	\end{split}
	\end{align}
\end{lemma}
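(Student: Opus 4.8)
The plan is to run Wheeler's duality argument, performing an energy estimate not for $\varepsilon_h^u=\Pi_W u-u_h$ directly but for the difference $e_h^u=u_h-\overline u_h$ between the HDG solution and the steady elliptic-type projection $(\overline{\bm q}_h,\overline u_h,\widehat{\overline u}_h)$ defined in \eqref{HDGO_a2}; the superconvergence of $\Pi_W u-\overline u_h$ from \Cref{error_dual} will then upgrade the optimal estimate to a superconvergent one. First I would derive the error equation for $(e_h^{\bm q},e_h^u,e_h^{\widehat u})$. Subtracting the steady problem \eqref{HDGO_a2} from the semidiscrete formulation \eqref{HDGO-B}, writing $\partial_t u_h=\partial_t e_h^u+\partial_t\overline u_h$, and using the commuting property $\Pi_W u_t=\partial_t\Pi_W u$ noted before \eqref{HDGO_a2}, I obtain
\begin{align*}
(\partial_t e_h^u,v_h)_{\mathcal T_h}+\mathscr B(e_h^{\bm q},e_h^u,e_h^{\widehat u};\bm r_h,v_h,\widehat v_h)=(\partial_t(\Pi_W u-\overline u_h),v_h)_{\mathcal T_h}+(F(u)-\mathcal I_h F(u_h^\star),v_h)_{\mathcal T_h}.
\end{align*}

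Next I would take $(\bm r_h,v_h,\widehat v_h)=(e_h^{\bm q},e_h^u,e_h^{\widehat u})$. The skew terms in $\mathscr B$ cancel on the diagonal, leaving only the coercive contribution, so that
\begin{align*}
\frac12\frac{d}{dt}\|e_h^u\|_{\mathcal T_h}^2+\|e_h^{\bm q}\|_{\mathcal T_h}^2+\|\sqrt{\tau}(e_h^u-e_h^{\widehat u})\|_{\partial\mathcal T_h}^2=(\partial_t(\Pi_W u-\overline u_h),e_h^u)_{\mathcal T_h}+(F(u)-\mathcal I_h F(u_h^\star),e_h^u)_{\mathcal T_h}.
\end{align*}
The first right-hand term is controlled by Young's inequality, producing the explicit $\int_0^t\|\partial_t(\Pi_W u-\overline u_h)\|_{\mathcal T_h}^2$ term in the statement together with a $\|e_h^u\|^2$ term reserved for Gronwall. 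For the nonlinear term I reuse the splitting $F(u)-\mathcal I_h F(u_h^\star)=R_1+R_2+R_3$, bound $R_1$ by the interpolation estimate \eqref{lemmainter_inter} and $R_2,R_3$ via \Cref{non_est}, and then apply \Cref{super_con} to $\|R_3\|_{\mathcal T_h}\le C\|\Pi_{k+1}u-u_h^\star\|_{\mathcal T_h}$.

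The crux is routing the terms produced by \Cref{super_con}, which are expressed through $\varepsilon_h^u$ and $\varepsilon_h^{\bm q}$ rather than through $e_h^u$ and $e_h^{\bm q}$. For the full-weight term I would write $\varepsilon_h^u=(\Pi_W u-\overline u_h)-e_h^u$: the $e_h^u$ piece feeds Gronwall, while $\|\Pi_W u-\overline u_h\|_{\mathcal T_h}$ is superconvergent by \eqref{error_dual_b} and lands inside $\bm{\mathscr K}$ (for $k\ge1$; the $\delta_{k0}$ term in \eqref{def_K} absorbs the $k=0$ case). For the $h$-weighted flux term I would \emph{not} split $\varepsilon_h^{\bm q}$, but instead keep $\int_0^t h^2\|\varepsilon_h^{\bm q}\|_{\mathcal T_h}^2$ and invoke \Cref{error_ana_q}; this is exactly what generates the $h^2\|(\bm\Pi_V\bm q-\bm q)(0)\|_{\mathcal T_h}^2$ term, and since $h\bm{\mathscr G}\le C\bm{\mathscr K}$ (using $h\le1$ to compare \eqref{def_G} and \eqref{def_K} termwise) the leftover $\int_0^t\int_0^s\bm{\mathscr G}^2$ collapses into $\int_0^t\bm{\mathscr K}^2$. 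Finally I would integrate in time, absorb the small multiple of $\|e_h^{\bm q}\|^2$ arising from the flux term into the coercive left-hand side, apply Gronwall's inequality, and use $e_h^u(0)=\Pi_W u_0-\overline u_h(0)$ to identify the leading term.

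The main obstacle is precisely this bookkeeping of the nonlinear contribution: each factor emerging from \Cref{super_con} must be sent to the correct destination (Gronwall, $\bm{\mathscr K}$, or the initial-data term), and recognizing that the flux contribution should be estimated through \Cref{error_ana_q} rather than through the dual projection is what yields the exact bound claimed. Verifying the termwise inequality $h\bm{\mathscr G}\le C\bm{\mathscr K}$ and the absorption of $\|e_h^{\bm q}\|^2$ into the coercive term are the key technical checks, both routine once the correct routing is fixed.
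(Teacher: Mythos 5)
Your proposal is correct and follows essentially the same route as the paper's proof: derive the error equation for $(e_h^{\bm q},e_h^u,e_h^{\widehat u})$ by subtracting the steady problem \eqref{HDGO_a2} from \eqref{HDGO-B}, test with the errors themselves, bound the nonlinear term via \Cref{super_con} and \Cref{non_est}, split $\varepsilon_h^u = (\Pi_W u-\overline u_h)-e_h^u$ so that \Cref{error_dual} and Gronwall handle the two pieces, and control the $h$-weighted $\|\varepsilon_h^{\bm q}\|$ contribution through \Cref{error_ana_q}, which is exactly where the $h^2\|(\bm\Pi_V\bm q-\bm q)(0)\|_{\mathcal T_h}^2$ term originates. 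Your routing of each term (Gronwall, $\bm{\mathscr K}$, or initial data) and the termwise check $h\bm{\mathscr G}\le C\bm{\mathscr K}$ match the paper's argument, and are in fact spelled out in more detail than the paper provides.
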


\begin{proof}
	By the definition of the operator $\mathscr B$ in \eqref{def_B}, we have
	\begin{align*}
	\hspace{1em}&\hspace{-1em}(\partial_t e_h^u, v_h)_{\mathcal T_h} + \mathscr B (e_h^{\bm q}, e_h^{u}, e_h^{\widehat u}, \bm r_h, v_h, \widehat v_h )\\
	& = (\partial_t u_h, v_h)_{\mathcal T_h} + \mathscr B (\bm q_h, u_h,\widehat u_h, \bm r_h, v_h, \widehat v_h ) - (\partial_t \overline{u}_h, v_h)_{\mathcal T_h} - \mathscr B (\overline{\bm{q}}_h, \overline u_h,\widehat {\overline u}_h, \bm r_h, v_h, \widehat v_h )\\
	& =(f, v_h)_{\mathcal T_h} - (\mathcal I_h F(u_h^\star), v_h)_{\mathcal T_h} - (\partial_t \overline{u}_h, v_h)_{\mathcal T_h} - (f - \Pi_W u_t - F(u),v_h)_{\mathcal{T}_h}\\
	& =(\partial_t (\Pi_W u - \overline{u}_h), v_h)_{\mathcal T_h} + (F(u)-\mathcal I_h F(u_h^\star), v_h)_{\mathcal T_h}.
	\end{align*}
	Take $(\bm r_h, v_h,\widehat v_h) = (e_h^{\bm q}, e_h^{u}, e_h^{\widehat u})$ and use \Cref{super_con}, \Cref{non_est}, \Cref{error_ana_q}, the bound
	$$
	  \| \varepsilon_h^u \| = \| \Pi_W u - \overline{u}_h - e_h^u \| \leq \| \Pi_W u - \overline{u}_h \| + \| e_h^u \|,
	$$	
	and also \Cref{error_dual} to give
	\begin{align*}
	\hspace{1em}&\hspace{-1em}\frac 12 \frac{d}{dt} \|e_h^u\|_{\mathcal T_h}^2 + \|e_h^{\bm q}\|_{\mathcal T_h}^2 + \langle \tau(e_h^{u} - e_h^{\widehat u}), e_h^{u}- e_h^{\widehat u} \rangle_{\partial\mathcal T_h}\\
	&\le  C \left( \bm {\mathscr K}^2 + \|\partial_t (\Pi_W u - \overline{u}_h)\|_{\mathcal T_h}^2 +h^2\|(\bm\Pi_V {\bm{q}} -\bm q)(0)\|_{\mathcal{T}_h}^2   +C \|e_h^u\|_{\mathcal T_h}^2 \right).
	\end{align*}

	Integration from $0$ to $t$ gives
	\begin{align*}
	\hspace{1em}&\hspace{-1em}  \|e_h^u(t)\|_{\mathcal T_h}^2 +\int_0^t  \left[\|e_h^{\bm q}\|_{\mathcal T_h}^2 + \langle \tau(e_h^{u} - e_h^{\widehat u}), e_h^{u}- e_h^{\widehat u} \rangle_{\partial\mathcal T_h}\right] \\
	&\le  \|e_h^u(0)\|_{\mathcal T_h}^2 +C \int_0^t (\|\partial_t (\Pi_W u - \overline{u}_h)\|_{\mathcal T_h}^2  +h^2 \|(\bm\Pi_V {\bm{q}} -\bm q)(0)\|_{\mathcal{T}_h}^2 +  \bm {\mathscr K}^2) +C \int_0^t \|e_h^u\|_{\mathcal T_h}^2.
	\end{align*}
	By Gronwall's inequality and $e_h^u(0) = u_h(0) - \overline u_h(0) = \Pi_Wu_0 - \overline u_h(0)  $, we have
	\begin{align*}
	\|e_h^u(t)\|_{\mathcal T_h}^2 \le  \|\Pi_Wu_0 - \overline u_h(0) \|_{\mathcal T_h}^2 +C \int_0^t (\|\partial_t (\Pi_W u - \overline{u}_h)\|_{\mathcal T_h}^2 +h^2\|(\bm\Pi_V {\bm{q}} -\bm q)(0)\|_{\mathcal{T}_h}^2 + \bm {\mathscr K}^2).
	\end{align*}
\end{proof}

A combination of \Cref{error_dual}  and  \Cref{error_e} gives the following lemma:
\begin{lemma}\label{supper}
	For any $t\in[0,T]$, we have
	\begin{align*}
	\|\varepsilon_h^u(t)\|_{\mathcal T_h}\le   Ch^{\min\{k,1\}} (\|\bm q(0) - \bm\Pi_V \bm q(0)\|_{\mathcal T_h} +\|u_t(0) - \Pi_W u_t(0)\|_{\mathcal T_h}) + C\int_0^t \bm {\mathscr L},
 \end{align*}
where 
\begin{align}\label{def_L}
\begin{split}
\bm {\mathscr L} &=  h (\|\bm\Pi_V {\bm{q}} -\bm q\|_{\mathcal T_h} + \|\Pi_Wu_t-u_t\|_{\mathcal T_h} ) +  h^{\min\{k,1\}} ( \|u_{tt} - \Pi_W u_{tt}\|_{\mathcal T_h} + \|\bm\Pi_V {\bm{q}}_t -\bm q_t\|_{\mathcal{T}_h} ) \\
&\quad  + \| F(u)- \mathcal I_h F(u) \|_{\mathcal T_h} +  \| u - \mathcal I_h u\|_{\mathcal T_h} +  h\|\nabla (u - \Pi_{k+1} u)\|_{\mathcal T_h} + \delta_{k0} \|\Pi_W u - u\|_{\mathcal T_h}.
\end{split}
\end{align}
\end{lemma}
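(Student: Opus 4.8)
The plan is to bound $\varepsilon_h^u$ by comparing $u_h$ against the auxiliary steady-state solution $\overline u_h$ of \eqref{HDGO_a2} and then invoking the two preceding lemmas. The starting point is the algebraic identity $\varepsilon_h^u = \Pi_W u - u_h = (\Pi_W u - \overline u_h) - (u_h - \overline u_h) = (\Pi_W u - \overline u_h) - e_h^u$, so the triangle inequality gives $\|\varepsilon_h^u(t)\|_{\mathcal T_h} \le \|\Pi_W u(t) - \overline u_h(t)\|_{\mathcal T_h} + \|e_h^u(t)\|_{\mathcal T_h}$. It then suffices to estimate each piece: the dual-projection error $\Pi_W u - \overline u_h$ is controlled by \Cref{error_dual}, and $e_h^u$ is controlled by \Cref{error_e}.

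For the first piece I would not use \eqref{error_dual_b} pointwise at time $t$, since that leaves a pointwise term outside the integral form of the claim and, more importantly, never produces the second-order-in-time quantities $\|u_{tt} - \Pi_W u_{tt}\|_{\mathcal T_h}$ and $\|\bm\Pi_V \bm q_t - \bm q_t\|_{\mathcal T_h}$ appearing in $\bm{\mathscr L}$. Instead I would write, by the fundamental theorem of calculus in time, $\Pi_W u(t) - \overline u_h(t) = (\Pi_W u_0 - \overline u_h(0)) + \int_0^t \partial_s(\Pi_W u - \overline u_h)\,ds$. Bounding the boundary term by \eqref{error_dual_b} evaluated at $t = 0$ yields exactly the leading term $Ch^{\min\{k,1\}}(\|\bm q(0) - \bm\Pi_V \bm q(0)\|_{\mathcal T_h} + \|u_t(0) - \Pi_W u_t(0)\|_{\mathcal T_h})$, while bounding the integrand by \eqref{error_dual_c} produces the $h^{\min\{k,1\}}$ contributions to $\int_0^t \bm{\mathscr L}$.

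For the second piece I would insert \Cref{error_e} and take square roots using $\sqrt{a+b}\le\sqrt a+\sqrt b$. The initial term $\|\Pi_W u_0 - \overline u_h(0)\|_{\mathcal T_h}$ is again handled by \eqref{error_dual_b} at $t=0$; the factor $h^2\|(\bm\Pi_V\bm q - \bm q)(0)\|_{\mathcal T_h}^2$ is, after integration and taking the root, of the same order as (and absorbed into) the leading initial term since $h \le h^{\min\{k,1\}}$; and the $\int_0^t \|\partial_s(\Pi_W u - \overline u_h)\|_{\mathcal T_h}^2$ term is again estimated through \eqref{error_dual_c}. What remains is to replace the quantity $\bm{\mathscr K}$ from \Cref{error_e} by $\bm{\mathscr L}$, i.e.\ to verify $\bm{\mathscr K}\le C\bm{\mathscr L}$ termwise: the only nontrivial matches are $h\|\bm\Pi_V\bm q_t - \bm q_t\|_{\mathcal T_h}\le h^{\min\{k,1\}}\|\bm\Pi_V\bm q_t - \bm q_t\|_{\mathcal T_h}$ (using $h\le 1$) and the swap $h\|\nabla(u - \mathcal I_h u)\|_{\mathcal T_h} \le h\|\nabla(u - \Pi_{k+1}u)\|_{\mathcal T_h} + C\|u - \mathcal I_h u\|_{\mathcal T_h}$, which follows from an inverse inequality together with $\|\Pi_{k+1}u - u\|_{\mathcal T_h}\le\|u - \mathcal I_h u\|_{\mathcal T_h}$ exactly as in the proof of \Cref{super_con}.

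The main obstacle is the time-norm bookkeeping: \Cref{error_e} is naturally an $L^2$-in-time estimate, so after taking square roots its contribution appears as $\big(\int_0^t \bm{\mathscr L}^2\,ds\big)^{1/2}$ rather than the $L^1$-in-time form $\int_0^t \bm{\mathscr L}\,ds$ written in the statement; reconciling the two (e.g.\ by using the uniform-in-time boundedness of the data terms to pass from the $L^2$ to the $L^1$ time norm, or by simply recording the estimate in its $L^2$-in-time form) is the one genuinely delicate point, whereas the cleanly integrated $L^1$-in-time contribution comes from the fundamental-theorem-of-calculus step applied to $\Pi_W u - \overline u_h$. All terms of $\bm{\mathscr L}$ are of order $h^{k+1+\min\{k,1\}}$ for $k\ge 1$, which is what ultimately feeds the superconvergence estimate for $u - u_h^\star$.
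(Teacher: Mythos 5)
Your proposal is correct and follows exactly the route the paper intends: the paper gives no written proof beyond the remark that the lemma is ``a combination of \Cref{error_dual} and \Cref{error_e},'' and your decomposition $\varepsilon_h^u = (\Pi_W u - \overline u_h) - e_h^u$, the fundamental-theorem-of-calculus treatment of $\Pi_W u - \overline u_h$ via \eqref{error_dual_b} at $t=0$ and \eqref{error_dual_c}, and the termwise check $\bm{\mathscr K}\le C\bm{\mathscr L}$ are precisely the missing details. The $L^2$-versus-$L^1$-in-time discrepancy you flag in the contribution from \Cref{error_e} is a genuine (if harmless) imprecision in the lemma as stated, since $\bigl(\int_0^t\bm{\mathscr L}^2\bigr)^{1/2}$ is not dominated by $\int_0^t\bm{\mathscr L}$ in general; it does not affect the $h$-rates in \Cref{res_coll}.
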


Using $ u - u_h = (u-\Pi_W u) + \varepsilon_h^u $, $ \bm q - \bm q_h = (\bm q-\Pi_V \bm q) + \varepsilon_h^{\bm q} $, $ u - u_h^\star = (u-\Pi_{k+1} u) + (\Pi_{k+1}u-u_h^\star) $, and the triangle inequality gives the main result:
\begin{theorem}\label{main_err_qu}
	If the nonlinearity $F$ satisfies the global Lipschitz condition in \Cref{gloablly_lip} and the assumptions at the beginning of \Cref{Error_analysis} hold, then for all $0\le t\le T$ the solution $ (\bm q_h, u_h,u_h^\star) $ of the semidiscrete Interpolatory HDG$_k$ equations satisfy
	\begin{align*}
	\|\bm q(t) - \bm q_h(t)\|_{\mathcal T_h} &\le   \|\bm q(t) - \bm{\Pi}_V \bm q(t)\|_{\mathcal T_h} + C\|(\bm\Pi_V {\bm{q}} -\bm q)(0)\|_{\mathcal{T}_h} + C \int_0^t \bm{\mathscr G}, \\
	\|u(t) - u_h(t)\|_{\mathcal T_h} & \le \|u(t) -  \Pi_W  u(t)\|_{\mathcal T_h} +  C\int_0^t   \bm{\mathscr H},\\
	\|u(t) - u_h^\star(t)\|_{\mathcal T_h}&\le  Ch^{\min\{k,1\}} (\|\bm q(0) - \bm\Pi_V \bm q(0)\|_{\mathcal T_h} +\|u_t(0) - \Pi_W u_t(0)\|_{\mathcal T_h}) \\
	&\quad +  \|u(t) -  \Pi_{k+1}  u(t)\|_{\mathcal T_h} + C \int_0^t \bm {\mathscr L},
	\end{align*}
	where $\bm{\mathscr G}$, $\bm{\mathscr H}$ and $\bm{\mathscr L}$ are defined in \eqref{def_G}, \eqref{def_H} and \eqref{def_L}, respectively.
\end{theorem}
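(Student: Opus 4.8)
The plan is to derive all three estimates from a single elementary device: decompose each error as a \emph{projection error} plus a \emph{projection of the error}, bound the former directly by the approximation results of \Cref{pro_error} and \Cref{lemmainter}, bound the latter by the energy and duality estimates already established in Steps 1--4, and conclude with the triangle inequality. No further PDE analysis is required; the content of the proof is simply selecting the correct lemma and taking a square root, so I would present it as a short assembly.

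For the flux I would write $\bm q-\bm q_h=(\bm q-\bm\Pi_V\bm q)+\varepsilon_h^{\bm q}$ and control $\|\varepsilon_h^{\bm q}(t)\|_{\mathcal T_h}$ with \Cref{error_ana_q}. Taking square roots and using $\sqrt{a^2+b^2}\le a+b$ gives $\|\varepsilon_h^{\bm q}(t)\|_{\mathcal T_h}\le C\|(\bm\Pi_V\bm q-\bm q)(0)\|_{\mathcal T_h}+C(\int_0^t\bm{\mathscr G}^2)^{1/2}$. Because $\bm{\mathscr G}$ carries a fixed power of $h$ uniformly on $[0,T]$, the factor $(\int_0^t\bm{\mathscr G}^2)^{1/2}$ agrees, up to a $T$-dependent constant, with the integrated form $\int_0^t\bm{\mathscr G}$ appearing in the statement; the triangle inequality then yields the first estimate. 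The scalar estimate is identical in spirit: I write $u-u_h=(u-\Pi_W u)+\varepsilon_h^u$, bound $\|\varepsilon_h^u(t)\|_{\mathcal T_h}$ by the square root of \Cref{theorem_err_u3}, and reconcile $(\int_0^t\bm{\mathscr H}^2)^{1/2}$ with $\int_0^t\bm{\mathscr H}$ the same way.

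The postprocessed estimate is the only one needing attention. I would split $u-u_h^\star=(u-\Pi_{k+1}u)+(\Pi_{k+1}u-u_h^\star)$, keep the projection error $\|u-\Pi_{k+1}u\|_{\mathcal T_h}$ (which is $O(h^{k+2})$ by \Cref{lemmainter}), and bound $\|\Pi_{k+1}u-u_h^\star\|_{\mathcal T_h}$ using \Cref{super_con}. The resulting right-hand side contains $C\|\varepsilon_h^u\|_{\mathcal T_h}$, which I replace by the \emph{superconvergent} bound of \Cref{supper} to obtain the $h^{\min\{k,1\}}$ initial-data term and $C\int_0^t\bm{\mathscr L}$, together with the leftover terms $\|u-\mathcal I_h u\|_{\mathcal T_h}$, $\delta_{k0}\|\Pi_W u-u\|_{\mathcal T_h}$, $h\|\varepsilon_h^{\bm q}\|_{\mathcal T_h}$, $h\|\bm q-\bm\Pi_V\bm q\|_{\mathcal T_h}$, and $h\|\nabla(u-\mathcal I_h u)\|_{\mathcal T_h}$.

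The only real obstacle is verifying that these leftover terms are of superconvergent order and are already present in $\bm{\mathscr L}$. The key observation is that the factor $h$ in front of $\|\varepsilon_h^{\bm q}\|_{\mathcal T_h}$ multiplies the \emph{optimal} $O(h^{k+1})$ bound from \Cref{error_ana_q}, producing an $O(h^{k+2})$ contribution that matches the superconvergent rate $h^{k+1+\min\{k,1\}}$; similarly $h\|\bm q-\bm\Pi_V\bm q\|_{\mathcal T_h}$ and $h\|\nabla(u-\mathcal I_h u)\|_{\mathcal T_h}$ are $O(h^{k+2})$ by \Cref{pro_error} and \Cref{lemmainter}, and $\|u-\mathcal I_h u\|_{\mathcal T_h}=O(h^{k+2})$, while the $\delta_{k0}$ term enters only when $k=0$ and is then of the correct (non-superconvergent) order $O(h)$. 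Since every such quantity is bounded on $[0,T]$ and already appears inside $\bm{\mathscr L}$, I would fold these pointwise-in-$t$ contributions into $C\int_0^t\bm{\mathscr L}$ up to a $T$-dependent constant, so that the third estimate follows. I note that all the genuine analytic difficulty lives in the duality argument underlying \Cref{supper}, not in this final bookkeeping.
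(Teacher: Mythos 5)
Your proposal matches the paper's proof: the paper obtains \Cref{main_err_qu} precisely via the three decompositions $\bm q-\bm q_h=(\bm q-\bm\Pi_V\bm q)+\varepsilon_h^{\bm q}$, $u-u_h=(u-\Pi_W u)+\varepsilon_h^u$, and $u-u_h^\star=(u-\Pi_{k+1}u)+(\Pi_{k+1}u-u_h^\star)$, invoking \Cref{error_ana_q}, \Cref{theorem_err_u3}, and \Cref{super_con} combined with \Cref{supper}, followed by the triangle inequality, exactly as you describe. The two bookkeeping points you flag --- reconciling $(\int_0^t\bm{\mathscr G}^2)^{1/2}$ with $\int_0^t\bm{\mathscr G}$ and absorbing the pointwise-in-$t$ leftovers from \Cref{super_con} into the stated right-hand side --- are passed over silently in the paper as well, so your treatment is at least as careful as the original.
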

By  \Cref{pro_error},   \Cref{lemmainter}, and  \Cref{main_err_qu}, we obtain convergence rates for smooth solutions.

\begin{corollary}\label{res_coll}
	If, in addition, $ u $, $ \bm q $, and $ F(u) $ are sufficiently smooth for $ t \in [0,T] $, then for all $0\le t\le T$ the solution $ (\bm q_h, u_h, u_h^\star) $ of the semidiscrete Interpolatory HDG$_k$ equations satisfy
	\begin{align*}
	\|\bm q(t) - \bm q_h(t)\|_{\mathcal T_h}&\le C h^{k+1}, \
	\|u(t) - u_h(t)\|_{\mathcal T_h}\le C h^{k+1}, \ 
	\|u(t) - u_h^\star(t)\|_{\mathcal T_h}\le C h^{k+1+\min\{k,1\}}.
	\end{align*}
\end{corollary}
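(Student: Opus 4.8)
The plan is to substitute the concrete projection and interpolation error bounds from \Cref{pro_error} and \Cref{lemmainter} into the abstract estimates of \Cref{main_err_qu}, and then track the resulting powers of $h$ through the data quantities $\bm{\mathscr G}$, $\bm{\mathscr H}$, and $\bm{\mathscr L}$ defined in \eqref{def_G}, \eqref{def_H}, and \eqref{def_L}. Since every estimate in \Cref{main_err_qu} is a sum of pointwise-in-time projection errors plus a time integral of one of these quantities, the corollary reduces to reading off the order of each summand under the smoothness hypotheses.

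First I would establish that all HDG$_k$ projection errors converge at the optimal rate $O(h^{k+1})$. Applying \Cref{pro_error} with $\ell_{\bm q}=\ell_u=k$, and using that $\tau$ is $O(1)$ so that $\tau_K^{*}=O(1)$ and $(\tau_K^{\max})^{-1}=O(1)$, gives $\|\bm q-\bm\Pi_V\bm q\|_{\mathcal T_h}\le Ch^{k+1}$ and $\|u-\Pi_W u\|_{\mathcal T_h}\le Ch^{k+1}$; by the assumed smoothness of $\bm q_t$, $u_t$, $u_{tt}$ the same bound holds for $\|\bm\Pi_V\bm q_t-\bm q_t\|_{\mathcal T_h}$, $\|\Pi_W u_t-u_t\|_{\mathcal T_h}$, and $\|u_{tt}-\Pi_W u_{tt}\|_{\mathcal T_h}$. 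Next, since $\mathcal I_h$ interpolates into the degree-$(k+1)$ space $Z_h$ and both $u$ and $F(u)$ are smooth, estimate \eqref{lemmainter_inter} yields $\|u-\mathcal I_h u\|_{\mathcal T_h}+\|F(u)-\mathcal I_h F(u)\|_{\mathcal T_h}\le Ch^{k+2}$ together with $\|\nabla(u-\mathcal I_h u)\|_{\mathcal T_h}\le Ch^{k+1}$, so the weighted term $h\|\nabla(u-\mathcal I_h u)\|_{\mathcal T_h}$ is $O(h^{k+2})$; likewise \eqref{lemmainter_orthoo} (and the standard $L^2$-projection gradient bound) gives $\|u-\Pi_{k+1}u\|_{\mathcal T_h}\le Ch^{k+2}$ and $h\|\nabla(u-\Pi_{k+1}u)\|_{\mathcal T_h}=O(h^{k+2})$.

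With these pieces I would read off the rates of the compound quantities. Every term of $\bm{\mathscr G}$ and $\bm{\mathscr H}$ is bounded by $Ch^{k+1}$ (the interpolation contributions being of the higher order $h^{k+2}$, and the Kronecker term $\delta_{k0}\|\Pi_W u-u\|_{\mathcal T_h}$ being exactly $O(h)$ when $k=0$), so $\bm{\mathscr G},\bm{\mathscr H}\le Ch^{k+1}$. For $\bm{\mathscr L}$, the leading contribution is the factor $h^{\min\{k,1\}}$ multiplying $O(h^{k+1})$ projection errors, giving $O(h^{k+1+\min\{k,1\}})$; every other term — the $h$-weighted projection errors, the interpolation terms, the term $h\|\nabla(u-\Pi_{k+1}u)\|_{\mathcal T_h}$, and, when $k=0$, the $\delta_{k0}$ term — is of order at least $h^{k+1+\min\{k,1\}}$, as a short case check for $k=0$ and $k\ge 1$ confirms. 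Because each integrand is bounded uniformly on $[0,T]$ and $t\le T$, the time integrals inherit these orders; combining them with the pointwise projection terms $\|\bm q(t)-\bm\Pi_V\bm q(t)\|_{\mathcal T_h}$, $\|u(t)-\Pi_W u(t)\|_{\mathcal T_h}$, $\|u(t)-\Pi_{k+1}u(t)\|_{\mathcal T_h}$ and the initial-data terms in \Cref{main_err_qu} yields the three stated rates.

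The computation is essentially bookkeeping, so the only real subtlety — and the step I would treat most carefully — is verifying consistency of the superconvergent rate $h^{k+1+\min\{k,1\}}$ across the two regimes $k=0$ and $k\ge 1$: for $k\ge 1$ the factor $\min\{k,1\}=1$ turns the optimal $O(h^{k+1})$ errors into the gain $h^{k+2}$, matching $\|u-\Pi_{k+1}u\|_{\mathcal T_h}=O(h^{k+2})$, whereas for $k=0$ the exponent collapses to $k+1=1$ and one must confirm the $\delta_{k0}$ term does not degrade the estimate below $O(h)$. Once these cases are matched, the result shows no superconvergence for $k=0$, consistent with the remark preceding the theorem.
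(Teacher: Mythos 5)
Your proposal is correct and follows exactly the paper's route: the paper proves this corollary by simply citing \Cref{pro_error}, \Cref{lemmainter}, and \Cref{main_err_qu}, i.e., by substituting the optimal projection/interpolation rates into $\bm{\mathscr G}$, $\bm{\mathscr H}$, and $\bm{\mathscr L}$ and reading off the powers of $h$, which is precisely the bookkeeping you carry out (including the correct case check for $k=0$ versus $k\ge 1$).
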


\subsection{Error analysis under a local Lipschitz condition}
\label{local}
In applications, the nonlinearity $F$ might not satisfy the  global Lipschitz condition, see \Cref{gloablly_lip}. Instead, let 
\begin{align}\label{Max_value}
M = \max\{|u(t,x)|, x\in \overline{\Omega}, t\in [0,T]\} + 1.
\end{align}
In this section, we assume the mesh is quasi-uniform, the polynomial degree satisfies $k\ge 1$, and the nonlinearity $F$ satisfies the following local Lipschitz condition: 
\begin{assumption}\label{Locally_lip}
	There is a constant $L(M)>0$ such that
	\begin{align*}
	|F(u) - F(v)|_{\mathbb R}\le L(M) |u-v|_{\mathbb R}
	\end{align*}
	for all  $ u, v \in [-M, M]$. 
\end{assumption}
Our proof relies on techniques used in \cite{MR3403707}.  Below, we use the notation $ \rho_h^{u^\star} = \Pi_{k+1} u - u_h^\star $.

\begin{lemma}\label{theorem_err_u3local}
	If $h$ is small enough and $ k \geq 1 $, then there exists $t^*_h \in (0, T]$ such that \Cref{error_ana_q} and \Cref{supper} hold for all $t\in [0, t^*_h]$.
\end{lemma}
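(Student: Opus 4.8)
The plan is to run a continuation (bootstrap) argument that localizes the global Lipschitz analysis of \Cref{subsec:analysis_global_Lip} to a time interval on which the postprocessed approximation stays inside the interval $[-M,M]$ where \Cref{Locally_lip} applies. Concretely, I would define
\begin{align*}
t^*_h = \sup\{ t \in (0,T] : \|u_h^\star(s)\|_{L^\infty(\Omega)} \le M \text{ for all } s \in [0,t] \},
\end{align*}
and first verify that this set is nonempty so that $t^*_h$ is well defined and strictly positive. The point of this definition is that on $[0,t^*_h]$ every nodal value of $u_h^\star$ entering $\mathcal I_h F(u_h^\star)$ lies in $[-M,M]$, so only the local Lipschitz constant $L(M)$ is ever needed.

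To establish $t^*_h > 0$ I would control $u_h^\star(0)$. Since $\varepsilon_h^u(0)=0$, \Cref{super_con} (whose proof uses no property of $F$) bounds $\|\rho_h^{u^\star}(0)\|_{\mathcal T_h} = \|\Pi_{k+1}u(0)-u_h^\star(0)\|_{\mathcal T_h}$ by projection and interpolation errors, all of order $h^{k+1}$ for $k\ge 1$ (here $\delta_{k0}=0$ and $\|\varepsilon_h^{\bm q}(0)\|_{\mathcal T_h}\le\|(\bm\Pi_V\bm q-\bm q)(0)\|_{\mathcal T_h}$ by \Cref{error_ana_q}). The assumed quasi-uniformity supplies the inverse inequality $\|v_h\|_{L^\infty(\Omega)}\le C h^{-d/2}\|v_h\|_{\mathcal T_h}$ on $Z_h$; applying it to $\Pi_{k+1}u(0)-u_h^\star(0)$ and adding the standard $L^\infty$ bound for $u(0)-\Pi_{k+1}u(0)$ gives $\|u(0)-u_h^\star(0)\|_{L^\infty(\Omega)}\le C h^{k+1-d/2}$. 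For $k\ge 1$ and $d\le 3$ the exponent is positive, so for $h$ small this is below $1$, whence $\|u_h^\star(0)\|_{L^\infty(\Omega)}\le\|u(0)\|_{L^\infty(\Omega)}+1\le M$ (recall $M=\max|u|+1$). Continuity of $t\mapsto\|u_h^\star(t)\|_{L^\infty(\Omega)}$, which follows from the assumed existence and regularity of the semidiscrete solution on $[0,T]$, then yields $t^*_h>0$ and $\|u_h^\star\|_{L^\infty(\Omega)}\le M$ on all of $[0,t^*_h]$.

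The final step is the observation that on $[0,t^*_h]$ the local Lipschitz hypothesis is a full substitute for the global one everywhere it was used. Indeed $\Pi_{k+1}u$ also has nodal values in $[-M,M]$ for $h$ small, since $\|\Pi_{k+1}u\|_{L^\infty(\Omega)}\le\|u\|_{L^\infty(\Omega)}+\|u-\Pi_{k+1}u\|_{L^\infty(\Omega)}\le(M-1)+o(1)$. Hence every invocation of the Lipschitz property of $F$ in \Cref{non_est} — and therefore in the proofs of \Cref{error_ana_q} and \Cref{supper} — remains valid with $L$ replaced by $L(M)$. Since no other part of those proofs uses the global Lipschitz assumption, their conclusions hold verbatim for all $t\in[0,t^*_h]$, which is exactly the claim.

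The main obstacle is the circularity inherent in the argument: the estimates of \Cref{error_ana_q} and \Cref{supper} are precisely what one needs to keep $\|u_h^\star\|_{L^\infty(\Omega)}$ under control, yet those estimates are only licensed once $u_h^\star$ is already known to stay in $[-M,M]$. The device that breaks the circle is the a priori threshold defining $t^*_h$ together with the quasi-uniform inverse inequality, which is exactly why $k\ge 1$, quasi-uniformity, and small $h$ are imposed here. I would take care that all constants in the localized estimates depend only on $L(M)$, $T$, and the regularity of the exact solution, and never on $t^*_h$ itself, so that the resulting superconvergent bounds can later be fed back through the inverse inequality to upgrade $t^*_h$ to $T$.
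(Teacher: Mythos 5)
Your proposal is correct and follows essentially the same route as the paper: bound $\varepsilon_h^{\bm q}(0)$ and hence $\Pi_{k+1}u(0)-u_h^\star(0)$ using $\varepsilon_h^u(0)=0$ and \Cref{super_con}, convert to an $L^\infty$ bound via the quasi-uniform inverse inequality (where $k\ge 1$ makes the exponent positive), extend by continuity to a nontrivial interval $[0,t^*_h]$ on which $u$, $\Pi_{k+1}u$, and $u_h^\star$ all lie in $[-M,M]$, and then rerun the global-Lipschitz proofs with $L$ replaced by $L(M)$. The only cosmetic difference is that you threshold $\|u_h^\star\|_{L^\infty}$ directly while the paper thresholds $\|\Pi_{k+1}u-u_h^\star\|_{L^\infty}\le 1/2$, which is equivalent for the purposes of this lemma.
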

\begin{proof}
	Take $(\bm r_h,v_h,\widehat{v}_h)=(\varepsilon_h^{\bm q},\varepsilon_h^{u},\varepsilon_h^{\widehat u})$ in \eqref{error_u1} to give
	\begin{align}\label{error_q1_at_00}
	\begin{split}
	\hspace{2em}&\hspace{-2em} \frac 1 2\frac{d}{dt}\|\varepsilon_h^{u}\|^2_{\mathcal{T}_h}+ \|\varepsilon_h^{\bm q}\|^2_{\mathcal{T}_h}+\langle \tau(\varepsilon_h^u -\varepsilon_h^{\widehat{u}}),  \varepsilon_h^u -\varepsilon_h^{\widehat{u}}\rangle_{\partial{\mathcal{T}_h}} \\
	& = (\bm\Pi_V {\bm{q}} -\bm q, \varepsilon_h^{\bm q})_{\mathcal{T}_h}   - (F(u) - \mathcal I_h F(u_h^\star),\varepsilon_h^u)_{\mathcal T_h}.
	\end{split}
	\end{align}
	Take $t=0$ and use the fact $\varepsilon_h^u(0)  = 0$ to obtain
	\begin{align*}
	\|\varepsilon_h^{\bm q}(0)\|_{\mathcal T_h} \le C \|\bm\Pi_V {\bm{q}}(0) -\bm q(0)\|_{\mathcal T_h}.
	\end{align*}
	By \Cref{super_con}, we have 
	\begin{align*}
	\|\rho_h^{u^\star}(0)\|_{\mathcal T_h}\le C \| u(0) - \mathcal I_h u(0) \|_{\mathcal T_h} + Ch(\|\bm\Pi_V {\bm{q}}(0) -\bm q(0)\|_{\mathcal T_h} + \|\nabla(u(0) - \mathcal I_h u(0))\|_{\mathcal T_h}).
	\end{align*}
	The inverse inequality gives
	\begin{align*}
	\|\rho_h^{u^\star}(0)\|_{L^\infty(\Omega)} &\le C h^{-\frac  d 2}\| u(0) - \mathcal I_h u(0) \|_{\mathcal T_h}\\
	&\quad + Ch^{1-\frac  d 2}(\|\bm\Pi_V {\bm{q}}(0) -\bm q(0)\|_{\mathcal T_h} + \|\nabla(u(0) - \mathcal I_h u(0))\|_{\mathcal T_h}).
	\end{align*}
	Since the exact solution is smooth at $ t = 0 $, we can choose $ h $ small enough so that $ \|\rho_h^{u^\star}(0)\|_{L^\infty(\Omega)} < 1/2 $.  Also, since the error equation \eqref{error_u1} is continuous with respect to the time $t$, again using an inverse inequality shows that there exists $t^*_h \in(0, T]$ such that for all $ h $ small enough,
	\begin{align}\label{proof_error_u1}
	\|\rho_h^{u^\star}(t)\|_{L^\infty(\Omega)} \le 1/2  \quad  \mbox{for all $t\in [0, t^*_h]$.}
	\end{align}
	For all $h$ sufficiently small we have
	\begin{align}\label{eqn:u_L2_proj_error_small_enough}
	\| u(t) - \Pi_{k+1} u(t) \|_{L^\infty(\Omega)} \le 1/2  \quad  \mbox{for all $t\in [0, t^*_h]$.}
	\end{align}
	This implies for all $ h $ small enough and all $t\in [0, t^*_h]$,
	\begin{align*}
	  \|  \Pi_{k+1} u \|_{L^\infty}  &\leq  \| u \|_{L^\infty} + \| u - \Pi_{k+1} u \|_{L^\infty}  \leq  \| u \|_{L^\infty} + 1/2  \leq  M,\\
	  \|  u_h^\star \|_{L^\infty}  &\leq  \| \Pi_{k+1} u \|_{L^\infty} + \|  \Pi_{k+1} u - u_h^\star \|_{L^\infty}  \leq  ( \| u \|_{L^\infty} + 1/2 ) + 1/2  =  M.
	\end{align*}
	
	Therefore, $u$, $ \Pi_{k+1} u $, and $u_h^\star$ are located in the interval $[-M,M]$, where $M$ is defined in \eqref{Max_value}. This allows us to take advantage of the local Lipschitz condition of $F(u)$ for all $t \in [0, t^*_h]$. Hence, \Cref{error_ana_q} and \Cref{supper} hold for all $t\in [0, t^*_h]$.
\end{proof}

\begin{lemma}\label{theorem_err_u3_extend} 
	For $h$ small enough and $k\ge 1$, the conclusions of \Cref{error_ana_q} and \Cref{supper}  are true on the whole time interval $[0,T]$.
\end{lemma}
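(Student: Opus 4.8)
The plan is to run a continuation (bootstrap) argument in time. Define $t^*_h$ to be the largest time for which the pointwise bound $\|\rho_h^{u^\star}(t)\|_{L^\infty(\Omega)} \le 1/2$ holds on $[0, t^*_h]$; \Cref{theorem_err_u3local} guarantees that $t^*_h$ is well-defined and strictly positive for $h$ small. The goal is to show $t^*_h = T$. The mechanism is standard: on $[0, t^*_h]$ the local Lipschitz condition is available (because $u$, $\Pi_{k+1}u$, and $u_h^\star$ all lie in $[-M,M]$ there, as established in the proof of \Cref{theorem_err_u3local}), so \Cref{error_ana_q} and \Cref{supper} hold on $[0, t^*_h]$; these in turn force $\|\rho_h^{u^\star}\|_{L^\infty}$ to be \emph{strictly} smaller than the threshold $1/2$ at $t^*_h$, which together with continuity in $t$ lets us push past $t^*_h$ unless $t^*_h$ already equals $T$.

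To make this precise, first I would invoke \Cref{supper} on $[0, t^*_h]$ and insert the projection and interpolation estimates from \Cref{pro_error} and \Cref{lemmainter} into the right-hand side governed by $\bm{\mathscr L}$ in \eqref{def_L}. Since the exact solution is smooth on all of $[0,T]$, every term appearing in $\bm{\mathscr L}$ and in the initial-data contribution is $O(h^{k+2})$ for $k \ge 1$; crucially, the resulting constant depends only on the exact solution, on $T$, and on the local Lipschitz constant $L(M)$, and is therefore independent of $t^*_h$ and of $h$. This yields $\|\varepsilon_h^u(t)\|_{\mathcal T_h} \le C h^{k+2}$ on $[0, t^*_h]$. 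Combining this with \Cref{super_con} (using $\delta_{k0} = 0$ since $k \ge 1$, and bounding the remaining projection and interpolation terms by $O(h^{k+2})$) gives the superconvergent $L^2$ bound $\|\rho_h^{u^\star}(t)\|_{\mathcal T_h} \le C h^{k+2}$ on $[0, t^*_h]$.

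Next I would convert this into a pointwise bound by the inverse inequality, which is available because the mesh is quasi-uniform:
\begin{align*}
\|\rho_h^{u^\star}(t)\|_{L^\infty(\Omega)} \le C h^{-d/2} \|\rho_h^{u^\star}(t)\|_{\mathcal T_h} \le C h^{\,k+2-d/2}, \qquad t \in [0, t^*_h].
\end{align*}
For $k \ge 1$ and $d \in \{2,3\}$ the exponent $k + 2 - d/2$ is strictly positive, so for all $h$ sufficiently small this bound is at most $1/4$, uniformly on $[0, t^*_h]$ and in particular at $t = t^*_h$.

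Finally I would close the argument by contradiction. Suppose $t^*_h < T$. Since the semidiscrete solution is continuous (indeed smooth) in time, so is $t \mapsto \rho_h^{u^\star}(t)$ in $L^\infty(\Omega)$; because $\|\rho_h^{u^\star}(t^*_h)\|_{L^\infty(\Omega)} \le 1/4 < 1/2$, there exists $\delta > 0$ with $\|\rho_h^{u^\star}(t)\|_{L^\infty(\Omega)} \le 1/2$ on $[0, t^*_h + \delta]$, contradicting the maximality of $t^*_h$. Hence $t^*_h = T$, and the conclusions of \Cref{error_ana_q} and \Cref{supper} hold on all of $[0,T]$. The main obstacle, and the point that keeps the bootstrap from being circular, is ensuring the constant $C$ in the superconvergent estimate is independent of $t^*_h$: this is precisely why I would track that $C$ depends only on the fixed data ($L(M)$, $T$, and norms of the smooth exact solution), so that the improvement from the assumed threshold $1/2$ to the genuinely small $O(h^{k+2-d/2})$ bound is uniform in the a priori unknown length of the continuation interval.
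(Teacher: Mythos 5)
Your proposal is correct and follows essentially the same continuation argument as the paper: both define the maximal time $t^*_h$ for which the $L^\infty$ threshold bound on $\rho_h^{u^\star}$ holds, invoke \Cref{theorem_err_u3local} to obtain the superconvergent $L^2$ estimate on $[0,t^*_h]$ with constants independent of $t^*_h$, and then use the inverse inequality (quasi-uniformity) to show $\|\rho_h^{u^\star}(t^*_h)\|_{L^\infty(\Omega)} \le C h^{k+2-d/2} < 1/2$ for $h$ small, contradicting maximality. The paper packages the contradiction via the set of bad mesh sizes $\mathcal{A}$ and the bound $Ch^{3-d/2}$, but the mechanism and the key estimate are identical to yours.
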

\begin{proof}
	Fix $ h^* > 0 $ so that \eqref{proof_error_u1}, \eqref{eqn:u_L2_proj_error_small_enough}, and \Cref{theorem_err_u3local} are true for all $ h \leq h^* $, and assume $t^*_h$ is the largest value for which \eqref{proof_error_u1} is true for all $ h \leq h^* $.  Define the set $ \mathcal{A} = \{ h \in [0,h^*] : t^*_h \neq T \} $.  If the result is not true, then $ \mathcal{A} $ is nonempty, $ \inf \{ h : h\in \mathcal{A} \} = 0 $, and also
	\begin{align}\label{assum}
	\|\rho_h^{u^\star}(t^*_h)\|_{L^\infty(\Omega)} =1/2  \quad  \mbox{for all $ h \in \mathcal{A} $.}
	\end{align}
	However, by the inverse inequality, $k\ge 1$, and since \Cref{theorem_err_u3local} is true, we have
	\begin{align*}
	\|\rho_h^{u^\star}(t^*_h)\|_{L^{\infty}(\Omega)}  \le Ch^{-\frac d 2}\|\rho_h^{u^\star}(t^*_h)\|_{\mathcal T_h}\le C h^{3-d/2}  \quad  \mbox{for all $ h \in \mathcal{A} $.}
	\end{align*}
	Since $ C $ does not depend on $ h $, there exists $ h^*_1 \leq h^* $ such that $ \|\rho_h^{u^\star}(t^*_h)\|_{L^{\infty}(\Omega)}<1/2 $ for all $ h \in \mathcal{A} $ such that $ h \leq h^*_1 $.  This contradicts \eqref{assum}, and therefore $t^*_h = T$ for all $ h $ small enough.
\end{proof}
%

\begin{theorem}\label{main_err_qu2}
	If the nonlinearity $F$ satisfies the local Lipschitz condition in \Cref{Locally_lip}, the mesh is quasi-uniform, $ k \geq 1 $, and the assumptions at the beginning of \Cref{Error_analysis} hold, then for all $ h $ small enough the conclusions of \Cref{main_err_qu} and \Cref{res_coll} are true for all $0\le t\le T$.
\end{theorem}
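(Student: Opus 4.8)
The plan is to upgrade the global-Lipschitz conclusions of \Cref{main_err_qu} and \Cref{res_coll} to the local-Lipschitz setting by confining $u_h^\star$ to the interval $[-M,M]$ on which the bound of \Cref{Locally_lip} is available. The governing observation is that every estimate in \Cref{subsec:analysis_global_Lip} uses \Cref{gloablly_lip} only through a pointwise inequality of the form $|F(u) - F(u_h^\star)| \le L|u - u_h^\star|$ applied at the interpolation nodes defining $\mathcal I_h F(u_h^\star)$. Consequently, once one knows a priori that $u$, $\Pi_{k+1}u$, and $u_h^\star$ all take values in $[-M,M]$ for every $t\in[0,T]$, the constant $L(M)$ plays exactly the role of $L$, and the entire chain of lemmas culminating in \Cref{main_err_qu} applies verbatim. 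So the whole problem reduces to securing that $L^\infty$ confinement.

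The confinement is precisely what \Cref{theorem_err_u3local} and \Cref{theorem_err_u3_extend} deliver, so the first step is to invoke \Cref{theorem_err_u3_extend}: for all $h$ sufficiently small the conclusions of \Cref{error_ana_q} and \Cref{supper} hold on the full interval $[0,T]$. I would recall the mechanism for completeness. The superconvergent $L^2$ estimate of \Cref{supper} together with \Cref{super_con} gives $\|\rho_h^{u^\star}\|_{\mathcal T_h} \le Ch^{k+2}$, and on the quasi-uniform mesh the inverse inequality converts this into $\|\rho_h^{u^\star}(t)\|_{L^\infty(\Omega)} \le Ch^{3-d/2}$; here $k\ge 1$ is what raises the exponent to $3$, and $d\le 3$ makes $3-d/2>0$, so the bound is below $1/2$ for small $h$. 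A maximal-interval contradiction argument then forces $t_h^* = T$, and combined with \eqref{eqn:u_L2_proj_error_small_enough} this keeps $\Pi_{k+1}u$ and $u_h^\star$ inside $[-M,M]$ throughout $[0,T]$.

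With confinement secured on $[0,T]$, the local Lipschitz condition acts exactly as the global one, so all of the global-case lemmas hold on $[0,T]$, including \Cref{theorem_err_u3}; note that \Cref{error_ana_q}, whose validity on $[0,T]$ is already asserted by \Cref{theorem_err_u3_extend}, internally relies on the $\|\varepsilon_h^u\|$ bound of \Cref{theorem_err_u3}, so this estimate is automatically available. The proof then finishes exactly as in the global case: the splittings $u - u_h = (u - \Pi_W u) + \varepsilon_h^u$, $\bm q - \bm q_h = (\bm q - \bm\Pi_V \bm q) + \varepsilon_h^{\bm q}$, and $u - u_h^\star = (u - \Pi_{k+1}u) + \rho_h^{u^\star}$ together with the triangle inequality yield the three bounds of \Cref{main_err_qu}, and \Cref{pro_error} with \Cref{lemmainter} converts them into the rates of \Cref{res_coll}.

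The main obstacle is the apparent circularity in the continuation argument: the $L^\infty$ control of $u_h^\star$ needed to invoke the local Lipschitz bound is itself derived from error estimates that presuppose that bound. This is resolved by the bootstrap already carried out in \Cref{theorem_err_u3local} and \Cref{theorem_err_u3_extend}, where one assumes \eqref{proof_error_u1} on a maximal subinterval, derives the superconvergent estimate there under the local Lipschitz hypothesis, and then uses the inverse-inequality gain $h^{3-d/2}\to 0$ to exclude $t_h^*<T$. Once this is in place there is no remaining difficulty, and \Cref{main_err_qu2} follows by restricting the global-Lipschitz arguments to $[-M,M]$.
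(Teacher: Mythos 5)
Your proposal is correct and follows essentially the same route as the paper: Theorem~\ref{main_err_qu2} is obtained exactly by invoking Lemmas~\ref{theorem_err_u3local} and~\ref{theorem_err_u3_extend} to confine $u$, $\Pi_{k+1}u$, and $u_h^\star$ to $[-M,M]$ on all of $[0,T]$ via the $h^{3-d/2}$ inverse-inequality bootstrap, after which the local Lipschitz constant $L(M)$ substitutes for the global one and the global-case argument (triangle inequality plus Lemma~\ref{pro_error} and Lemma~\ref{lemmainter}) goes through unchanged. Your identification of the apparent circularity and its resolution by the maximal-interval contradiction argument matches the paper's reasoning precisely.
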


\section{Numerical Results}
\label{sec:numerics}

In this section, we present two examples to demonstrate the performance of the Interpolatory HDG$_k$  method.

\begin{example}[The Allen-Cahn or Chaffee-Infante equation]
	We begin with an example with an exact solution in order to illustrate the convergence theory.  The domain is the unit square $\Omega = [0,1]\times [0,1]\subset \mathbb R^2$, the nonlinear term is $F( u) = u^3-u$, and the source term $f$ is chosen so that the exact solution is $u = \sin(t)\sin(\pi x)\sin(\pi y)$.  Backward Euler and Crank-Nicolson are applied for the time discretization when $k=0$ and $k=1$, respectively, where $k$ is the degree of the polynomial.  The time step is chosen as $\Delta t = h$ when $k=0$ and $\Delta t   = h^2$ when $k=1$.  We report the errors at the final time $ T = 1 $ for polynomial degrees $ k = 0 $ and $ k = 1 $ in \Cref{table_1}.  The observed convergence rates match the theory.
	\begin{table}
		\small
		\caption{History of convergence.}\label{table_1}
		\centering
		Example 1: Errors for $\bm{q}_h$, $u_h$ and $u_h^\star$ of Interpolatory HDG$_k$ {
			\begin{tabular}{c|c|c|c|c|c|c|c}
				\Xhline{1pt}

				\multirow{2}{*}{Degree}
				&\multirow{2}{*}{$\frac{h}{\sqrt{2}}$}	
				&\multicolumn{2}{c|}{$\|\bm{q}-\bm{q}_h\|_{0,\Omega}$}	
				&\multicolumn{2}{c|}{$\|u-u_h\|_{0,\Omega}$}	
				&\multicolumn{2}{c}{$\|u-u_h^\star\|_{0,\Omega}$}	\\
				\cline{3-8}
				& &Error &Rate
				&Error &Rate
				&Error &Rate
				\\
				\cline{1-8}
				\multirow{5}{*}{ $k=0$}
				&$2^{-1}$	&1.2889	&	    &5.0344E-01	&	    &4.5836E-01	&\\
				&$2^{-2}$	&7.0471E-01	&0.87	&2.8491E-01	&0.82 	&2.5673E-01	&0.84\\
				&$2^{-3}$	& 3.5473E-01	&0.99	&1.5511E-01	&0.88 	&1.4105E-01	&0.86\\
				&$2^{-4}$	&1.7648E-01	&1.00	&8.0617E-02	&0.94 	&7.3725E-02	&0.94\\
				&$2^{-5}$	&8.7855E-02	&1.00 	&4.1025E-02	&0.97 	& 3.7627E-02	&0.97\\

				\cline{1-8}
				\multirow{5}{*}{$k=1$}
				
				&$2^{-1}$	&	3.7304E-01	& 	       &1.7028E-01	& 	      &3.0236E-02	    &\\
				&$2^{-2}$	&9.9820E-02	&1.90 	&4.8288E-02	&1.82 	&3.9074E-03	&2.95\\
				&$2^{-3}$	&2.5307E-02	&1.98 	&1.2561E-02	&1.94 	&4.7940E-04	&3.02\\
				&$2^{-4}$	&6.3422E-03	&2.00 	&3.1825E-03	&1.98	&5.9047E-05	&3.02\\
				&$2^{-5}$	&1.5858E-03	&2.00 	&7.9966E-04	&2.00 	&7.3168E-06	&3.01\\

				\Xhline{1pt}

			\end{tabular}
		}
	\end{table}
\end{example}

\begin{example}[The Schnakenberg model]
Next, we consider a more complicated example of a reaction diffusion PDE system with zero Neumann boundary conditions that does not satisfy the assumptions for the convergence theory established here.  We consider such an example to demonstrate the applicability of the Interpolatory HDG$_k$ method to more general problems.  

Specifically, we consider the Schnakenberg model, which has been used to model the spatial distribution of a morphogen; see \cite{MR2511741} for more details. The Schnakenberg system has the form
\begin{align*}
\frac{\partial C_a}{\partial t} & = D_1\nabla^2 C_a + \kappa(a - C_a  +C_a^2 C_i),\\
\frac{\partial C_i}{\partial t} & = D_2\nabla^2 C_i+ \kappa(b - C_a^2 C_i),
\end{align*}
with initial conditions
\begin{align*}
C_a(\cdot,0) & =a+b + 10^{-3}\exp\big(-100((x-1/3)^2 + (y-1/2)^2)\big),\\
C_i(\cdot,0)& = \frac {b}{(a+b)^2},
\end{align*}
and homogeneous Neumann boundary conditions. The parameter values are $\kappa = 100$, $a = 0.1305$, $b = 0.7695$, $D_1 = 0.05$, and $D_2 = 1$. We choose polynomial degree $k=1$ and  apply Crank-Nicolson for the time discretization with time step $\Delta t = 0.001$.

We vary the spatial domain, but keep all of parameters in the model unchanged. The first domain is the unit square $\Omega = [0,1]\times [0,1]$, and the domain is partitioned into $2048$ elements.  The second domain is the circle $\Omega = \{(x,y) : (x-0.5)^2 + (y - 0.5)^2 <0.5^2\}$ and we use $7168$ elements.  

Numerical results are shown in \Cref{fig:rd_system_square}--\Cref{fig:rd_system_circle}.  Spot patterns form on the square and circular domains.  Our numerical results are very similar to results reported in \cite{MR2511741}.
%
\begin{figure}
	\centerline{
		\hbox{\includegraphics[height=2.5in]{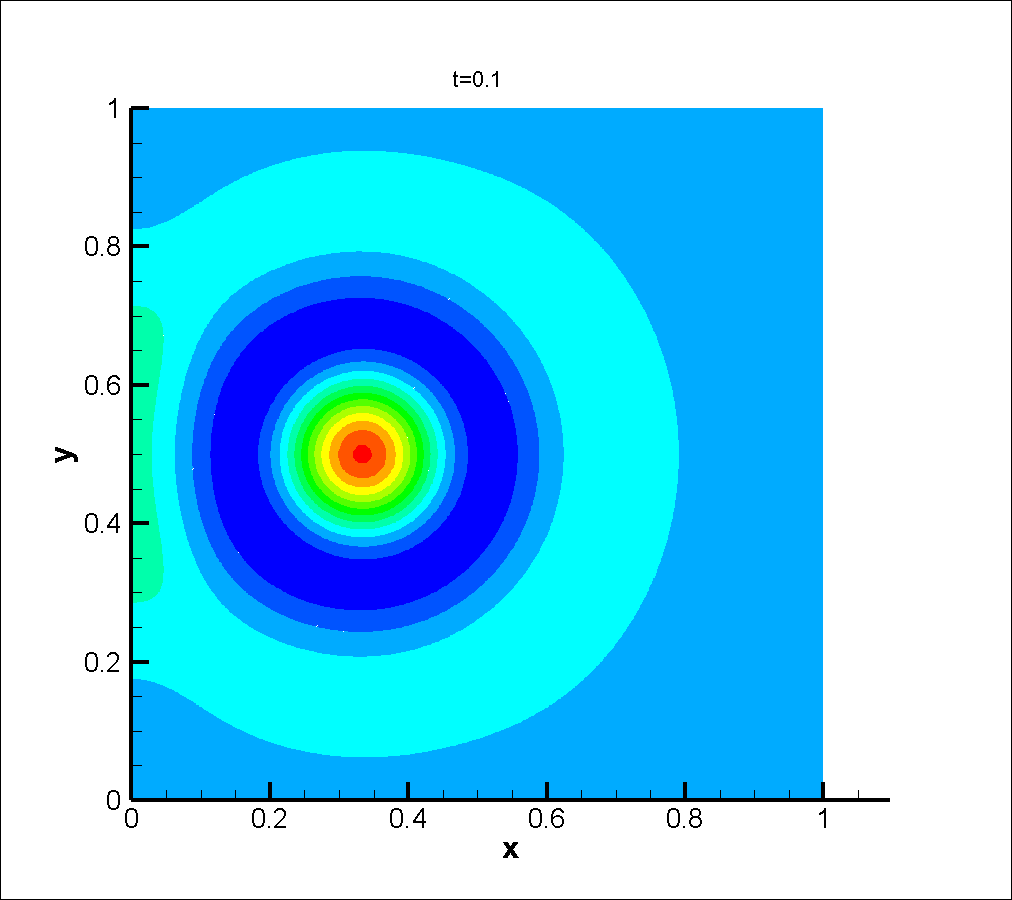}}
		\hbox{\includegraphics[height=2.5in]{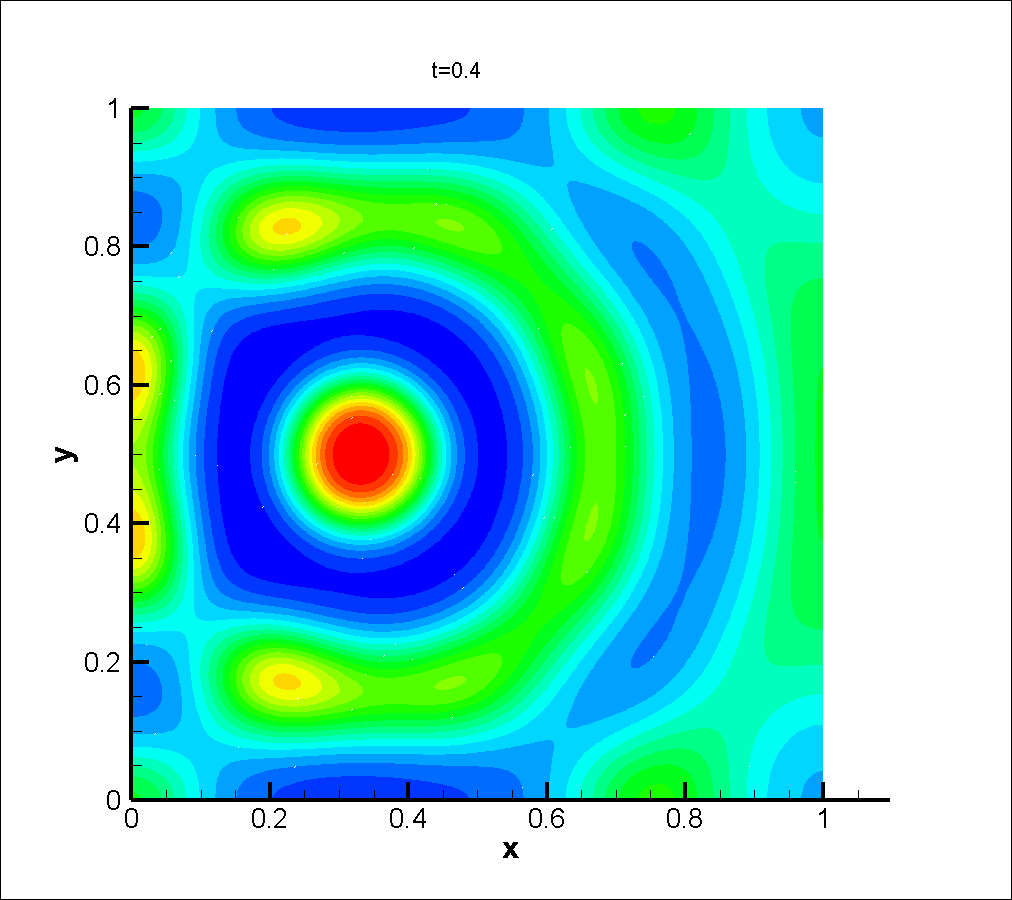}}
	}
	\centerline{
	\hbox{\includegraphics[height=2.5in]{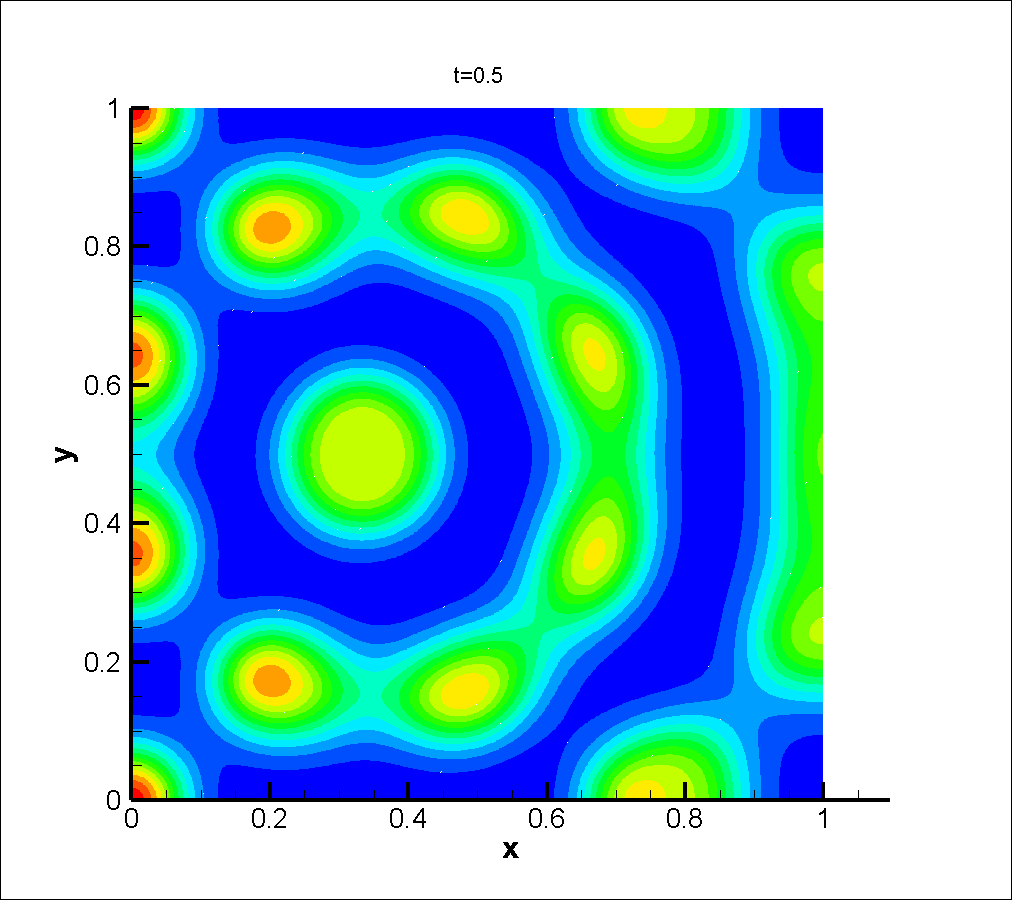}}
	\hbox{\includegraphics[height=2.5in]{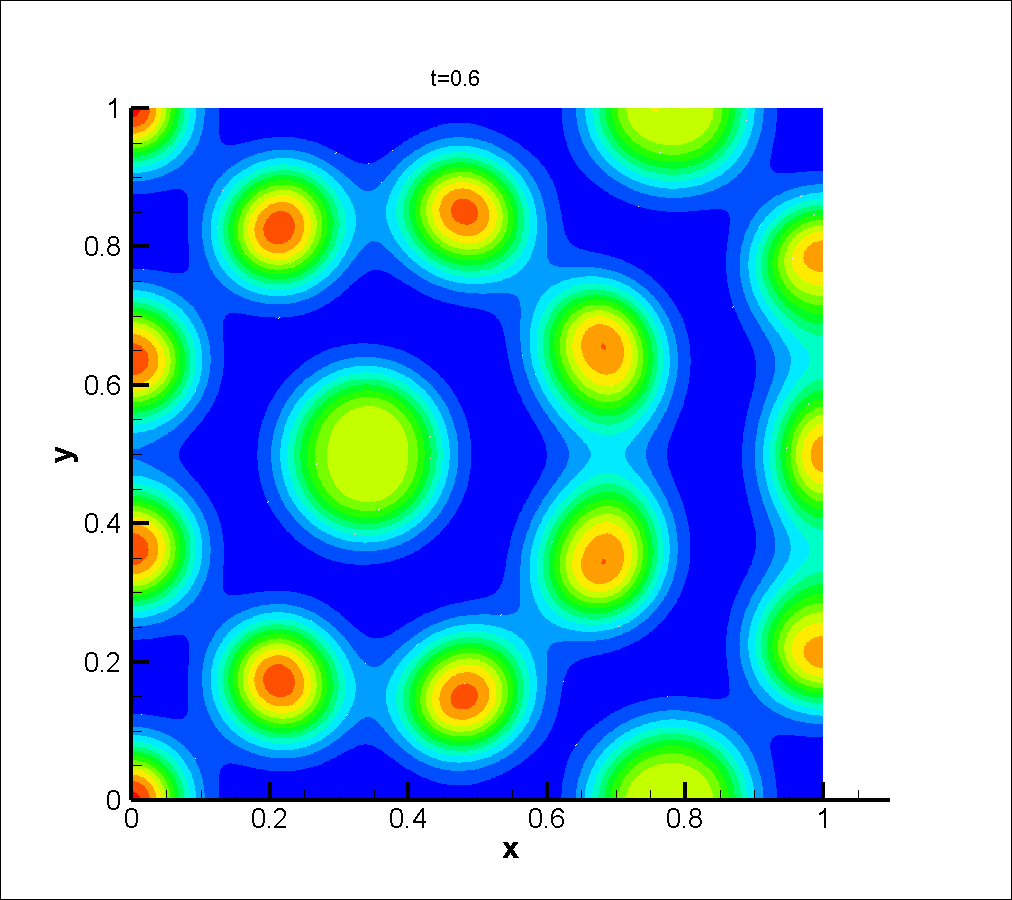}}
}
	\centerline{
	\hbox{\includegraphics[height=2.5in]{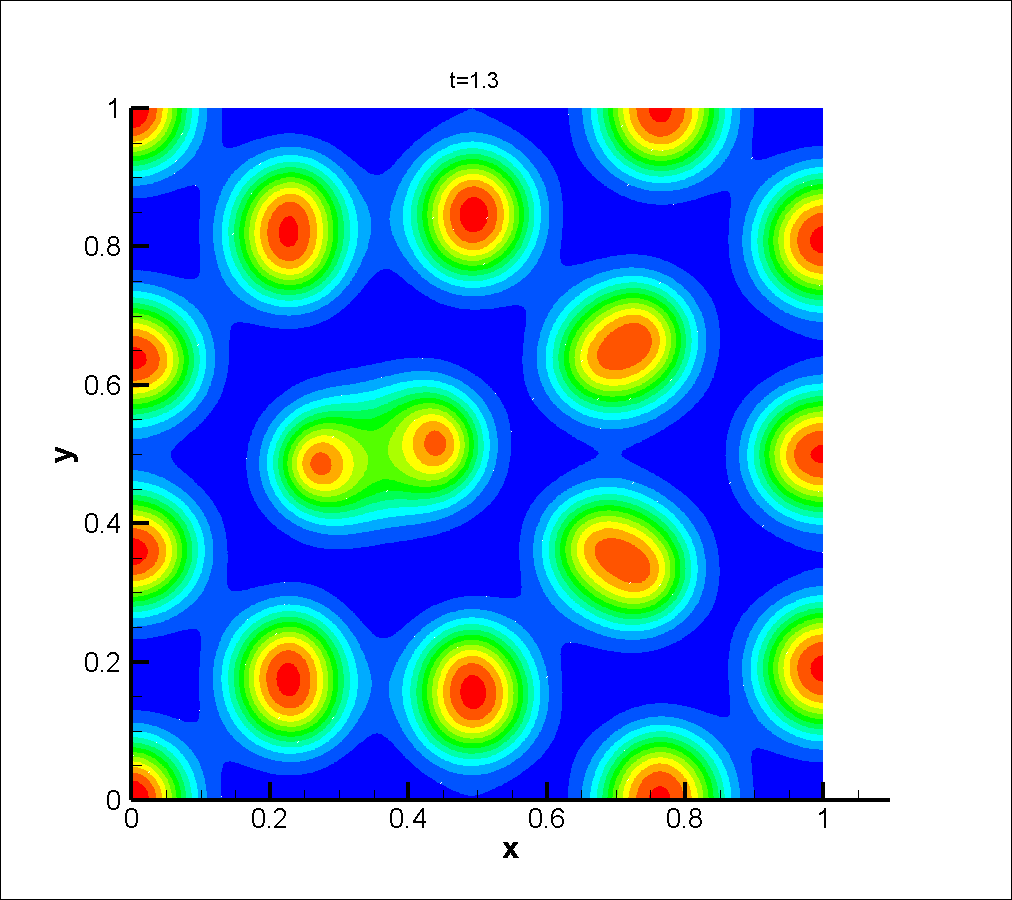}}
	\hbox{\includegraphics[height=2.5in]{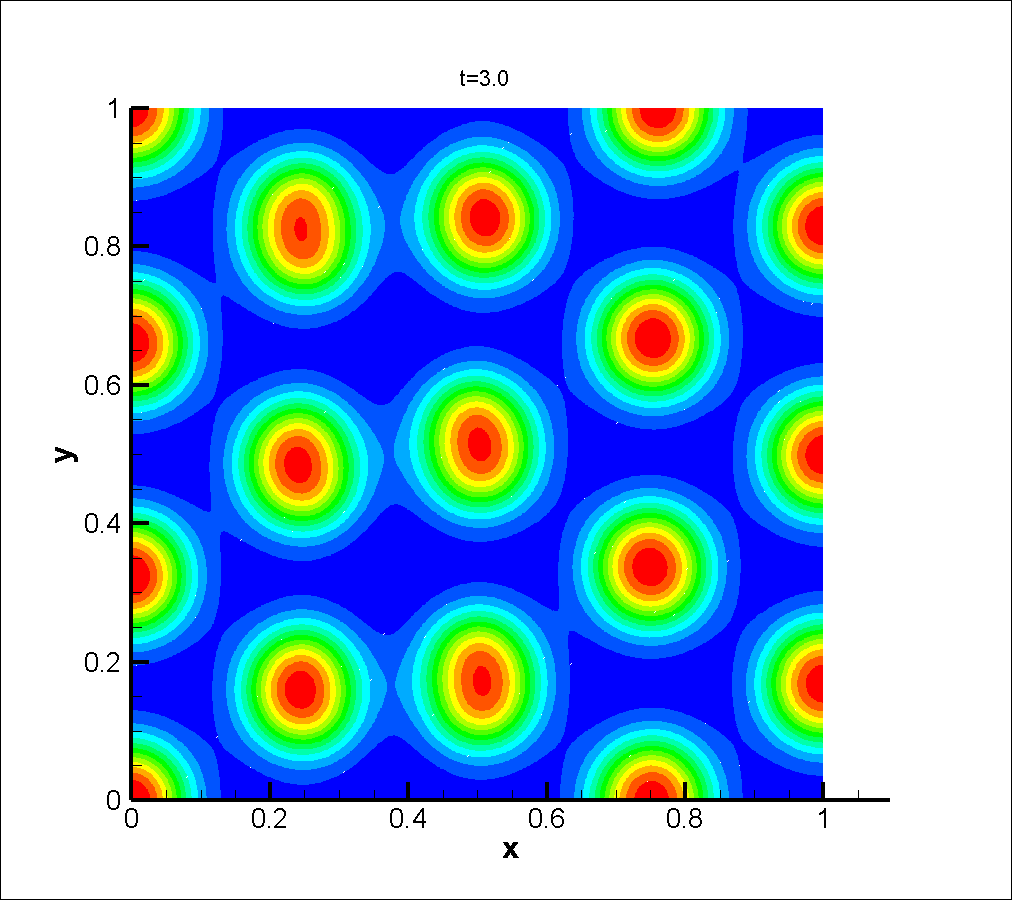}}
}

	\caption{\label{fig:rd_system_square} Contour plots of the time evolution of the concentration of the activator $C_a$ on the unit square.}
	\label{squre}
	\centering
\end{figure}

\begin{figure}
	\centerline{
		\hbox{\includegraphics[height=2.5in]{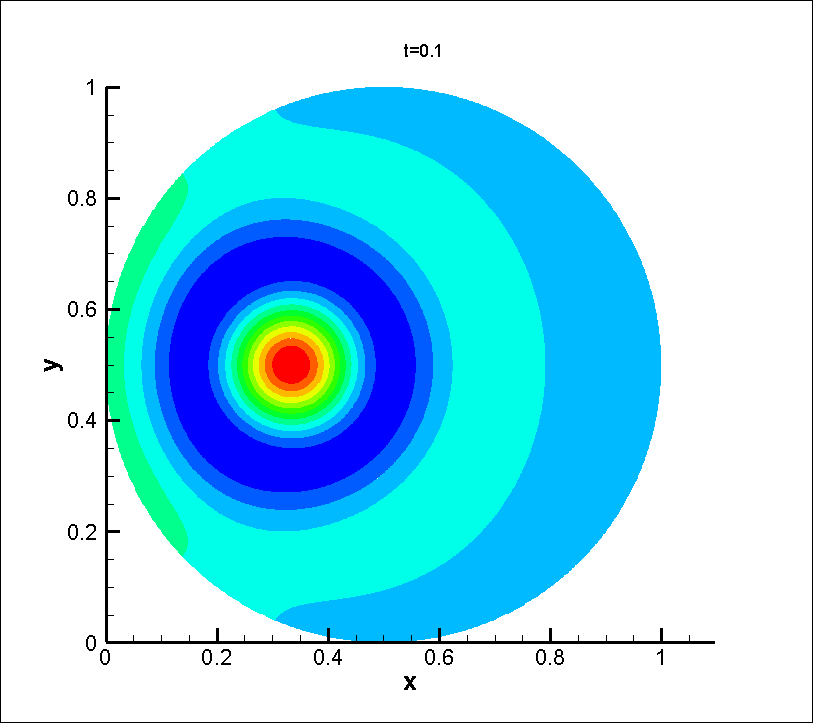}}
		\hbox{\includegraphics[height=2.5in]{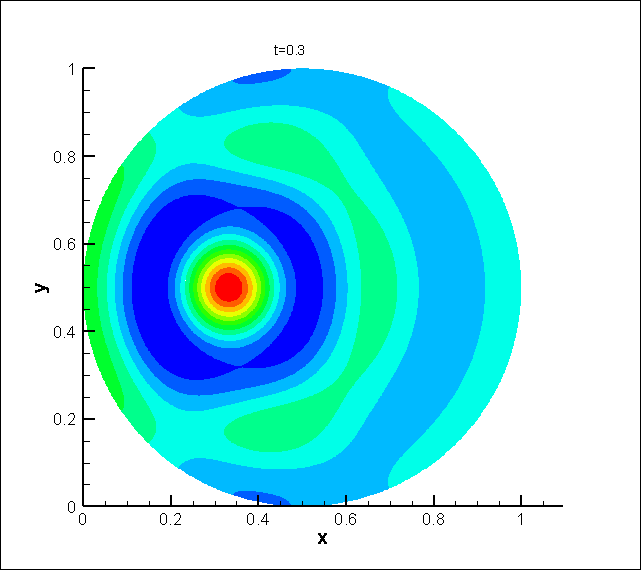}}
	}
	\centerline{
		\hbox{\includegraphics[height=2.5in]{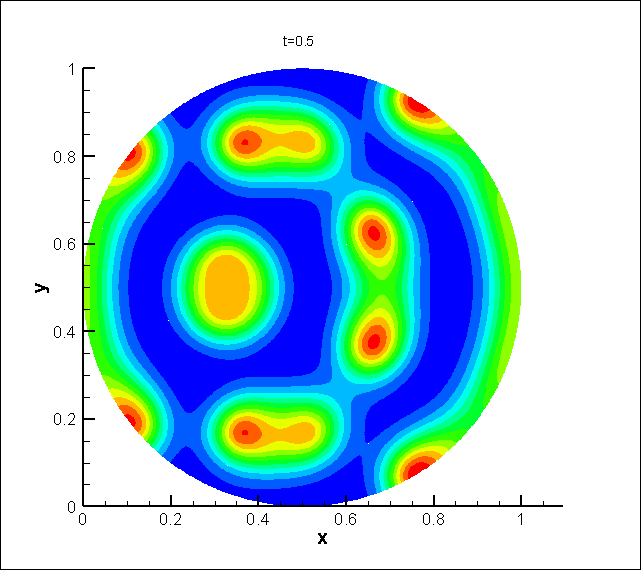}}
     	\hbox{\includegraphics[height=2.5in]{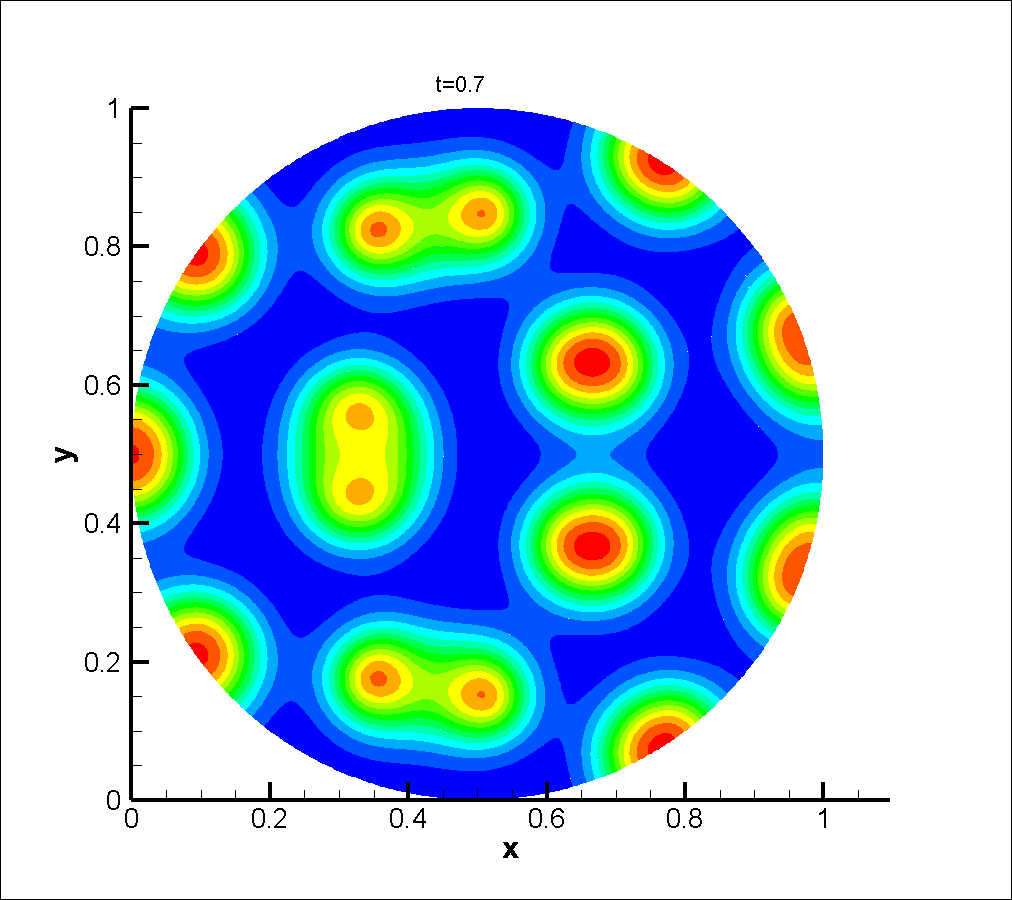}}
	}
	\centerline{
	}

	\centerline{
	\hbox{\includegraphics[height=2.5in]{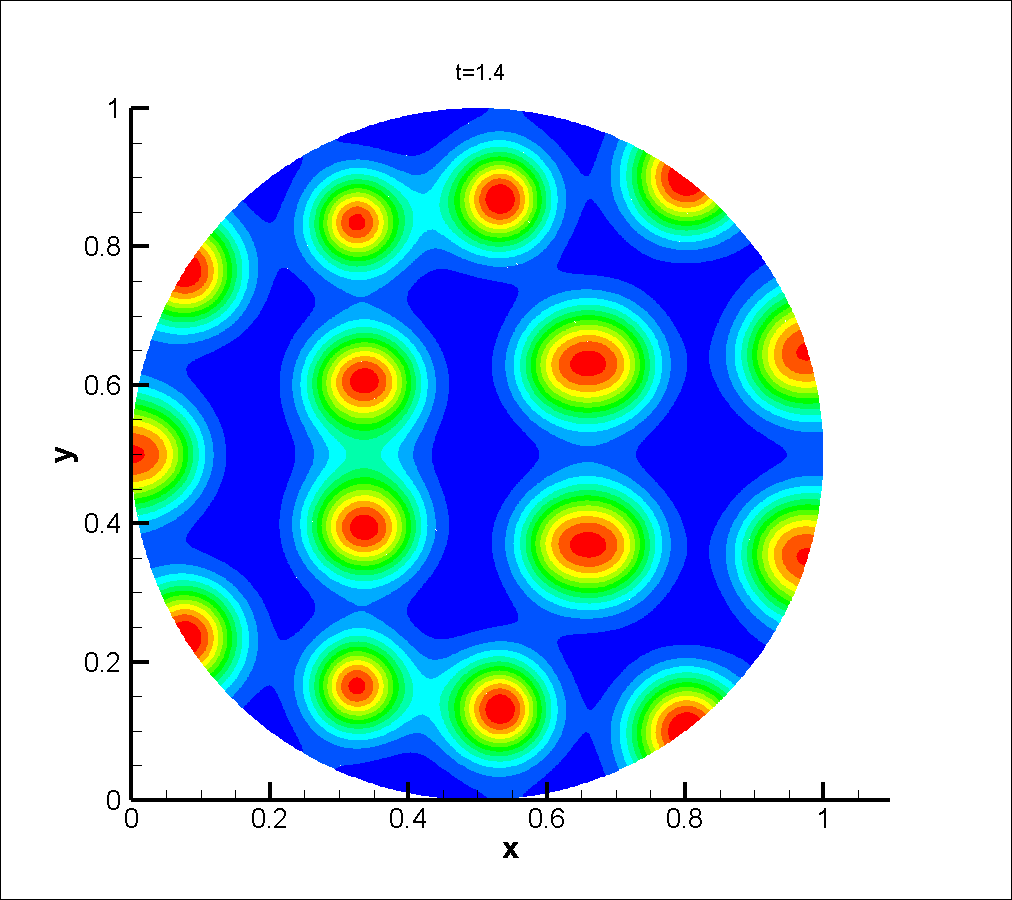}}
	\hbox{\includegraphics[height=2.5in]{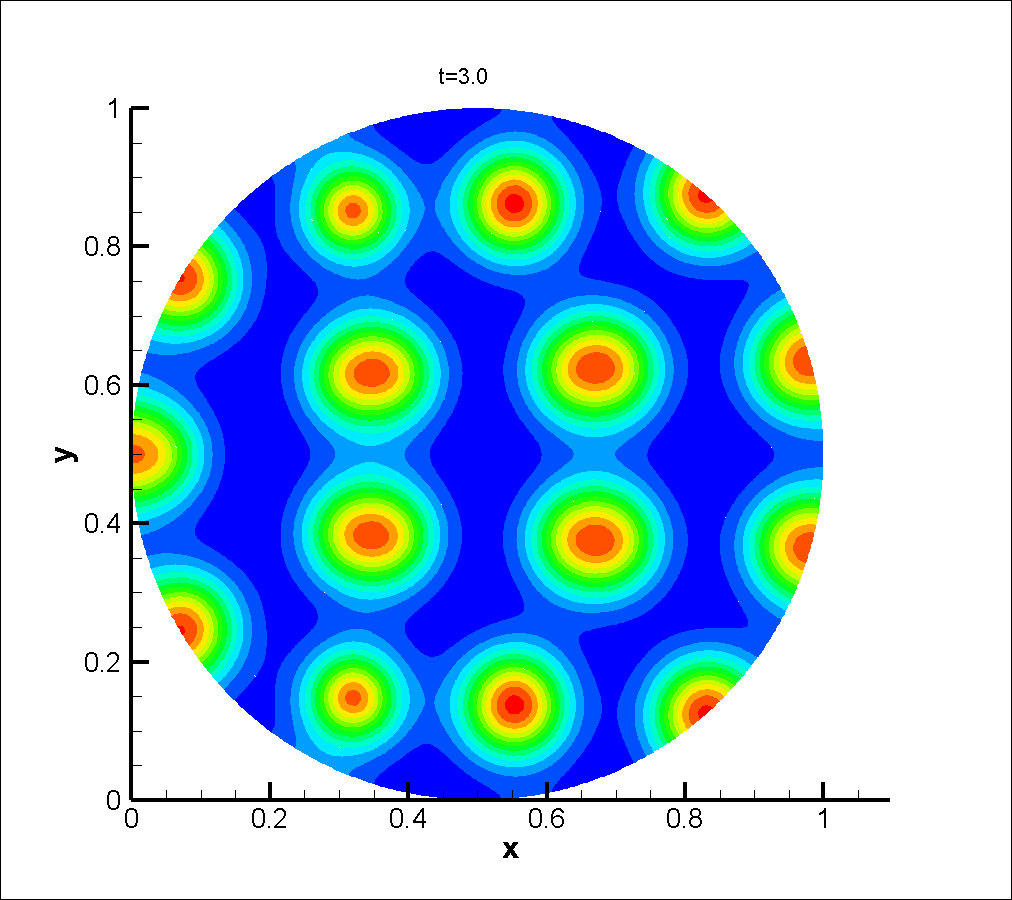}}
}

		\caption{\label{fig:rd_system_circle} Contour plots of the time evolution of the concentration of the activator $C_a$ on the circular domain.}
	\label{circle}
	\centering
\end{figure}

%
%
%

\end{example}

\section{Conclusion}

In our earlier work \cite{CockburnSinglerZhang1}, we considered an Interpolatory HDG$_k$ methods for semilinear parabolic PDEs with a general nonlinearity of the form $F(\nabla u, u)$.  The interpolatory approach achieves optimal convergence rates and reduces the computational cost compared to standard HDG since all of the HDG matrices are assembled once before the time stepping procedure.  However, the method does not have superconvergence by postprocessing.

In this work, we proposed a new superconvergent Interpolatory HDG$_k$  method for approximating the solution of reaction diffusion PDEs. Unlike our earlier Interpolatory HDG$_k$ work \cite{CockburnSinglerZhang1}, the new method uses a postprocessing $u_h^\star$ to evaluate the nonlinear term.  This change provides the superconvergence, and the new method also keeps all of the computational advantages of using an interpolatory approach for the nonlinear term.  We proved the superconvergence under a global Lipschitz condition for the nonlinearity, and then extended the superconvergence results to a local Lipschitz condition assuming the mesh is quasi-uniform.

In the second part of this work \cite{ChenCockburnSinglerZhang2}, we again consider reaction diffusion equations and extend the ideas here to derive other superconvergent interpolatory HDG methods inspired by hybrid high-order methods \cite{MR3507267}.  However, it is currently not clear whether the present approach can be used to obtain the superconvergence for semilinear PDEs with a general nonlinearity $F(\nabla u, u)$.  We are currently exploring this issue.

\section{Appendix A}\label{AppendixA}

Recall the steady state problem \eqref{HDGO_a2} from \Cref{subsec:superconv_duality_argument}, which we repeat here for convenience: let $(\overline{\bm q}_h,\overline{u}_h,\widehat{\overline u}_h)\in \bm V_h\times W_h\times M_h$ be the solution of
\begin{align}\label{HDGO_aa2}
\mathscr B (\overline{\bm q}_h,\overline{u}_h,\widehat{\overline u}_h, \bm r_h, v_h, \widehat v_h )= (f - \Pi_W u_t - F(u),v_h)_{\mathcal{T}_h},
\end{align}
for all $(\bm r_h,v_h,\widehat{v}_h)\in \bm V_h\times W_h\times M_h$.  Since $ \Pi_W $ commutes with the time derivative, taking the partial derivative of \eqref{HDGO_aa2} with respect to $t$ shows $(\partial_t \overline{\bm q}_h,\partial_t\overline{u}_h,\partial_t\widehat{\overline u}_h)\in \bm V_h\times W_h\times M_h$ is the solution of
\begin{align}\label{HDGO_3}
\mathscr B (\partial_t\overline{\bm q}_h,\partial_t\overline{u}_h,\partial_t\widehat{\overline u}_h, \bm r_h, v_h, \widehat v_h )&= (f_t - \Pi_W u_{tt} - F'(u)u_t,v_h)_{\mathcal{T}_h},
\end{align}
for all $(\bm r_h,v_h,\widehat{v}_h)\in \bm V_h\times W_h\times M_h$.

The proof of the following lemma is very similar to a proof in \cite{MR2629996}, hence we omit it here. 
\begin{lemma}\label{error_u1_appendix}
	For $\varepsilon_h^{\overline{\bm q}}=\bm{\Pi}_{V}\bm q- \overline{\bm q}_h $, $ \varepsilon_h^{ \overline u}=\Pi_{W} {u}-\overline u_h $, and $ \varepsilon_h^{ \widehat{\overline u}}=P_M u-\widehat{\overline u}_h$, we have
	\begin{align}\label{error_u11appendix}
	\mathscr B(\varepsilon_h^{\overline{\bm q}},\varepsilon_h^{\overline u},\varepsilon_h^{\widehat {\overline u}}; \bm r_h, w_h, \widehat v_h) =  (\bm \Pi_V \bm{q} - \bm q,\bm{r}_h)_{\mathcal{T}_h}+ (\Pi_W u_t - u_t , v_h)_{\mathcal{T}_h},
	\end{align}
	for all $(\bm r_h,v_h,\widehat v_h)\in \bm V_h\times W_h\times M_h$.
\end{lemma}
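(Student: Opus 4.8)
The plan is to follow the computation in the proof of \Cref{error_u3} almost verbatim, with the steady-state equation \eqref{HDGO_aa2} playing the role that the evolution equation \eqref{HDGO-B} played there. Because the form $\mathscr B$ defined in \eqref{def_B} is bilinear, I would first split
\[
\mathscr B(\varepsilon_h^{\overline{\bm q}},\varepsilon_h^{\overline u},\varepsilon_h^{\widehat{\overline u}}; \bm r_h, v_h, \widehat v_h)
= \mathscr B(\bm\Pi_V\bm q,\Pi_W u,P_M u; \bm r_h, v_h, \widehat v_h)
- \mathscr B(\overline{\bm q}_h,\overline u_h,\widehat{\overline u}_h; \bm r_h, v_h, \widehat v_h),
\]
which reduces the lemma to evaluating $\mathscr B(\bm\Pi_V\bm q,\Pi_W u,P_M u;\cdot)$, since the second term on the right is exactly the left-hand side of the defining equation \eqref{HDGO_aa2}.

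For the first term, I would insert the exact solution by writing $\bm\Pi_V\bm q = (\bm\Pi_V\bm q - \bm q) + \bm q$ and $\Pi_W u = (\Pi_W u - u) + u$ in \eqref{def_B}. The block coming from the exact pair $(\bm q, u)$ collapses after the element-wise integration-by-parts identity $(\nabla u,\bm r_h)_K = -(u,\nabla\cdot\bm r_h)_K + \langle u,\bm r_h\cdot\bm n\rangle_{\partial K}$ together with $\bm q = -\nabla u$, leaving $(\bm\Pi_V\bm q - \bm q, \bm r_h)_{\mathcal T_h} + (\nabla\cdot\bm q, v_h)_{\mathcal T_h}$ plus the projection-error contributions. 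These latter contributions are annihilated by the three defining properties of the HDG$_k$ projection: $(\nabla\cdot(\bm\Pi_V\bm q - \bm q), v_h)_K$ integrates by parts using \eqref{projection_operator_1} (since $\nabla v_h \in [\mathcal P_{k-1}(K)]^d$) to a boundary term, which combines with $-\langle(\bm\Pi_V\bm q - \bm q)\cdot\bm n,\widehat v_h\rangle$ and $\langle\tau(\Pi_W u - u), v_h - \widehat v_h\rangle$ into $\langle(\bm\Pi_V\bm q - \bm q)\cdot\bm n + \tau(\Pi_W u - u),\, v_h - \widehat v_h\rangle_{\partial\mathcal T_h}$, which vanishes by \eqref{projection_operator_3} because $v_h|_e, \widehat v_h|_e \in \mathcal P^k(e)$. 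Finally I would use the PDE \eqref{semilinear_pde1}, i.e.\ $\nabla\cdot\bm q = f - \partial_t u - F(u)$, to obtain
\[
\mathscr B(\bm\Pi_V\bm q,\Pi_W u,P_M u;\bm r_h, v_h, \widehat v_h)
= (\bm\Pi_V\bm q - \bm q, \bm r_h)_{\mathcal T_h} + (f - \partial_t u - F(u), v_h)_{\mathcal T_h}.
\]
Subtracting the right-hand side $(f - \Pi_W u_t - F(u), v_h)_{\mathcal T_h}$ of \eqref{HDGO_aa2}, the $f$ and $F(u)$ terms cancel and what survives is $(\bm\Pi_V\bm q - \bm q, \bm r_h)_{\mathcal T_h} + (\Pi_W u_t - u_t, v_h)_{\mathcal T_h}$, which is the claimed identity \eqref{error_u11appendix}.

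The routine but error-prone step is the boundary-term bookkeeping in the second block, where one must recombine the face integrals so that the single projection property \eqref{projection_operator_3} applies; this is precisely the maneuver already used in \Cref{error_u3}. The only genuinely new feature, compared with the evolution case, is the accounting of the time derivative: both $\mathscr B$ and the steady-state problem \eqref{HDGO_aa2} contain no explicit time derivative, so $\Pi_W u_t$ enters solely through the prescribed right-hand side of \eqref{HDGO_aa2}, while $u_t = \partial_t u$ enters through the PDE substitution; the point to be careful about is that these two terms do not cancel but instead combine into $(\Pi_W u_t - u_t, v_h)_{\mathcal T_h}$. I do not expect any real obstacle, consistent with the authors' remark that the argument parallels one in \cite{MR2629996}.
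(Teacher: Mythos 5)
Your proposal is correct and follows essentially the same route the paper intends: the paper omits this proof (deferring to \cite{MR2629996}), but the identical computation of $\mathscr B(\bm\Pi_V\bm q,\Pi_W u,P_M u;\cdot)$ via the three defining properties of the HDG$_k$ projection, the $P_M$ property, and the substitution $\nabla\cdot\bm q = f - \partial_t u - F(u)$ already appears in the proof of \Cref{error_u3}, and your steady-state version with the right-hand side of \eqref{HDGO_aa2} subtracted off is exactly the intended argument. The bookkeeping you flag as routine (absorbing $\langle \bm q\cdot\bm n,\widehat v_h\rangle_{\partial\mathcal T_h}=0$ and replacing $P_M u$ by $u$ in the stabilization term) is handled the same way there, so no gap remains.
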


The next step is the consideration of the dual problem \eqref{Dual_PDE1_assumption}, which we again repeat for convenience:  Let 
\begin{equation}\label{Dual_PDE1}
\begin{split}
\bm{\Phi}+\nabla\Psi&=0\qquad\qquad\text{in}\ \Omega,\\
\nabla\cdot\bm \Phi   &=\Theta\qquad\quad~~\text{in}\ \Omega,\\
\Psi &= 0\qquad\qquad\text{on}\ \partial\Omega.
\end{split}
\end{equation}
By the assumption at the beginning of \Cref{Error_analysis}, this boundary value problem admits the regularity estimate
\begin{align}\label{regularity_PDE}
\|\bm \Phi\|_{H^{1}(\Omega)} + \|\Psi\|_{H^{2}(\Omega)} \le C   \|\Theta\|_{L^{2}(\Omega)},
\end{align}
for all $\Theta \in L^2(\Omega)$.

\begin{lemma}\label{dual_ar}
	We have
	\begin{align*}
	\|\varepsilon^{\overline u}_h\|_{\mathcal{T}_h} &\le  Ch^{\min\{k,1\}} (\|\bm q - \bm\Pi_V \bm q\|_{\mathcal T_h}+\|u_t - \Pi_W u_t\|_{\mathcal T_h})\\
	\|\varepsilon^{\overline {\bm q}}_h\|_{\mathcal{T}_h} &\le  C (\|\bm q - \bm{\Pi}_V \bm q\|_{\mathcal T_h} +\|u_t - \Pi_W u_t\|_{\mathcal T_h}).
	\end{align*}
\end{lemma}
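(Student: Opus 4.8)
The plan is to recognize the system \eqref{error_u11appendix} as the error equations of a standard steady-state HDG$_k$ discretization of the self-adjoint diffusion problem whose consistency data are the primal projection residuals $\bm\Pi_V\bm q-\bm q$ and $\Pi_W u_t-u_t$. The two bounds are then precisely the classical HDG$_k$ energy and $L^2$-superconvergence estimates, as in \cite{MR2629996}, which I would reproduce in three steps plus a closing absorption.

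First (energy estimate for the flux), I would take $(\bm r_h,v_h,\widehat v_h)=(\varepsilon_h^{\overline{\bm q}},\varepsilon_h^{\overline u},\varepsilon_h^{\widehat{\overline u}})$ in \eqref{error_u11appendix}. The skew-symmetric coupling terms of $\mathscr B$ cancel, leaving $\|\varepsilon_h^{\overline{\bm q}}\|_{\mathcal T_h}^2+\|\sqrt\tau(\varepsilon_h^{\overline u}-\varepsilon_h^{\widehat{\overline u}})\|_{\partial\mathcal T_h}^2=(\bm\Pi_V\bm q-\bm q,\varepsilon_h^{\overline{\bm q}})_{\mathcal T_h}+(\Pi_W u_t-u_t,\varepsilon_h^{\overline u})_{\mathcal T_h}$. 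Cauchy--Schwarz and Young's inequality then control the flux error and the stabilization seminorm of the error by $C(\|\bm\Pi_V\bm q-\bm q\|_{\mathcal T_h}+\|\Pi_W u_t-u_t\|_{\mathcal T_h}+\|\varepsilon_h^{\overline u}\|_{\mathcal T_h})$; the last, non-superconvergent term is what the duality argument must remove.

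Second (duality for the scalar), I would set $\Theta=\varepsilon_h^{\overline u}$ in \eqref{Dual_PDE1}, so that $\|\varepsilon_h^{\overline u}\|_{\mathcal T_h}^2=(\varepsilon_h^{\overline u},\nabla\cdot\bm\Phi)_{\mathcal T_h}$, and test \eqref{error_u11appendix} with the HDG projection $(\bm\Pi_V\bm\Phi,\Pi_W\Psi,P_M\Psi)$ of the dual solution. Using $\bm\Phi=-\nabla\Psi$ and integrating by parts, the orthogonality of the projection in \eqref{projection_operator_1}--\eqref{projection_operator_2} (applied to $\nabla\varepsilon_h^{\overline u}$ and $\nabla\cdot\varepsilon_h^{\overline{\bm q}}$, both piecewise of degree $k-1$) together with \eqref{L2_edge} annihilate the bulk pairings, and I expect to reach an identity in which $\|\varepsilon_h^{\overline u}\|_{\mathcal T_h}^2$ equals a sum of terms each carrying a dual projection error, namely $\bm\Phi-\bm\Pi_V\bm\Phi$, $\Psi-\Pi_W\Psi$, $\Psi-P_M\Psi$, or the difference $\Pi_W\Psi-P_M\Psi$, paired with a primal residual or an error quantity ($\varepsilon_h^{\overline{\bm q}}$ or the stabilization seminorm). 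The key point is that, since the dual is only $H^1\times H^2$ regular, \Cref{pro_error} and \Cref{lemmainter} deliver these dual errors at order $h^{\min\{k,1\}}$, while \eqref{regularity_PDE} turns $\|\bm\Phi\|_{H^1}+\|\Psi\|_{H^2}$ into $\|\Theta\|_{\mathcal T_h}=\|\varepsilon_h^{\overline u}\|_{\mathcal T_h}$. Equally important are the super-approximation steps $(\bm\Pi_V\bm q-\bm q,\bm\Pi_V\bm\Phi)=(\bm\Pi_V\bm q-\bm q,\bm\Phi-\Pi_{k-1}\bm\Phi)-(\bm\Pi_V\bm q-\bm q,\bm\Phi-\bm\Pi_V\bm\Phi)$ and its analogue for $\Pi_W u_t-u_t$, which exploit the projection orthogonality to pull an extra factor $h^{\min\{k,1\}}$ out of the primal-residual pairings. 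Collecting gives $\|\varepsilon_h^{\overline u}\|_{\mathcal T_h}\le Ch^{\min\{k,1\}}(\|\bm\Pi_V\bm q-\bm q\|_{\mathcal T_h}+\|\Pi_W u_t-u_t\|_{\mathcal T_h}+\|\varepsilon_h^{\overline{\bm q}}\|_{\mathcal T_h}+\|\sqrt\tau(\varepsilon_h^{\overline u}-\varepsilon_h^{\widehat{\overline u}})\|_{\partial\mathcal T_h})$.

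Finally I would close the two estimates simultaneously: substitute the energy bound for $\|\varepsilon_h^{\overline{\bm q}}\|_{\mathcal T_h}$ and the stabilization seminorm into the duality inequality, and (for $k\ge1$, using $h\le1$ and $h$ small where needed) absorb the resulting $h^{\min\{k,1\}}\|\varepsilon_h^{\overline u}\|_{\mathcal T_h}$ term into the left-hand side, yielding the superconvergent scalar estimate; re-inserting it into the energy estimate then gives the flux estimate. For $k=0$ the factor $h^{\min\{k,1\}}=1$ and the scalar bound is just the energy estimate, so no duality is needed. The main obstacle is the duality identity of the second step: performing the integration by parts so that the target $(\varepsilon_h^{\overline u},\nabla\cdot\bm\Phi)$ is isolated and every remaining term genuinely contains a dual projection error (so the extra $h^{\min\{k,1\}}$ is actually produced), which hinges on the face-term cancellations through the flux-plus-stabilization matching \eqref{projection_operator_3} and on careful sign bookkeeping.
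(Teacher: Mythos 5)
Your proposal follows essentially the same route as the paper: an energy identity obtained by testing \eqref{error_u11appendix} with the errors themselves, combined with an Aubin--Nitsche duality argument in which \eqref{Dual_PDE1} is tested against the HDG projection $(\pm\bm\Pi_V\bm\Phi,\Pi_W\Psi,P_M\Psi)$ of the dual solution, the projection orthogonalities and super-approximation extracting the factor $h^{\min\{k,1\}}$, and the two bounds closed by substitution and absorption (the paper does duality first and energy second, but this ordering is immaterial). One correction: your closing remark that for $k=0$ ``the scalar bound is just the energy estimate, so no duality is needed'' is wrong --- the energy identity controls only $\|\varepsilon_h^{\overline{\bm q}}\|_{\mathcal T_h}$ and the stabilization seminorm $\|\sqrt{\tau}(\varepsilon_h^{\overline u}-\varepsilon_h^{\widehat{\overline u}})\|_{\partial\mathcal T_h}$, not $\|\varepsilon_h^{\overline u}\|_{\mathcal T_h}$ itself, so the duality step is still required for $k=0$ (it simply yields the factor $h^{\min\{k,1\}}=1$; this is how the paper handles it, via the $\min\{k,1\}\Pi_0\Psi$ device). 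Note also that the coupling term in the duality identity, $(\varepsilon_h^{\overline{\bm q}},\bm\Phi-\bm\Pi_V\bm\Phi)_{\mathcal T_h}$, carries a genuine factor $h$ for every $k\ge 0$ because the dual solution is only $H^1\times H^2$, which is what makes the absorption work uniformly with $h\le 1$ and careful constants, without requiring $h$ small.
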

\begin{proof}
	Let $\Theta = \varepsilon_h^{\overline u}$ in the dual problem \eqref{Dual_PDE1}, and take $(\bm r_h,v_h,\widehat v_h) = (-\bm\Pi_V\bm{\Phi}, \Pi_W\Psi,P_M \Psi)$ in the definition of $ \mathscr{B} $ \eqref{def_B} to get
	\begin{align*}
	\mathscr B &(\varepsilon^{\overline{\bm q}}_h,\varepsilon^{\overline u}_h,\varepsilon^{\widehat {\overline u}}_h;-\bm\Pi_V\bm{\Phi},\Pi_W\Psi,P_M\Psi)\\
	&=-(\varepsilon^{\overline{\bm q}}_h,\bm\Pi_V\bm{\Phi})_{\mathcal{T}_h}+(\varepsilon^{\overline u}_h,\nabla\cdot\bm\Pi_V\bm{\Phi})_{\mathcal{T}_h}-\langle \varepsilon^{\widehat {\overline u}}_h, \bm\Pi_V\bm{\Phi}\cdot \bm{n}\rangle_{\partial\mathcal{T}_h}+ (\nabla\cdot \varepsilon^{\overline{\bm q}}_h,  \Pi_W\Psi)_{\mathcal{T}_h}\\
	&\quad -\langle \varepsilon^{\overline{\bm q}}_h\cdot \bm{n}, P_M \Psi \rangle_{\partial {\mathcal{T}_h}} + \left\langle\tau(\varepsilon^{\overline u}_h-\varepsilon^{\widehat {\overline u}}_h), \Pi_W \Psi - P_M\Psi \right\rangle_{\partial\mathcal{T}_h}\\
	&=-(\varepsilon^{\overline {\bm q}}_h,\bm{\Phi})_{\mathcal{T}_h}+(\varepsilon^{\overline{\bm q}}_h,\bm \Phi - \bm \Pi_V \bm\Phi)_{\mathcal{T}_h}+(\varepsilon^{\overline u}_h,\nabla\cdot\bm{\Phi})_{\mathcal{T}_h}-(\varepsilon^{\overline u}_h,\nabla\cdot (\bm \Phi - \bm \Pi_V \bm\Phi))_{\mathcal{T}_h}\\
	&\quad +\langle \varepsilon^{\widehat {\overline u}}_h,  (\bm \Phi - \bm \Pi_V \bm\Phi)\cdot \bm{n}\rangle_{\partial\mathcal{T}_h} + (\nabla\cdot \varepsilon^{\overline{\bm q}}_h, \Psi)_{\mathcal{T}_h} + (\nabla\cdot \varepsilon^{\overline{\bm q}}_h, \Pi_W \Psi - \Psi)_{\mathcal{T}_h}\\
	&\quad -\langle \varepsilon^{\overline{\bm q}}_h\cdot \bm{n},  \Psi \rangle_{\partial {\mathcal{T}_h}} + \left\langle\tau(\varepsilon^{\overline u}_h-\varepsilon^{\widehat {\overline u}}_h), \Pi_W \Psi - P_M\Psi \right\rangle_{\partial\mathcal{T}_h}\\
	&=(\varepsilon^{\overline{\bm q}}_h,\bm \Phi - \bm \Pi_V \bm\Phi)_{\mathcal{T}_h}  + \|\varepsilon^{\overline u}_h\|_{\mathcal T_h}^2.
	\end{align*}
	On the other hand, take $(\bm r_h,v_h,\widehat v_h) = (-\bm\Pi_V\bm{\Phi}, \Pi_W\Psi,P_M \Psi)$ in \eqref{error_u11appendix} to get
	\begin{align}\label{two_e}
	\mathscr B (\varepsilon^{\overline{\bm q}}_h,\varepsilon^{\overline u}_h,\varepsilon^{\widehat {\overline u}}_h;-\bm\Pi_V\bm{\Phi},\Pi_W\Psi,P_M\Psi)=  (\bm q - \bm\Pi_V \bm q,\bm\Pi_V\bm{\Phi})_{\mathcal T_h} +  (\Pi_W u_t - u_t , \Pi_W \Psi)_{\mathcal{T}_h}.
	\end{align}
	Comparing the above two equalities gives
	\begin{align*}
	\|\varepsilon^{\overline u}_h\|^2_{\mathcal{T}_h}
	&=-(\varepsilon^{\overline{\bm q}}_h,\bm \Phi - \bm \Pi_V \bm\Phi)_{\mathcal{T}_h}+ (\bm q - \bm{\Pi}_V \bm q,\bm\Pi_V\bm{\Phi})_{\mathcal T_h} +  (\Pi_W u_t - u_t , \Pi_W \Psi)_{\mathcal{T}_h} \\
	&=-(\varepsilon^{\overline{\bm q}}_h,\bm \Phi - \bm \Pi_V \bm\Phi)_{\mathcal{T}_h}+ (\bm q - \bm\Pi_V \bm q,\bm\Pi_V\bm{\Phi} - \bm \Phi)_{\mathcal T_h}  \\
	&\quad +  (\bm q - \bm\Pi_V \bm q,\bm \Phi)_{\mathcal T_h}  +  (\Pi_W u_t - u_t , \Pi_W \Psi)_{\mathcal{T}_h} \\
	&=-(\varepsilon^{\overline{\bm q}}_h,\bm \Phi - \bm \Pi_V \bm\Phi)_{\mathcal{T}_h}+ (\bm q - \bm\Pi_V \bm q,\bm\Pi_V\bm{\Phi} - \bm \Phi)_{\mathcal T_h}  \\
	&\quad -  (\bm q - \bm\Pi_V \bm q,\nabla \Psi)_{\mathcal T_h} +  (\Pi_W u_t - u_t , \Pi_W \Psi)_{\mathcal{T}_h} \\
	&=-(\varepsilon^{\overline{\bm q}}_h,\bm \Phi - \bm \Pi_V \bm\Phi)_{\mathcal{T}_h}+ (\bm q - \bm\Pi_V \bm q,\bm\Pi_V\bm{\Phi} - \bm \Phi)_{\mathcal T_h}  \\
	&\quad -  (\bm q - \bm\Pi_V \bm q,\nabla (\Psi - \Pi_W\Psi))_{\mathcal T_h} +  (\Pi_W u_t - u_t , \Pi_W \Psi - \min\{k,1\}\Pi_0 \Psi)_{\mathcal{T}_h}.
	\end{align*}
	Hence, by the regularity of the dual PDE \eqref{regularity_PDE}, we have 
	\begin{align}\label{errir_Rew}
	\|\varepsilon^{\overline u}_h\|^2_{\mathcal{T}_h} \le Ch^2 \|\varepsilon^{\overline{\bm q}}_h\|_{\mathcal T_h}^2 + Ch^{\min\{2k,2\}}\|\bm q - \bm{\Pi}_V \bm q\|_{\mathcal T_h}^2 + Ch^{\min\{2k,2\}}\|u_t - \Pi_W u_t\|_{\mathcal T_h}^2.
	\end{align}
	
	Next, take $(\bm r_h,v_h,\widehat{v}_h)=(\varepsilon_h^{\overline {\bm q}},\varepsilon_h^{\overline u},\varepsilon_h^{\widehat {\overline u}})$ in \eqref{error_u11appendix} to obtain
	\begin{align*}
	\hspace{1em}&\hspace{-1em}	\|\varepsilon_h^{\overline{\bm q}}\|^2_{\mathcal{T}_h}+\langle \tau(\varepsilon_h^{\overline u} -\varepsilon_h^{\widehat{ \overline u}}),  \varepsilon_h^{\overline u} -\varepsilon_h^{\widehat{ \overline u}} \rangle_{\partial{\mathcal{T}_h}}  \\
	&= (\bm\Pi_V {\bm{q}} -\bm q, \varepsilon_h^{\overline {\bm q}})_{\mathcal{T}_h} + (\Pi_W u_t - u_t,\varepsilon_h^{\overline u})_{\mathcal T_h}\\
	&\le C\|\bm\Pi_V {\bm{q}} -\bm q\|_{\mathcal T_h}^2 + \frac 1 2 	\|\varepsilon_h^{\overline {\bm q}}\|^2_{\mathcal{T}_h} + 4C\| \Pi_W u_t - u_t\|_{\mathcal T_h}^2 +\frac 1 {4C} \|\varepsilon_h^{\overline u}\|_{\mathcal T_h}^2.
	\end{align*}
	This implies
	\begin{align}\label{energy_q_appdenix}
	\|\varepsilon_h^{\overline{\bm q}}\|^2_{\mathcal{T}_h}+\langle \tau(\varepsilon_h^{\overline u} -\varepsilon_h^{\widehat{ \overline u}}),  \varepsilon_h^{\overline u} -\varepsilon_h^{\widehat{ \overline u}} \rangle_{\partial{\mathcal{T}_h}} 
	\le 2C\|\bm\Pi_V {\bm{q}} -\bm q\|_{\mathcal T_h}^2 + 8C\| \Pi_W u_t - u_t\|_{\mathcal T_h}^2 +\frac 1 {2C} \|\varepsilon_h^{\overline u}\|_{\mathcal T_h}^2.
	\end{align}
	Next, use $h\le 1$ and substitute \eqref{energy_q_appdenix} into \eqref{errir_Rew} to yield the result.
\end{proof}

Following the same steps, we obtain the following result:
\begin{lemma}\label{dual_ar2}
	We have
	\begin{align*}
	\|\partial_t(\Pi_W u - \overline u_h)\|_{\mathcal{T}_h} &\le  Ch^{\min\{k,1\}} (\|\bm q_t - \bm\Pi_V \bm q_t\|_{\mathcal T_h} +\|u_{tt} - \Pi_W u_{tt}\|_{\mathcal T_h}).
	\end{align*}
\end{lemma}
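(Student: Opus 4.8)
The plan is to mimic the proof of \Cref{dual_ar}, applied to the time-differentiated error equation. First I would record that the projections $\bm\Pi_V$, $\Pi_W$, and $P_M$ all commute with $\partial_t$ (this fact is already invoked for $\Pi_W$ in deriving \eqref{HDGO_3}), so that the time derivatives of the static errors of \Cref{error_u1_appendix} satisfy $\partial_t\varepsilon_h^{\overline{\bm q}} = \bm\Pi_V\bm q_t - \partial_t\overline{\bm q}_h$, $\partial_t\varepsilon_h^{\overline u} = \Pi_W u_t - \partial_t\overline u_h = \partial_t(\Pi_W u - \overline u_h)$, and $\partial_t\varepsilon_h^{\widehat{\overline u}} = P_M u_t - \partial_t\widehat{\overline u}_h$. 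In particular, the quantity to be estimated is precisely $\|\partial_t\varepsilon_h^{\overline u}\|_{\mathcal T_h}$, and the needed time derivatives $(\partial_t\overline{\bm q}_h,\partial_t\overline u_h,\partial_t\widehat{\overline u}_h)$ exist and solve \eqref{HDGO_3}.

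Next I would differentiate the error equation \eqref{error_u11appendix} in time. Since $\mathscr B$ is bilinear, the test functions $(\bm r_h, v_h, \widehat v_h)$ are independent of $t$, and the projections commute with $\partial_t$, differentiation yields the identity
\[
\mathscr B(\partial_t\varepsilon_h^{\overline{\bm q}}, \partial_t\varepsilon_h^{\overline u}, \partial_t\varepsilon_h^{\widehat{\overline u}}; \bm r_h, v_h, \widehat v_h) = (\bm\Pi_V\bm q_t - \bm q_t, \bm r_h)_{\mathcal T_h} + (\Pi_W u_{tt} - u_{tt}, v_h)_{\mathcal T_h}.
\]
This is exactly \eqref{error_u11appendix} with the data $(\bm q, u_t)$ replaced by $(\bm q_t, u_{tt})$; one can check the same identity by directly subtracting \eqref{HDGO_3} from the time-differentiated PDE, where the reaction contribution cancels because the steady-state problem is built from the exact $F(u)$ whose time derivative $F'(u)u_t$ matches that of the PDE.

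With this differentiated error equation in hand, I would repeat the two-part argument of \Cref{dual_ar} verbatim. The duality step takes $\Theta = \partial_t\varepsilon_h^{\overline u}$ in the dual problem \eqref{Dual_PDE1}, tests with $(\bm r_h, v_h, \widehat v_h) = (-\bm\Pi_V\bm\Phi, \Pi_W\Psi, P_M\Psi)$, and uses the projection approximation properties together with the regularity estimate \eqref{regularity_PDE} to produce the analog of \eqref{errir_Rew},
\[
\|\partial_t\varepsilon_h^{\overline u}\|_{\mathcal T_h}^2 \le Ch^2\|\partial_t\varepsilon_h^{\overline{\bm q}}\|_{\mathcal T_h}^2 + Ch^{\min\{2k,2\}}\big(\|\bm q_t - \bm\Pi_V\bm q_t\|_{\mathcal T_h}^2 + \|u_{tt} - \Pi_W u_{tt}\|_{\mathcal T_h}^2\big).
\]
The accompanying energy step takes $(\bm r_h, v_h, \widehat v_h) = (\partial_t\varepsilon_h^{\overline{\bm q}}, \partial_t\varepsilon_h^{\overline u}, \partial_t\varepsilon_h^{\widehat{\overline u}})$ in the differentiated error equation and, after Cauchy--Schwarz and Young's inequality, gives the analog of \eqref{energy_q_appdenix}, namely a bound of $\|\partial_t\varepsilon_h^{\overline{\bm q}}\|_{\mathcal T_h}^2$ by $C(\|\bm\Pi_V\bm q_t - \bm q_t\|_{\mathcal T_h}^2 + \|\Pi_W u_{tt} - u_{tt}\|_{\mathcal T_h}^2) + \frac{1}{2C}\|\partial_t\varepsilon_h^{\overline u}\|_{\mathcal T_h}^2$. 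Substituting this flux bound into the duality estimate and using $h \le 1$ to absorb the $\|\partial_t\varepsilon_h^{\overline u}\|_{\mathcal T_h}$ contribution yields the claimed estimate, since $\partial_t\varepsilon_h^{\overline u} = \partial_t(\Pi_W u - \overline u_h)$.

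Because the entire argument is a transcription of \Cref{dual_ar} with the data shifted by one time derivative, I do not anticipate a genuine obstacle. The only point demanding care is justifying the time differentiation itself: verifying that $\bm\Pi_V$, $\Pi_W$, $P_M$ commute with $\partial_t$ and that the exact solution and data are smooth enough in $t$ for \eqref{HDGO_3} and the differentiated error equation to hold. Under the standing smoothness hypotheses of \Cref{Error_analysis}, this is routine.
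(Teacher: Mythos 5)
Your proposal is correct and is exactly what the paper intends: the paper's entire proof of \Cref{dual_ar2} is the single remark ``Following the same steps, we obtain the following result,'' meaning one differentiates the error equation \eqref{error_u11appendix} in time and reruns the duality and energy arguments of \Cref{dual_ar} with $(\bm q, u_t)$ replaced by $(\bm q_t, u_{tt})$. Your write-up fills in precisely those steps, including the one point genuinely worth flagging (commutation of the projections with $\partial_t$ and the existence of \eqref{HDGO_3}), so there is nothing to correct.
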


\section*{Acknowledgements}
G.\ Chen thanks Missouri University of Science and Technology for hosting him as a visiting scholar; some of this work was completed during his research visit. J.\ Singler and Y.\ Zhang were supported in part by National Science Foundation grant DMS-1217122. J.\ Singler and Y.\ Zhang thank the IMA for funding research visits, during which some of this work was completed.

\bibliographystyle{plain}
\bibliography{yangwen_ref_papers}

\begin{thebibliography}{10}

\bibitem{MR2373954}
Susanne~C. Brenner and L.~Ridgway Scott.
\newblock {\em The mathematical theory of finite element methods}, volume~15 of
  {\em Texts in Applied Mathematics}.
\newblock Springer, New York, third edition, 2008.

\bibitem{MR3626531}
Aycil Cesmelioglu, Bernardo Cockburn, and Weifeng Qiu.
\newblock Analysis of a hybridizable discontinuous {G}alerkin method for the
  steady-state incompressible {N}avier-{S}tokes equations.
\newblock {\em Math. Comp.}, 86(306):1643--1670, 2017.

\bibitem{ChabaudCockburn12}
Brandon Chabaud and Bernardo Cockburn.
\newblock Uniform-in-time superconvergence of {HDG} methods for the heat
  equation.
\newblock {\em Math. Comp.}, 81(277):107--129, 2012.

\bibitem{MR1030644}
Chuan~Miao Chen, Stig Larsson, and Nai~Ying Zhang.
\newblock Error estimates of optimal order for finite element methods with
  interpolated coefficients for the nonlinear heat equation.
\newblock {\em IMA J. Numer. Anal.}, 9(4):507--524, 1989.

\bibitem{ChenCockburnSinglerZhang2}
G.~Chen, B.~Cockburn, J.~R. Singler, and Y.~Zhang.
\newblock Superconvergent {G}roup {HDG} methods for reaction diffusion
  equations {II}: {HHO}-inspired methods.
\newblock In preparation.

\bibitem{MR1172090}
Zhangxin Chen and Jim Douglas, Jr.
\newblock Approximation of coefficients in hybrid and mixed methods for
  nonlinear parabolic problems.
\newblock {\em Mat. Apl. Comput.}, 10(2):137--160, 1991.

\bibitem{MR641309}
I.~Christie, D.~F. Griffiths, A.~R. Mitchell, and J.~M. Sanz-Serna.
\newblock Product approximation for nonlinear problems in the finite element
  method.
\newblock {\em IMA J. Numer. Anal.}, 1(3):253--266, 1981.

\bibitem{CockburnSinglerZhang1}
B.~Cockburn, J.~R. Singler, and Y.~Zhang.
\newblock Interpolatory {HDG} {M}ethods for {P}arabolic {S}emilinear {PDE}s.
\newblock {\em J. Sci. Comput}.
\newblock In revision.

\bibitem{MR3507267}
Bernardo Cockburn, Daniele~A. Di~Pietro, and Alexandre Ern.
\newblock Bridging the hybrid high-order and hybridizable discontinuous
  {G}alerkin methods.
\newblock {\em ESAIM Math. Model. Numer. Anal.}, 50(3):635--650, 2016.

\bibitem{CockburnFuM2D17}
Bernardo Cockburn and Guosheng Fu.
\newblock Superconvergence by {$M$}-decompositions. {P}art {II}: {C}onstruction
  of two-dimensional finite elements.
\newblock {\em ESAIM Math. Model. Numer. Anal.}, 51(1):165--186, 2017.

\bibitem{CockburnFuM3D17}
Bernardo Cockburn and Guosheng Fu.
\newblock Superconvergence by {$M$}-decompositions. {P}art {III}:
  {C}onstruction of three-dimensional finite elements.
\newblock {\em ESAIM Math. Model. Numer. Anal.}, 51(1):365--398, 2017.

\bibitem{CockburnFuSayasM17}
Bernardo Cockburn, Guosheng Fu, and Francisco-Javier. Sayas.
\newblock Superconvergence by {$M$}-decompositions. {P}art {I}: {G}eneral
  theory for {HDG} methods for diffusion.
\newblock {\em Math. Comp.}, 86(306):1609--1641, 2017.

\bibitem{MR2485455}
Bernardo Cockburn, Jayadeep Gopalakrishnan, and Raytcho Lazarov.
\newblock Unified hybridization of discontinuous {G}alerkin, mixed, and
  continuous {G}alerkin methods for second order elliptic problems.
\newblock {\em SIAM J. Numer. Anal.}, 47(2):1319--1365, 2009.

\bibitem{MR2772094}
Bernardo Cockburn, Jayadeep Gopalakrishnan, Ngoc~Cuong Nguyen, Jaume Peraire,
  and Francisco-Javier Sayas.
\newblock Analysis of {HDG} methods for {S}tokes flow.
\newblock {\em Math. Comp.}, 80(274):723--760, 2011.

\bibitem{MR2629996}
Bernardo Cockburn, Jayadeep Gopalakrishnan, and Francisco-Javier Sayas.
\newblock A projection-based error analysis of {HDG} methods.
\newblock {\em Math. Comp.}, 79(271):1351--1367, 2010.

\bibitem{MR3463051}
Bernardo Cockburn and Jiguang Shen.
\newblock A hybridizable discontinuous {G}alerkin method for the
  {$p$}-{L}aplacian.
\newblock {\em SIAM J. Sci. Comput.}, 38(1):A545--A566, 2016.

\bibitem{MR2587427}
Benjamin~T. Dickinson and John~R. Singler.
\newblock Nonlinear model reduction using group proper orthogonal
  decomposition.
\newblock {\em Int. J. Numer. Anal. Model.}, 7(2):356--372, 2010.

\bibitem{MR0502033}
Jim Douglas, Jr. and Todd Dupont.
\newblock The effect of interpolating the coefficients in nonlinear parabolic
  {G}alerkin procedures.
\newblock {\em Math. Comput.}, 20(130):360--389, 1975.

\bibitem{MR702221}
C.~A.~J. Fletcher.
\newblock The group finite element formulation.
\newblock {\em Comput. Methods Appl. Mech. Engrg.}, 37(2):225--244, 1983.

\bibitem{MR798845}
C.~A.~J. Fletcher.
\newblock Time-splitting and the group finite element formulation.
\newblock In {\em Computational techniques and applications: {CTAC}-83
  ({S}ydney, 1983)}, pages 517--532. North-Holland, Amsterdam, 1984.

\bibitem{KabariaLewCockburn15}
Hardik Kabaria, Adrian~J. Lew, and B.~Cockburn.
\newblock A hybridizable discontinuous {G}alerkin formulation for non-linear
  elasticity.
\newblock {\em Comput. Methods Appl. Mech. Engrg.}, 283:303--329, 2015.

\bibitem{MR3178584}
Dongho Kim, Eun-Jae Park, and Boyoon Seo.
\newblock Two-scale product approximation for semilinear parabolic problems in
  mixed methods.
\newblock {\em J. Korean Math. Soc.}, 51(2):267--288, 2014.

\bibitem{MR973559}
Stig Larsson, Vidar Thom\'ee, and Nai~Ying Zhang.
\newblock Interpolation of coefficients and transformation of the dependent
  variable in finite element methods for the nonlinear heat equation.
\newblock {\em Math. Methods Appl. Sci.}, 11(1):105--124, 1989.

\bibitem{MR967844}
J.~C. L\'opez~Marcos and J.~M. Sanz-Serna.
\newblock Stability and convergence in numerical analysis. {III}. {L}inear
  investigation of nonlinear stability.
\newblock {\em IMA J. Numer. Anal.}, 8(1):71--84, 1988.

\bibitem{MoroNguyenPeraireSCL12}
D.~Moro, N.-C. Nguyen, and J.~Peraire.
\newblock A hybridized discontinuous {P}etrov-{G}alerkin scheme for scalar
  conservation laws.
\newblock {\em Internat. J. Numer. Methods Engrg.}, 91:950--970, 2012.

\bibitem{NguyenPeraireCM12}
N.~C. Nguyen and J.~Peraire.
\newblock Hybridizable discontinuous {G}alerkin methods for partial
  differential equations in continuum mechanics.
\newblock {\em J. Comput. Phys.}, 231:5955--5988, 2012.

\bibitem{MR2513831}
N.~C. Nguyen, J.~Peraire, and B.~Cockburn.
\newblock An implicit high-order hybridizable discontinuous {G}alerkin method
  for linear convection-diffusion equations.
\newblock {\em J. Comput. Phys.}, 228(9):3232--3254, 2009.

\bibitem{MR2558780}
N.~C. Nguyen, J.~Peraire, and B.~Cockburn.
\newblock An implicit high-order hybridizable discontinuous {G}alerkin method
  for nonlinear convection-diffusion equations.
\newblock {\em J. Comput. Phys.}, 228(23):8841--8855, 2009.

\bibitem{NguyenPeraireCockburnHDGAIAAINS10}
N.~C. Nguyen, J.~Peraire, and B.~Cockburn.
\newblock A hybridizable discontinuous {G}alerkin method for the incompressible
  {N}avier-{S}tokes equations ({AIAA P}aper 2010-362).
\newblock In {\em Proceedings of the 48th AIAA Aerospace Sciences Meeting and
  Exhibit}, Orlando, Florida, January 2010.

\bibitem{NguyenPeraireCockburnEDG15}
N.~C. Nguyen, J.~Peraire, and B.~Cockburn.
\newblock A class of embedded discontinuous {G}alerkin methods for
  computational fluid dynamics.
\newblock {\em J. Comput. Phys.}, 302:674--692, 2015.

\bibitem{PeraireNguyenCockburnHDGAIAACNS10}
J.~Peraire, N.~C. Nguyen, and B.~Cockburn.
\newblock A hybridizable discontinuous {G}alerkin method for the compressible
  {E}uler and {N}avier-{S}tokes equations ({AIAA P}aper 2010-363).
\newblock In {\em Proceedings of the 48th AIAA Aerospace Sciences Meeting and
  Exhibit}, Orlando, Florida, January 2010.

\bibitem{MR731213}
J.~M. Sanz-Serna and L.~Abia.
\newblock Interpolation of the coefficients in nonlinear elliptic {G}alerkin
  procedures.
\newblock {\em SIAM J. Numer. Anal.}, 21(1):77--83, 1984.

\bibitem{MR1086845}
Rolf Stenberg.
\newblock Postprocessing schemes for some mixed finite elements.
\newblock {\em RAIRO Mod\'el. Math. Anal. Num\'er.}, 25(1):151--167, 1991.

\bibitem{MR1068202}
Yves Tourigny.
\newblock Product approximation for nonlinear {K}lein-{G}ordon equations.
\newblock {\em IMA J. Numer. Anal.}, 10(3):449--462, 1990.

\bibitem{MR2752869}
Cheng Wang.
\newblock Convergence of the interpolated coefficient finite element method for
  the two-dimensional elliptic sine-{G}ordon equations.
\newblock {\em Numer. Methods Partial Differential Equations}, 27(2):387--398,
  2011.

\bibitem{MR3403707}
Zhu Wang.
\newblock Nonlinear model reduction based on the finite element method with
  interpolated coefficients: semilinear parabolic equations.
\newblock {\em Numer. Methods Partial Differential Equations},
  31(6):1713--1741, 2015.

\bibitem{MR0351124}
Mary~Fanett Wheeler.
\newblock A priori {$L_{2}$} error estimates for {G}alerkin approximations to
  parabolic partial differential equations.
\newblock {\em SIAM J. Numer. Anal.}, 10:723--759, 1973.

\bibitem{MR2112661}
Ziqing Xie and Chuanmiao Chen.
\newblock The interpolated coefficient {FEM} and its application in computing
  the multiple solutions of semilinear elliptic problems.
\newblock {\em Int. J. Numer. Anal. Model.}, 2(1):97--106, 2005.

\bibitem{MR2273051}
Zhiguang Xiong and Chuanmiao Chen.
\newblock Superconvergence of rectangular finite element with interpolated
  coefficients for semilinear elliptic problem.
\newblock {\em Appl. Math. Comput.}, 181(2):1577--1584, 2006.

\bibitem{MR2294957}
Zhiguang Xiong and Chuanmiao Chen.
\newblock Superconvergence of triangular quadratic finite element with
  interpolated coefficients for semilinear parabolic equation.
\newblock {\em Appl. Math. Comput.}, 184(2):901--907, 2007.

\bibitem{MR2391691}
Zhiguang Xiong, Yanping Chen, and Yan Zhang.
\newblock Convergence of {FEM} with interpolated coefficients for semilinear
  hyperbolic equation.
\newblock {\em J. Comput. Appl. Math.}, 214(1):313--317, 2008.

\bibitem{MR2511741}
Jianfeng Zhu, Yong-Tao Zhang, Stuart~A. Newman, and Mark Alber.
\newblock Application of discontinuous {G}alerkin methods for
  reaction-diffusion systems in developmental biology.
\newblock {\em J. Sci. Comput.}, 40(1-3):391--418, 2009.

\end{thebibliography}

\end{document}